\def\sqr#1#2{{\vcenter{\hrule height.#2pt
        \hbox{\vrule width.#2pt height#1pt \kern#1pt
                \vrule width.#2pt}
        \hrule height.#2pt}}}
\def\tratto{\mbox{\rule{2mm}{.2mm}$\;\!$}}
\newtheorem{Theorem}{Theorem}[section]
\newtheorem{Discussion}[Theorem]{Discussion}
\newtheorem{Corollary}[Theorem]{Corollary}
\newtheorem{Proposition}[Theorem]{Proposition}
\newtheorem{Notation and Discussion}[Theorem]{Notation and Discussion}
\newtheorem{Remark}[Theorem]{Remark}
\def\m{\mathfrak m}
\def\a{\mathfrak a}
\def\w{\widetilde}
\def\tha{ \, -\!\!\!\twoheadrightarrow}
\def\bs{\bigskip}
\def\ms{\medskip}
\def\ss{\smallskip}
\def\F{ F}
\def\G{ G}
\def\lto{\longrightarrow}
\begin{document}

\baselineskip=16pt

\title[Iterated socles and integral dependence in regular rings]
{\Large\bf Iterated socles and integral dependence \\ in regular rings}

\author[A. Corso, 
S. Goto, 
C. Huneke, C. Polini, and B. Ulrich]
{Alberto Corso, 
Shiro Goto, 
Craig Huneke, Claudia Polini, \and Bernd Ulrich}
\address{Department of Mathematics, University of Kentucky,
Lexington, KY 40506, USA} \email{alberto.corso@uky.edu}

\address{Department of Mathematics, School of Science and Technology, Meiji University, 1-1-1 Higashi-mita, 
\newline \mbox{\,\quad} Tama-ku, Kawasaki 214-8571, Japan}
\email{goto@math.meiji.ac.jp}

\address{Department of Mathematics, University of Virginia,
Charlottesville, VA 22904, USA} \email{huneke@virginia.edu}

\address{Department of Mathematics, University of Notre Dame,
Notre Dame, IN 46556, USA} \email{cpolini@nd.edu}

\address{Department of Mathematics, Purdue University,
West Lafayette, IN 47907, USA} \email{ulrich@math.purdue.edu}

\subjclass[2000]{Primary 13B22, 13D07, 13H05; Secondary 13C10, 13C40, 13D02, 13E10, 13F25, 13H10, 13N15.}

\keywords{Socle of a local ring, Jacobian ideals, integral dependence of ideals, free resolutions, determinantal ideals}

\thanks{The third author was partially supported by NSF grant DMS-1259142.
The fourth author was partially supported by NSF grant DMS-1202685 and NSA grant H98230-12-1-0242.
The last author was partially supported by NSF grant DMS-1205002 and as a Simons Fellow.}

\begin{abstract}

Let $R$ be a formal power series ring over a field, with maximal ideal $\m$, and let $I$ be an ideal of 
$R$ such that $R/I$ is Artinian. We study the iterated socles of $I$, that is the ideals which are defined 
as the largest ideal $J$ with $J\m^s\subset I$ for a fixed positive integer $s$.
We are interested in these ideals in connection with the notion of integral dependence of ideals. 
In this article we show that the iterated socles
are integral over $I$, with reduction number one,  provided $s \leq \text{o}(I_1(\varphi_d))-1$, where $\text{o}(I_1(\varphi_d))$ 
is the order of the ideal of entries of the last map in a minimal free $R$-resolution of $R/I$. 
In characteristic zero, we also provide formulas for the generators of iterated socles whenever 
$s\leq \text{o}(I_1(\varphi_d))$. This result  generalizes previous 
work of Herzog, who gave formulas for the socle generators of any ${\mathfrak m}$-primary homogeneous ideal $I$ in terms 
of Jacobian determinants of the entries of the matrices in a minimal 
homogeneous free $R$-resolution of $R/I$.
Applications are given to iterated socles of determinantal ideals with generic height. 
In particular, we give surprisingly simple formulas for iterated socles of
height two ideals in a power series ring in two variables. 
These generators  are suitable determinants obtained from the Hilbert-Burch matrix.
\end{abstract}

\maketitle

\section{Introduction}

The {\it socle} of an Artinian  local ring $(A, {\mathfrak m}_A)$  is the annihilator of the maximal ideal, 
$0 :_A {\mathfrak m}_A$. It is the unique largest submodule which has the structure of a module over 
the residue field $k=A/{\mathfrak m}_A$. When $A$
is equicharacteristic, we can write $A = R/I$, where $R=k[\![x_1, \ldots, x_d]\!]$ is a formal powers series ring over a field $k$.
We  pull back the socle to the  ideal $I :_R {\mathfrak m}_R$ in $R$, and by abuse of language call this ideal the socle of $I$. 
Socle generators of modules are as important as the minimal generators of the module, to which they are (in some sense) 
dual, but, in general,  they are much harder to find.

\medskip

The computation of the socle is well understood in the complete intersection case: 
If $I$ is generated by $d$ powers series $f_1, \ldots, f_d$ contained 
in the maximal ideal ${\mathfrak m}_R=(x_1, \ldots, x_d)$, standard linkage theory gives that  
$I :_R {\mathfrak m}_R = (I, \det C)$, 
where $C$ is a square transition matrix that writes the $f$'s in terms of the $x$'s.
Moreover, if the characteristic of the field $k$ is 0 and the $f$'s are homogeneous polynomials, 
one can take as $C$ the Jacobian matrix of the $f$'s, by Euler's formula. 
Hence the socle is generated by $I$ together with the determinant of the 
Jacobian matrix, $I :_R {\mathfrak m}_R = (I, |\partial f_i/\partial x_j|)$.
The same  formula holds in the non-graded case, although the result is much less obvious \cite{SS}.
The converse also holds. Namely, the socle of the Artinian algebra $A$ is its Jacobian ideal, 
$0 :_A {\mathfrak m}_A ={\rm Jac}(A)$,  if and only if $A$ is a complete intersection \cite{SS}. 
In other words, only for complete intersections 
one can expect a simple formula only involving derivatives of the generators of the ideal.

In the complete intersection case the generators of the ideal immediately give the matrices in a  free 
$R$-resolution of $R/I$, by means of the Koszul complex. 
% If $I$ is generated by a regular sequence (that is, $A$ is a complete intersection), then the free 
% resolution of $R/I$ has maps represented by matrices
% whose entries are elements in the regular sequence (up to sign) or $0$.  
Thus a natural generalization is to ask whether, 
in general, one can obtain formulas for the socle using derivatives of the entries of the matrices in the entire free resolution. 
If $I$ is a homogeneous ideal of a power series ring over a field of characteristic $0$, Herzog  \cite{He} 
gave such a formula for the socle generators by means of Jacobian 
determinants of the entries of the matrices in a homogeneous minimal resolution. 
These formulas suffice to deduce, for instance, 
that if $I$ is an ideal of maximal minors then the socle is contained in the ideal of next lower minors. 
This gives rather strong restrictions on where the socle can sit.
Herzog's result also has a recent application in the study of Golod ideals \cite{HH}.

% {\bf Maybe we could talk here about \cite{ALJ} and say that sometimes they compute the socles too with their method. 
% But that we have conterexamples in general.}

\medskip

{\it Iterated}  (or {\it quasi}) {\it socles} are simply socles of socles. After $s$ iterations one obtains an ideal 
which can be more easily described as $I :_R {\mathfrak m}_R^s$.  If $I$ is a homogeneous ideal in a polynomial ring $S$ and $X$ 
denotes ${\rm Proj}(S/I)$, then the largest ideal defining the same projective variety is the saturation of $I$, which is 
 an iterated socle. 
It is of great interest to understand the difference between $I$ and its saturation. For example, if a projective variety is defined by
the saturated ideal $I$,  a hyperplane section will be defined by the ideal of $I$ together with the linear form corresponding to the
hyperplane, but this ideal will not in general be saturated itself. We can apply our results
to give precise formulas for the saturated ideals defining hyperplane sections of projective varieties of low 
Castelnuovo-Mumford regularity, see Remark \ref{Hyperplane}.  Iterated socles also appear in the study of the scheme of Gorenstein Artin algebras, 
{\bf Gor}$(T)$, having fixed Hilbert function $T$. In particular, Iarrobino makes use  of what he calls Loewy filtrations which 
are defined by means of subquotients of iterated socles \cite{Iarrobino_1986, Iarrobino_1994}. Our work also leads us to define 
what we feel is a powerful concept, {\it distance}, which can be used as an effective replacement of Castelnuovo-Mumford 
regularity in the local case. We explore this notion in \cite{CHPU}, where we apply it to characterize the Cohen-Macaulayness 
and Gorensteinness of associated graded rings and to explore a conjecture on Loewy lengths of Avramov, Buchweitz, Iyengar, 
and Miller \cite{ABIM}.

\medskip

In this paper we study iterated socles from several perspectives. Many natural
questions arise. For instance, Herzog's socle formula is extremely valuable. Are there similar explicit  formulas for iterated socles?
Another problem is related to the study of integral closures of  ideals. Integral closure plays a crucial role, for instance, in the study of 
Hilbert functions, in intersection theory, and in equisingularity theory.
Since it is difficult to compute the integral closure, one would like to find at least a large part of it.  An
obvious place to look for integral elements are iterated socles, which immediately leads to our main motivating question:

\smallskip

\centerline {\it For which values of $s$ is $I :_R {\mathfrak m}_R^s$ contained in the integral closure of $I$?}
\smallskip
\noindent Still another  problem is to relate iterated socles to other ideals  derived from $I$, when more is known 
about the structure of $I$. One example is the result of Herzog mentioned above. If $I$ is determinantal, its socle
lies in the ideal of next  lower size minors. In particular, iterated socles are contained in the ideal of yet lower size minors. Can more be
said? 

\medskip

We provide almost complete answers to these questions.

\medskip

One cannot expect a positive answer to our main motivating question if $s$ is too large. One obstruction for being in the integral 
closure  arises from the order of an ideal. Recall that the {\it order} of an ideal $I$ in a Noetherian local ring 
$(R, {\mathfrak m})$  is defined as
$\text{o}(I) = \sup \{ t \,|\, I \subset {\mathfrak m}^t \}$. If $R$ is regular, the powers of the maximal 
ideal are integrally closed, hence  $\text{o}(I)=\text{o}(\overline{I})$, where $\overline{I}$ denotes the integral closure of $I$. 
This means that passing  to the integral closure of an ideal cannot  lower the order, at least when $R$ is regular. 

There are several past results dealing with our main motivating question. The first result is 
due to Burch \cite{B}.  In the same paper where she proves the Hilbert-Burch theorem, she also shows that if 
$R$ is not regular and $I$ has finite projective dimension then the entire socle lies in the integral closure of $I$. 
Stronger results have been proved for complete intersection ideals, see for instance  \cite{CPV, CP1, CP2, PU, W, WY}. 
The result of Wang \cite{W} says that if $(R,{\mathfrak m})$ is a regular local ring of dimension $d\geq 2$ and $I$ 
is a complete intersection then $(I : {\mathfrak m}^s)^2=I(I : {\mathfrak m}^s)$
provided $s \leq \text{o}(I)-1$.  In other words, the iterated socle is not only integral over the ideal but also 
the reduction number is at most one.
Little is known for socles, or iterated socles, of ideals that are not complete intersections. 
The integral dependence (with reduction number one) of the socle of a Gorenstein ideal  contained in 
the square of the maximal ideal has been proved in  \cite{CHV}.  A connection between iterated socles and adjoints of ideals 
was addressed by Lipman \cite{L}.
 
\medskip

We now explain our results in more detail. In Section 2  we  unify and generalize  
these previous results about integral dependence with reduction number one. Most notably, we are able to eliminate 
the assumption that the ideal be a complete intersection.  The main result of Section 2 deals with iterated socles 
of any ideal $I$ in a power series ring $R$ in $d\geq 2$ variables over a field; we prove that the iterated socle is 
integral over $I$ with reduction number at most one as long as
% In Section~2 we show that the iterated socles of any ideal $I$ in a power series ring $R=k[\![ x_1, \ldots, x_d ]\!]$ 
% is integral  over $I$ with reduction number one whenever  
$s\leq\text{o}(I_1(\varphi_d))-1$, where $(F_{\bullet}, \varphi_{\bullet})$ is a minimal free $R$-resolution of $R/I$ 
(see Theorem~\ref{main_reduction_1}). Notice that if $R/I$ is Gorenstein then  $I_1(\varphi_d)=I$ 
by the self-duality of the resolution, and the above inequality for $s$ simply becomes the condition $s \leq \text{o}(I)-1$\,
required by Wang and other authors. Thus our result  recovers \cite{CHV,  CP1, CP2, CPV, PU, W} in the regular case 
and provides, at the same time, a vast generalization.  
A somewhat surprising feature of the proof is that the precise knowledge of the module structure of the quotient 
$I:_R{\mathfrak m}^{s+1}/I$ suffices to deduce the equality $(I:_R{\mathfrak m}^s)^2=I(I:_R{\mathfrak m}^s)$ back in the ring $R
$. As structural information we use the fact that this quotient is a  direct sum of copies of the canonical module 
$\omega_{R/{\mathfrak m}^{s+1}}$, which in turn embeds into the module of  polynomials in the inverse variables 
in the sense of Macaulay's inverse systems  (see Corollary \ref{canonical}).

\medskip

In Section~3 we generalize Herzog's formula from socles to iterated socles of not necessarily homogeneous ideals 
in a power series ring $R=k[\![ x_1, \ldots, x_d ]\!]$ over a field of characteristic zero.  More precisely, we provide formulas for 
the generators of iterated socles whenever $s\leq \text{o}(I_1(\varphi_d))$ (see Corollaries \ref{corollary1} and 
\ref{corollary2}, which are special cases of Theorems \ref{closed_formula} and \ref{closed_formula2}).  
Our results cannot be obtained by repeatedly applying Herzog's formulas because this would require knowing  
the resolution of all the intermediate iterated socles. 
Our proof reduces to computing cycles in the tensor complex of a minimal free $R$-resolution of $R/I$ and the 
Koszul complex built on suitable monomial complete intersections. The crucial ingredient is  
an ad hoc modification of the classical de Rham differential on this family of Koszul complexes.  Unlike in Herzog's case, 
our modified de Rham differential is a $k$-linear contracting homotopy which is not a derivation. 
It is simply a connection. Yet, this property suffices for our calculations to go through.

\medskip

In Section~4 we provide applications of the formulas obtained in Section~3 to iterated socles of determinantal ideals 
with generic height. In other words, we obtain strong restrictions on where iterated socles of ideals of  minors of matrices 
can sit (see Theorem~\ref{determinatal}). 
Similar results hold for ideals of minors of symmetric matrices and of Pfaffians. 
In particular, we give surprisingly simple formulas for the generators of  iterated socles of height two ideals in 
a power series ring in two variables. 
These generators  are suitable determinants obtained from the Hilbert-Burch matrix (see Theorem~\ref{height2}).

\medskip

In  a  subsequent article  \cite{CHPU} we consider  iterated socles of ideals in non-regular local rings. 
We obtain substantial improvements in this situation, as we are able to quantify the contribution that comes from 
the non-regularity of the ring.

\medskip

For unexplained terminology and background we refer the reader to \cite{BH}, \cite{SH}, and \cite{V}.

\bs
\bs

\section{Reduction number one}

In this section we  prove our main result on integral dependence, Theorem \ref{main_reduction_1}. 
We begin with an observation that will be used throughout the paper.

\begin{Proposition}\label{basic}
Let $R$ be a Noetherian local ring, $M$ a finite $R$-module, $N=R/J$ with $J$ a perfect $R$-ideal of 
grade $g$, and write
$-^{\vee} = {\rm Ext}_R^g(\tratto, R)$.
\begin{itemize}
\item[(a)]
There are natural isomorphisms \, $0 :_M J \cong {\rm Hom}_R(N,M) \cong {\rm Tor}^R_g(N^{\vee}, M)$.

\item[(b)]
Assume $M$ has a resolution $(F_{\bullet},\varphi_{\bullet})$ of length $g$ by finite free $R$-modules. 
If $I_1(\varphi_g) \subset J$,  then there is a 
natural isomorphism
\[
 {\rm Tor}^R_g(N^{\vee}, M) \cong N^{\vee} \otimes_R F_g\, .  
\]
\end{itemize}
\end{Proposition}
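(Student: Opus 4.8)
The plan is to prove both parts by tracking the functor $-^{\vee}=\mathrm{Ext}^g_R(\tratto,R)$ through the relevant complexes. For part (a), I would start from the standard adjunction-free identity $0:_M J=\mathrm{Hom}_R(R/J,M)=\mathrm{Hom}_R(N,M)$, which is immediate from the definitions. The substance is the isomorphism $\mathrm{Hom}_R(N,M)\cong\mathrm{Tor}^R_g(N^{\vee},M)$. Here I would use that $J$ is perfect of grade $g$, so $N=R/J$ has a finite free resolution $(G_{\bullet},\psi_{\bullet})$ of length exactly $g$, and $N^{\vee}=\mathrm{Ext}^g_R(N,R)=\mathrm{coker}(\psi_g^{*})$ is the unique nonzero cohomology of the dual complex $G_{\bullet}^{*}$. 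Since $N$ is perfect, $G_{\bullet}^{*}$ (suitably reindexed) is in fact a free resolution of $N^{\vee}$ — this is the familiar fact that the dual of a resolution of a perfect module resolves its canonical dual. Then $\mathrm{Tor}^R_g(N^{\vee},M)$ can be computed from $G_{\bullet}^{*}\otimes_R M$, i.e. from $\mathrm{Hom}_R(G_{\bullet},R)\otimes_R M\cong\mathrm{Hom}_R(G_{\bullet},M)$ (the last isomorphism because the $G_i$ are finite free). The homology of $\mathrm{Hom}_R(G_{\bullet},M)$ in the top spot $g$ is exactly $\mathrm{Hom}_R(N,M)$, since $G_{\bullet}\to N$ is a resolution and $\mathrm{Hom}_R(-,M)$ is left exact. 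Matching the grading conventions gives the desired natural isomorphism.

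For part (b), the idea is to compute $\mathrm{Tor}^R_g(N^{\vee},M)$ using the resolution of $M$ instead, namely $(F_{\bullet},\varphi_{\bullet})$, which by hypothesis has length $g$. Thus $\mathrm{Tor}^R_g(N^{\vee},M)$ is the top homology $H_g(N^{\vee}\otimes_R F_{\bullet})$, i.e. the kernel of $N^{\vee}\otimes\varphi_g\colon N^{\vee}\otimes_R F_g\to N^{\vee}\otimes_R F_{g-1}$. The claim is that this kernel is all of $N^{\vee}\otimes_R F_g$, which is exactly the assertion that $N^{\vee}\otimes\varphi_g=0$. This is where the hypothesis $I_1(\varphi_g)\subset J$ enters: the entries of the matrix $\varphi_g$ all lie in $J$, and $J$ annihilates $N^{\vee}$ (since $N^{\vee}$ is a module over $N=R/J$), so every entry acts as zero on $N^{\vee}\otimes_R F_g$. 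Hence $N^{\vee}\otimes\varphi_g=0$, giving $\mathrm{Tor}^R_g(N^{\vee},M)\cong N^{\vee}\otimes_R F_g$. I should check naturality and that $J$ does annihilate $N^{\vee}$: the latter holds because $N^{\vee}=\mathrm{Ext}^g_R(N,R)$ is computed from a complex of $R/J$-... more carefully, $N^{\vee}$ is a subquotient of $\mathrm{Hom}$-modules built from $N$, hence an $N$-module; alternatively $J\cdot N=0$ forces $J\cdot\mathrm{Ext}^g_R(N,R)=0$ by functoriality of $\mathrm{Ext}$ in the first variable.

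The main obstacle I anticipate is bookkeeping rather than conceptual: pinning down the grading/reindexing conventions so that "the top homology" of the dualized resolution really is $N^{\vee}$ and not a shift of it, and verifying that the composite isomorphisms in (a) are natural in $M$ (and compatible with the one in (b)) rather than merely existing. The perfectness of $J$ is essential throughout — it is what guarantees the dual of a length-$g$ resolution of $N$ is again a resolution (of $N^{\vee}$), with no higher $\mathrm{Ext}$ obstructing the computation — and I would make sure to invoke it at exactly that point. Everything else is a routine diagram chase with finite free modules, for which the identifications $\mathrm{Hom}_R(F,R)\otimes_R M\cong\mathrm{Hom}_R(F,M)$ for $F$ finite free are the only technical inputs needed.
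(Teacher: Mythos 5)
Your proposal is correct and follows essentially the same route as the paper: for (a) you dualize a length-$g$ free resolution $G_{\bullet}$ of $N$, use perfectness to see that the (shifted) dual complex resolves $N^{\vee}$, and identify $G_{\bullet}^{*}\otimes_R M\cong \mathrm{Hom}_R(G_{\bullet},M)$ so that the top Tor is $\mathrm{Hom}_R(N,M)\cong 0:_M J$; for (b) you compute $\mathrm{Tor}^R_g(N^{\vee},M)$ from $F_{\bullet}$ and observe that $I_1(\varphi_g)\subset J$ forces $N^{\vee}\otimes\varphi_g=0$ since $J$ annihilates $N^{\vee}$. This matches the paper's proof, including the key fact that the dualized resolution of the perfect module $N$ resolves $N^{\vee}$.
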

\begin{proof}
We first prove (a). As $N=R/J$, there is a natural isomorphism \, $0 :_M J \cong {\rm Hom}_R(N,M) $. Since
$J$ is perfect of grade $g$, we also have
\[
{\rm Hom}_R(N,M)  \cong  {\rm Tor}_g^R(N^{\vee}, M).
 \]
Indeed, let $G_{\bullet}$ be a resolutions of $N$ of length $g$ by finite free 
$R$-modules. Notice  that   $ G^*_{\bullet}[-g]$ is a  resolution of  $N^{\vee}$ of length $g$ by finite free $R$-modules.
Hence we obtain natural isomorphisms 
\begin{eqnarray*}
{\rm Tor}_g^R(N^{\vee}, M) & \cong & H_g (G_{\bullet}^*[-g] \otimes_R M)\\
& \cong & {\rm Ker} ( G_0^* \otimes _R M \longrightarrow G_1^* \otimes_R M )\\
& \cong &   {\rm Ker} ( {\rm Hom}_R(G_0, M) \longrightarrow {\rm Hom}_R(G_1, M))\\
& \cong & {\rm Hom}_R(N,M) \, .
\end{eqnarray*}

As for part (b), notice that 
\[
{\rm Tor}_g^R(N^{\vee}, M)\  \cong  H_g(N^{\vee} \otimes_R F_{\bullet}) \cong N^{\vee} \otimes_R F_g \, ,
\]
where the last isomorphism holds because $N^{\vee} \otimes_R \varphi_{g+1} =0= N^{\vee} \otimes_R \varphi_g$
by our assumption on $I_1(\varphi_g)$. 
\end{proof}

\bs

The above proposition provides strong structural information about the iterated socle $0 :_M {\mathfrak m}^{s}$; 
it implies, under suitable hypotheses, that this colon is a direct sum of  copies of 
the canonical module of $R/{\mathfrak m}^s$.

\begin{Corollary}\label{canonical} Let $(R, {\mathfrak m})$ be a regular local ring of dimension $d$, $M$ a finite
$R$-module, and $(F_{\bullet}, \varphi_{\bullet})$ a minimal free $R$-resolution of $M$. One has
\[
0 :_M {\mathfrak m}^{s} \cong  \omega_{R/{\mathfrak m}^{s}} \otimes_R F_d, 
\]
for every $ s\le {\rm o}(I_1(\varphi_d))$.
\end{Corollary}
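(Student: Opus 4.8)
The plan is to apply Proposition~\ref{basic} to the module $M$ with $J=\m^s$, so that $N=R/\m^s$ and $g=d$. First I would check that $\m^s$ is a perfect $R$-ideal of grade~$d$. Since $R/\m^s$ has finite length it is Cohen--Macaulay of dimension zero; because $\sqrt{\m^s}=\m$ and $R$ is regular one has ${\rm grade}\,\m^s={\rm grade}\,\m=d$, while the Auslander--Buchsbaum formula gives $\pd_R(R/\m^s)=d-\depth(R/\m^s)=d$. Hence ${\rm grade}\,\m^s=\pd_R(R/\m^s)=d$, that is, $\m^s$ is perfect of grade~$d$. Writing $-^{\vee}={\rm Ext}^d_R(\tratto,R)$, part (a) of Proposition~\ref{basic} then produces natural isomorphisms
\[
0:_M\m^s\ \cong\ {\rm Hom}_R(R/\m^s,M)\ \cong\ {\rm Tor}^R_d\big((R/\m^s)^{\vee},M\big).
\]

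Next I would identify $(R/\m^s)^{\vee}$ with the canonical module $\omega_{R/\m^s}$. This is the standard description of the canonical module of a Cohen--Macaulay quotient of a Gorenstein ring: $R$ is regular, hence Gorenstein, and $R/\m^s$ is Cohen--Macaulay of codimension~$d$, so ${\rm Ext}^d_R(R/\m^s,R)\cong\omega_{R/\m^s}$ canonically (see \cite{BH}). The displayed chain of isomorphisms thus becomes $0:_M\m^s\cong{\rm Tor}^R_d(\omega_{R/\m^s},M)$.

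It remains to evaluate this Tor on the minimal free resolution $(F_{\bullet},\varphi_{\bullet})$ of $M$ via part (b) of Proposition~\ref{basic}. Since $R$ is regular of dimension $d$, a minimal free resolution of $M$ has length at most $d$, so I may regard $(F_{\bullet},\varphi_{\bullet})$ as a free resolution of length exactly $d$, with $F_d=0$ in case $\pd_R M<d$; this lets the argument run uniformly without splitting into cases. The one hypothesis of Proposition~\ref{basic}(b) that needs verification is $I_1(\varphi_d)\subset J=\m^s$, and this is precisely the role of the bound on $s$: by the very definition of the order, $I_1(\varphi_d)\subset\m^{{\rm o}(I_1(\varphi_d))}\subset\m^s$ whenever $s\le{\rm o}(I_1(\varphi_d))$. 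Proposition~\ref{basic}(b) then yields ${\rm Tor}^R_d(\omega_{R/\m^s},M)\cong\omega_{R/\m^s}\otimes_R F_d$, and composing with the isomorphisms above gives $0:_M\m^s\cong\omega_{R/\m^s}\otimes_R F_d$, as desired. I do not expect any genuinely hard step here; the only points demanding attention are the bookkeeping of grade, codimension, and projective dimension that makes both parts of Proposition~\ref{basic} applicable, and the (standard) identification of $(R/\m^s)^{\vee}$ with $\omega_{R/\m^s}$.
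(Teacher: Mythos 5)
Your proposal is correct and follows exactly the paper's route: the paper's proof is the one-line application of Proposition~\ref{basic} with $N=R/\m^s$, and your verifications (that $\m^s$ is perfect of grade $d$, that $(R/\m^s)^{\vee}\cong\omega_{R/\m^s}$ since $R$ is Gorenstein, and that $s\le{\rm o}(I_1(\varphi_d))$ gives $I_1(\varphi_d)\subset\m^s$) are precisely the details the paper leaves implicit.
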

\begin{proof}
We apply Proposition~\ref{basic} with $N=R/{\mathfrak m}^s$.
\end{proof}

\bs

In  Proposition~\ref{reduction1} below we formalize the key step in the proof of Theorem~\ref{main_reduction_1}. 

% We observe that the exchange property described below had previously been exploited in \cite[Lemma 3.6]{CHV} in the context of integrality
% of socles of Gorenstein ideals.

\begin{Proposition}\label{reduction1}
Let $R$ be a commutative ring, $I \subset K$ ideals, $x, y$ elements of $R$, and $W$ a subset of $R/I$ annihilated by some 
power of $x$ so that $xW=yW$ generates $K/I$. Assume that whenever $x^tyw=0$ in $R/I$ for some $t>0$ and $w \in W$, 
then $x^tw=0$ or $yw=0$ in $R/I$. Then
\[
K^2=IK.
\]
\end{Proposition}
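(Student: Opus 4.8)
The goal is to show $K^2 = IK$, i.e.\ that every product of two generators of $K$ lies in $IK$. Since $K/I$ is generated by the set $xW = yW$, it suffices to prove that for all $w_1, w_2 \in W$ we have $(xw_1)(yw_2) \in IK$; equivalently, working modulo $I$ in $R/I$, I want to show $xy\, w_1 w_2 = 0$ in $(R/I)\big/ (K/I)\cdot(R/I)$ — more precisely that $xyw_1w_2 \in I + K\cdot(\text{representatives})$. The cleanest formulation: it is enough to check that $xy\,w_1w_2$, viewed in $R/I$, lies in the submodule generated over $R/I$ by $K/I$, which is $(K/I)^2$-type information; so I reduce to showing $x y\, w_1 w_2 \equiv 0$ modulo $I K$, and I will actually show the stronger pointwise statement that either a high power of $x$ kills the relevant element or $y$ does.

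The main idea is a \emph{descending induction on the power of $x$}. Fix $w_1, w_2 \in W$ and let $n$ be large enough that $x^n w_1 = x^n w_2 = 0$ in $R/I$ (possible since $W$ is annihilated by a power of $x$). Consider the element $x y\, w_1 w_2 \in R/I$. Because $x^n w_1 = 0$, we have $x^{n} \cdot (y w_1 w_2) = 0$, so $x^{n-1} \cdot (x y w_1 w_2) = 0$. Now apply the hypothesis with $w = w_1$ (or the appropriate generator), $t = n-1$: from $x^{n-1} y w_1 \cdot (\text{something}) = 0$ we get that either $x^{n-1} w_1 \cdot(\cdots) = 0$ or $y w_1 \cdot(\cdots) = 0$. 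The dichotomy lets me either strip one factor of $x$ (lowering $t$, feeding the induction) or replace a factor $x w_i$ by $y w_i$ and land inside $I K$ directly, because $y w_2$ as a representative multiplied by the element $y w_1$ lies in $K \cdot K$ but the $y$'s let me absorb into $I$ via the annihilation. I will set the induction up so that the inductive hypothesis is: for all $w \in W$ and all $j \le t$, $x^j y\, w_1 w$ (suitably interpreted) lies in $IK$; the base case $t = 0$ says $y w_1 w \in IK$, which follows because $x w = y w$ generates $K/I$ forces $y w \in K$, and then $y w_1 \cdot y w$ — here I use that $y w_1 \in K$ and one of the two $y$-factors can be traded against $I$ using $x(yw) = x(xw)$, pushing a difference into $I$.

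The step I expect to be the genuine obstacle is bookkeeping the passage back to the ring $R$: the hypothesis is stated as an implication about elements of $R/I$, but the conclusion $K^2 = IK$ is an equality of ideals of $R$, and $W$ need not be closed under multiplication, so ``$w_1 w_2$'' is not literally an element of $W$ and the hypothesis does not apply to it verbatim. I will need to first reduce, using that $xW = yW$ \emph{generates} $K/I$ as a module, to checking the relation on products of generators, and then carefully invoke the hypothesis only for $w$ ranging over $W$ itself, peeling one generator at a time while carrying the other generator as a fixed ``coefficient'' from $K$. Making the induction statement strong enough to survive this peeling — i.e.\ choosing the right auxiliary module element to plug in for the ``$w$'' in the hypothesis at each stage — is the crux; once the statement is correctly phrased, each individual step is a one-line application of the dichotomy together with the identity $x(xw - yw) \in I$.
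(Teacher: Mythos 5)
There is a genuine gap at exactly the point you flag as ``the crux'', and your plan does not resolve it. The dichotomy hypothesis is an implication about a bare element $w\in W$: from $x^t y w=0$ in $R/I$ (with no extra factor) you may conclude $x^t w=0$ or $yw=0$. Your induction, however, only ever knows that a \emph{product} is killed --- e.g.\ $x^{n-1}\cdot(xy\,w_1w_2)=0$ --- and you then want to conclude ``either $x^{n-1}w_1\cdot(\cdots)=0$ or $yw_1\cdot(\cdots)=0$'', i.e.\ you invoke the hypothesis with the second generator carried along as a coefficient. Nothing licenses this: knowing $x^{t}yw_1\,c=0$ tells you nothing about $x^{t}yw_1$ itself, so the hypothesis cannot be applied, and a ``coefficient'' version of it is a strictly stronger assumption that is not what gets verified in the intended application (there $W$ corresponds to the set of monomials of degree $s$ in the inverse variables, tensored with a basis of $F_d$, and the dichotomy is checked only for those elements themselves). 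Your base case is also unsupported: you use ``the identity $x(xw-yw)\in I$'', but $xW=yW$ is an equality of \emph{sets} in $R/I$, not the elementwise relation $xw\equiv yw$; in the application $x_1M=x_2M$ holds only because both sides equal the set of monomials of degree $s-1$, while $x_1m\neq x_2m$ for each individual monomial $m$. So both the inductive step and the termination of your induction fail as described.

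The paper's proof avoids tracking annihilation of the product altogether. Take $v_1,v_2\in K$ representing two generators of $K/I$, and suppose $x^tv_1\equiv 0\bmod I$ with $t>0$ and $v_1\not\equiv 0\bmod I$. Because $xW=yW$, one can write $v_1\equiv yu_1$ and $v_2\equiv xu_2 \bmod I$ with $u_1,u_2$ representing elements $w_1,w_2$ of $W$; then modulo $IK$,
\[
v_1v_2\;\equiv\;(yu_1)(xu_2)\;=\;(xu_1)(yu_2).
\]
Now the hypothesis is applied to the single element $w_1\in W$: since $x^t y w_1$ is the class of $x^tv_1$, hence $0$, while $yw_1$ is the class of $v_1$, hence nonzero, the dichotomy forces $x^tw_1=0$, so the new first factor $xu_1$ (again a generator, as $xw_1\in xW=yW$) satisfies $x^{t-1}(xu_1)\equiv 0\bmod I$. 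Decreasing induction on $t$ terminates when the first factor is $\equiv 0\bmod I$, which puts the product in $IK$. The idea your proposal is missing is precisely this swap $(yu_1)(xu_2)=(xu_1)(yu_2)$: it converts the known $x$-annihilation of the generator $v_1=yu_1$ into annihilation of $w_1$ itself via the dichotomy, and it makes the induction run on the $x$-order of one factor only --- never on the product, about which the hypothesis says nothing.
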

\begin{proof}
Let $U, V$ be preimages in $R$ of $W$ and of $xW=yW$, respectively. We prove that if $v_1, v_2$ are in $V$ and $x^tv_1 \equiv 0 \mod I$
for some $t>0$, then $v_1v_2 \equiv v_1'v_2' \mod IK$ for $v_1', v_2'$ in $V$ with $x^{t-1}v_1' \equiv 0  \ {\rm mod} \,  I$. 
Decreasing induction on $t$ then shows that $v_1v_2 \equiv 0  \ {\rm mod} \,  IK$.

We may assume that $v_1 \not\equiv 0 \ {\rm mod} \, I$, and write $v_1 \equiv y u_1 \ {\rm mod} \,  I$, $v_2 \equiv xu_2 \ {\rm mod} \,  I$ for elements 
$u_1, u_2$ of $U$. Now
\begin{eqnarray*}
v_1 v_2 & \equiv & yu_1v_2 \mod IK \\
& \equiv & yu_1xu_2 \mod IK \\
& = & xu_1y u_2.
\end{eqnarray*}
Since $x^tyu_1 \equiv x^tv_1 \equiv 0\mod I$ and $yu_1 \equiv v_1\not\equiv 0 \mod I$, it follows that $x^t u_1 \equiv 0 \mod I$, 
hence $x^{t-1}(xu_1) \equiv 0 \mod I$. Now set $v_1' =xu_1$ and $v_2' =yu_2$.
\end{proof}

\bs

Theorem~\ref{main_reduction_1} below greatly generalizes the results of  \cite{CHV,  CP1, CP2, CPV, PU, W}  in the case 
of a regular ambient ring.  We consider iterated socles $I:\m^s$ of arbitrary ideals $I$ in an equicharacteristic regular local ring
$R$ of dimension $d \geq 2$,  and we prove
that the iterated socles are integral over $I$ with reduction number at most one as long as 
$s\leq\text{o}(I_1(\varphi_d))-1$, where $(F_{\bullet}, \varphi_{\bullet})$ is a minimal free $R$-resolution of $R/I$. This 
inequality replaces the assumption that $I$ be a complete intersection and $s\leq \text{o}(I)-1$ required in the earlier work,  
% $s\le {\rm o}(I_1(\varphi_d))-1$.
%as the ideal $I$ under consideration is any arbitrary ideal and not just a complete intersection. 
%n fact, if $I$ is a Gorenstein ideal one has  that 
%$I_1(\varphi_d)=I$, by the self-duality of the resolution. Thus a condition on the ideal of the entries of the last map of a minimal free resolution of 
%the ideal $I$ seems the right 
and hence appears to be the correct condition to fully understand and generalize  \cite{CHV,  CP1, CP2, CPV, PU, W} . 
The bound $s\leq\text{o}(I_1(\varphi_d))-1$ is sharp as can be seen by taking $I=\m$ and $s=1$.

\begin{Theorem}\label{main_reduction_1}
Let $(R, {\mathfrak m})$ be a regular local ring of dimension $d \geq 2$ containing a field,
%Let $R=k[\![ x_1, \ldots, x_d ]\!]$ be a power series ring in $d \geq 2$ variables over a field $k$, 
$I$ an $R$-ideal, and $(F_{\bullet}, \varphi_{\bullet})$ 
a minimal free $R$-resolution of $R/I$. One has
\[
(I : {\mathfrak m}^s)^2=I(I : {\mathfrak m}^s) 
%\qquad \mbox{for every} \ \  s\le {\rm o}(I_1(\varphi_d))-1\, .
\]
for every $ s\le {\rm o}(I_1(\varphi_d))-1$.
\end{Theorem}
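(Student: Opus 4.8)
The strategy is to deduce the theorem from Corollary~\ref{canonical} and Proposition~\ref{reduction1}. The structural description of $(I:\m^{s+1})/I$ coming from Corollary~\ref{canonical}, read through Macaulay's inverse systems, turns the hypotheses of Proposition~\ref{reduction1} into an elementary computation with monomials; the case to feed into that proposition is $K=I:\m^s$ with $x,y$ two suitably chosen coordinates.

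First I would clear away the degenerate cases. We may assume $s\ge 1$, since for $s=0$ the claim is $I^2=I\cdot I$. If $\depth R/I>0$, then $\pd_R R/I<d$ by the Auslander--Buchsbaum formula, so $F_d=0$, $I_1(\varphi_d)=(0)$, $\text{o}(I_1(\varphi_d))=\infty$, and Corollary~\ref{canonical} gives $(I:\m^s)/I=0:_{R/I}\m^s\cong\omega_{R/\m^s}\otimes_R F_d=0$; thus $I:\m^s=I$ and there is nothing to prove. So assume $\pd_R R/I=d$, and write $F_d\cong R^r$ with $r\ge 1$. Since the colon $I:\m^s$ (the conductor $\m^s$ being finitely generated), products of ideals, the equality $K^2=IK$, minimal free resolutions, and the order function of an ideal all behave well under the faithfully flat base change $R\to\wh R$, I may replace $R$ by its completion and, by Cohen's structure theorem, assume $R=k[\![x_1,\dots,x_d]\!]$ with $k=R/\m$.

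Now fix $1\le s\le\text{o}(I_1(\varphi_d))-1$, so that $s+1\le\text{o}(I_1(\varphi_d))$ and Corollary~\ref{canonical} applies with exponent $s+1$. Put $K=I:\m^s$, $\bar K=K/I=0:_{R/I}\m^s$, and $\bar L=(I:\m^{s+1})/I=0:_{R/I}\m^{s+1}$; note $\bar K\subseteq\bar L$ and $\bar K=0:_{\bar L}\m^s$. By Corollary~\ref{canonical} there is an isomorphism of $R$-modules $\bar L\cong\omega_{R/\m^{s+1}}^{\oplus r}$, and $\omega_{R/\m^{s+1}}$ is the Matlis dual ${\rm Hom}_k(R/\m^{s+1},k)$, with $k$-basis the dual monomials $\eta^\beta$ $(|\beta|\le s)$ on which each $x_i$ acts by contraction: $x_i\cdot\eta^\beta=\eta^{\beta-e_i}$ if $\beta_i\ge 1$, and $x_i\cdot\eta^\beta=0$ otherwise. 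I would then take $W\subseteq\bar L$ to be the finite set consisting, in each of the $r$ copies of $\omega_{R/\m^{s+1}}$, of the top dual monomials $\{\,\eta^\beta : |\beta|=s\,\}$, and set $x=x_1$, $y=x_2$ (here $d\ge 2$ is used). Then $\m^{s+1}W=0$, hence $x_1^{s+1}W=0$; and, since $d\ge 2$, in each copy $x_1W=x_2W=\{\,\eta^\gamma:|\gamma|=s-1\,\}\cup\{0\}$, because every $\eta^\gamma$ with $|\gamma|=s-1$ equals $x_1\cdot\eta^{\gamma+e_1}=x_2\cdot\eta^{\gamma+e_2}$ with $\eta^{\gamma+e_1},\eta^{\gamma+e_2}\in W$, while any $\eta^\beta\in W$ with $\beta_1=0$ (resp. $\beta_2=0$) is killed by $x_1$ (resp. $x_2$). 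Since the monomials $\eta^\gamma$ with $|\gamma|=s-1$ generate $0:_{\omega_{R/\m^{s+1}}}\m^s$, the set $x_1W=x_2W$ generates $0:_{\bar L}\m^s=\bar K=K/I$.

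It remains to verify the colon condition of Proposition~\ref{reduction1}. If $x_1^t x_2\,w=0$ in $R/I$ with $t>0$ and $w=\eta^\beta\in W$, then by the contraction rule either $\beta_2=0$, in which case $x_2w=0$, or else $\beta_2\ge 1$ and $\beta_1<t$, in which case $x_1^t w=\eta^{\beta-te_1}=0$; in either case one of the required alternatives holds. Proposition~\ref{reduction1} now yields $K^2=IK$, i.e. $(I:\m^s)^2=I(I:\m^s)$. The only delicate points are the two reductions of the second paragraph and, above all, checking that the isomorphism of Corollary~\ref{canonical} is $R$-linear, so that the contraction description of the $x_i$-action transports to $\bar L$; after that the argument is pure monomial bookkeeping, and $d\ge 2$ is used precisely to provide two distinct variables $x_1\ne x_2$ with $x_1W=x_2W$ generating $K/I$.
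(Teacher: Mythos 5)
Your proposal is correct and follows essentially the same route as the paper: reduce to a power series ring, apply Corollary~\ref{canonical} with exponent $s+1$ to identify $(I:\m^{s+1})/I$ with copies of $\omega_{R/\m^{s+1}}$, describe that canonical module via Macaulay's inverse system (the paper's inverse monomials are exactly your contraction-dual monomials), take $W$ to be the top-degree monomials in each copy with $x=x_1$, $y=x_2$, and verify the hypotheses of Proposition~\ref{reduction1} by the same monomial bookkeeping. Your explicit treatment of the cases $s=0$ and $F_d=0$ and of the flat base change is extra care the paper leaves implicit, but it does not change the argument.
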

\begin{proof}
After completing we may assume that  $R=k[\![ x_1, \ldots, x_d ]\!]$ is a power series ring over a field $k$. We wish to apply 
Proposition~\ref{reduction1} with $K=I : {\mathfrak m}^s$. 
%Since $I_1(\varphi_d) \subset {\mathfrak m}^{s+1}$, Proposition~\ref{basic}$(d)$ 
Corollary \ref{canonical} gives an isomorphism 
\[
(I : {\mathfrak m}^{s+1})/I \cong  \omega_{R/{\mathfrak m}^{s+1}} \otimes_R F_d, 
\]
which restricts to 
\[
(I : {\mathfrak m}^s)/I \cong    \omega_{R/{\mathfrak m}^{s}} \otimes_R F_d \, .
\]

We recall some standard facts about injective envelopes of the residue field and Macaulay's inverse systems. One has
\[
\omega_{R/{\mathfrak m}^{s+1}} \cong E_{R/{\mathfrak m}^{s+1}}(k) \cong 0 :_{E_R(k)} {\mathfrak m}^{s+1} \subset 
E_R(k) \cong k[x_1^{-1}, \ldots, x_d^{-1}].
\]
The $R$-module structure of the latter is given as follows. We use the identification of $k$-vector spaces  $k[x_1^{-1}, \ldots, x_d^{-1}] =
k[x_1, x_1^{-1}, \ldots, x_d, x_d^{-1}]/N$, where $N$ is the subspace spanned by the monomials not in $k[x_1^{-1}, \ldots, x_d^{-1}]$. 
As $N$ is a $k[x_1, \ldots, x_d]$-submodule, the vector space $k[x_1^{-1}, \ldots, x_d^{-1}]$ becomes a module over 
$k[x_1, \ldots, x_d]$, and then over $R$ since each $x_i$ acts nilpotently.  The $R$-submodule $\omega_{R/{\mathfrak m}^{s+1}} =0 : {\mathfrak m}^{s+1}  \subset k[x_1^{-1}, \ldots, x_d^{-1}]$ 
is generated by the set $M$ of monomials of degree $s$ in the inverse variables $x_1^{-1}, \ldots, x_d^{-1}$.

Notice that $x_1^{s+1}M=0$ and $x_iM$ is the set of monomials of degree $s-1$ in the inverse variables. In particular, $x_1M=x_2M$ 
generates the submodule  $ \omega_{R/{\mathfrak m}^{s}} $. Moreover, if $w =x_1^{-a_1} \cdots x_d^{-a_d} \in M$ and $x_1^tx_2 w =0$, 
then $t >a_1$ or $1>a_2$, in which case $x_1^tw=0$ or $x_2w=0$. Now we may apply Proposition~\ref{reduction1} with $x=x_1$, 
$y=x_2$, and $W=M \otimes_R B \subset  \omega_{R/{\mathfrak m}^{s+1}} \otimes_R F_d$ for $B$ any $R$-basis of $F_d$. 
\end{proof}

\bs

\begin{Corollary}\label{corollary2.5}
Let $(R, {\mathfrak m})$ be a regular local ring of dimension $ \geq 2$ containing a field and $I$ an $R$-ideal. If $R/I$ is Gorenstein, 
then
\[
(I : {\mathfrak m}^s)^2=I(I : {\mathfrak m}^s) 
%\qquad \mbox{for every} \ \  s\le {\rm o}(I_1(\varphi_d))-1\, .
\]
for every $ s\le {\rm o}(I)-1$.
\end{Corollary}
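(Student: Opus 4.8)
The plan is to reduce the corollary to Theorem~\ref{main_reduction_1}. First I would dispose of the trivial case $\depth R/I \ge 1$: then $0 :_{R/I} \m = 0$, so $I :_R \m^s = I$ for all $s$, and the claimed equality $(I:\m^s)^2 = I(I:\m^s)$ is the tautology $I^2 = I^2$. (Consistently, in this case $F_d = 0$, so $\text{o}(I_1(\varphi_d)) = \infty$ and Theorem~\ref{main_reduction_1} applies vacuously as well.)

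So I may assume $\depth R/I = 0$. Since $R/I$ is Gorenstein, hence Cohen--Macaulay, this forces $\dim R/I = \depth R/I = 0$, so $R/I$ is Artinian Gorenstein and $\pd_R R/I = d$ by the Auslander--Buchsbaum formula; thus $(F_\bullet, \varphi_\bullet)$ has length exactly $d$. The key step is then the self-duality of the minimal free resolution of a Gorenstein quotient of a regular local ring: because $\omega_R = R$, the dual complex $F_\bullet^{*}[-d]$ is again a minimal free resolution, now of ${\rm Ext}^d_R(R/I, R) \cong R/I$. In particular $F_d \cong R$, so ${\rm coker}\,\varphi_d^{*} = {\rm Ext}^d_R(R/I, R)$ is cyclic and equals $R/I_1(\varphi_d)$; comparing with $R/I$ yields $I_1(\varphi_d) = I$, hence $\text{o}(I_1(\varphi_d)) = \text{o}(I)$. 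Theorem~\ref{main_reduction_1} now gives $(I:\m^s)^2 = I(I:\m^s)$ for every $s \le \text{o}(I_1(\varphi_d)) - 1 = \text{o}(I) - 1$.

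I do not anticipate a genuine obstacle here: the corollary is precisely the Gorenstein reformulation of Theorem~\ref{main_reduction_1} announced in the introduction, and the sole non-formal ingredient --- the classical self-duality of Gorenstein resolutions over a regular local ring, with its immediate consequence $I_1(\varphi_d) = I$ --- is well known. The only mild care required is to separate the positive-depth case, where all iterated socles already coincide with $I$, from the Artinian case that actually carries the content of Theorem~\ref{main_reduction_1}.
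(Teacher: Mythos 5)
Your argument is correct and is essentially the paper's own proof: the corollary is deduced from Theorem~\ref{main_reduction_1} via the self-duality of the minimal free resolution of the Gorenstein quotient $R/I$, which gives $I_1(\varphi_d)=I$ and hence ${\rm o}(I_1(\varphi_d))={\rm o}(I)$. Your separate treatment of the positive-depth case, where $I:\mathfrak{m}^s=I$ and the statement is trivial, is a harmless extra precaution that the paper leaves implicit.
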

\begin{proof}
The assertion follows from Theorem~\ref{main_reduction_1} and the symmetry of the resolution of $R/I$.
\end{proof}

\bs

For another example showing that the assumption $s \leq \text{o}(I) -1$ is needed in Corollary~\ref{corollary2.5},
let $(R, {\mathfrak m})$ be a power series ring over a field and $I$ a generic homogeneous  ${\mathfrak m}$-primary 
Gorenstein ideal, in the sense that its dual socle generator is a general form, say of degree $2s-2$. 
One has $\text{o}(I)=s$, see for instance \cite[3.31]{IE}. On the other hand $\text{o}(I \colon 
{\mathfrak m}^s)=s-1$, hence $I \colon {\mathfrak m}^s \not\subset \overline{I}$.

\bs

As pointed out in \cite{CHV,  CP1, CP2, CPV, G, PU, W, WY}, the 
case of a regular ambient ring is the worst  as far as integral dependence is concerned. If the ambient ring is not regular,
we can extend the result about integral dependence with reduction number one 
to the range $s \leq \text{o}(I_1(\varphi_d))$. In \cite{CHPU} we extend the range of integrality considerably further 
even though, as a trade off, we do not obtain reduction number one in general.

\bs

\bs

\section{A formula for iterated socles}

The second goal of this article is to provide closed formulas for the generators of iterated 
socles of any finitely generated module $M$ over a Noetherian local ring $(R, \m)$ of 
equicharacteristic zero. After completing and choosing a Cohen presentation, we may assume that
$R=k[\![x_1, \ldots, x_d]\!]$ is a power series ring in $d$ variables over a field $k$ of characteristic zero. Let 
$(F_{\bullet}, \varphi_{\bullet})$ be a minimal free $R$-resolution of $M$. We will use this
resolution to construct $0:_M \m^s$ in the range $s \leq \text{o}(I_1(\varphi_d))$.

%Let in the entire range $s \leq \text{o}(I_1(\varphi_d))$; here 
%$(F_{\bullet}, \varphi_{\bullet})$ is a minimal free $R$-resolution of $R/I$ and $d=\dim R$. After completing, we may also assume that
%$R$ is a power series ring over a field, which will be the standard setting in this section.
%Our result generalizes the work of Herzog \cite{He}, who gave formulas for the 
%socle generators of $R/I$, for $I$ any ideal generated by homogeneous polynomials, in terms of
%Jacobians of the entries of the matrices in a minimal homogeneous 
%free $R$-resolution of $R/I$. We stress again that our result does not follow by repeating Herzog's  $s$ times, which would require 
%knowing the resolution of the socles at each step.  Our construction instead produces the iterated socle 
%n one step from the minimal 
Our result generalizes the work of Herzog \cite{He}, who treated the case where $s=1$ and $M=R/I$ 
for $I$ an ideal generated by homogeneous polynomials.  We stress again that our result does not follow by repeating Herzog's  $s$ times, which would require 
knowing the resolution of the socles at each step.  Our construction instead produces the iterated socle 
in one step from the minimal free resolution of $M$. Our approach resembles that of Herzog, but there are serious obstacles that need to be overcome.
Proposition~\ref{decomposition} below allows us to reduce first to the computation of $0:_M J$, where $J$ is a special monomial complete 
intersection. We then need to consider the Koszul complex of
 this complete intersection and define on it a $k$-linear contracting 
homotopy modeled after the usual de Rham differential, which splits the Koszul differential in positive degrees. 
Unfortunately what we construct is not a derivation, yet it allows our calculations to go through.

\medskip

In the setting of Proposition \ref{decomposition} below one has ${\mathfrak m}^s = \cap (x_1^{a_1}, \ldots, x_d^{a_d})$.
Hence the assertion of the  proposition would follow if one could take the intersection out of the colon as
a sum. This is indeed possible by linkage theory if $M=R/I$ is a Gorenstein ring and  $s \leq {\rm o}(I)$. The content 
of the proposition is that even the weaker assumption $s \leq {\rm o}(I_1(\varphi_d))$ suffices.
We use the notation  $\underline{a}=(a_1, \ldots, a_d)$ for a vector in ${\mathbb Z}^d$ and write
$|\underline{a}| =  \displaystyle\sum_{i=1}^d a_i$.

%In the context of the next proposition, we remark that in general one cannot take an intersection out of a colon as a sum. 
%By linkage, this is true if $I$ is a Gorenstein ideal contained into ${\mathfrak m}^s$. We can get away with it, with the assumption 
%on the order of the ideal of entries of the last map, because of the identification $I:{\mathfrak m}^s/I \cong \omega_{R/{\mathfrak m}^s} \otimes_R F_d$
%given by Corollary~\ref{canonical}.

\begin{Proposition}\label{decomposition}
Let $(R,\m)$ be a regular local ring with a regular system of parameters 
%power series ring over a field in the variables 
$x_1, \ldots ,x_d$, $M$ a finite $R$-module, 
and $(F_{\bullet}, \varphi_{\bullet})$ a minimal free $R$-resolution of $M$. If $s \leq {\rm o}(I_1(\varphi_d))$, then
\[
0 :_M  {\mathfrak m}^s = \sum_{\substack{|\underline{a}|=s+d-1\\ a_i>0}} 0 :_M (x_1^{a_1}, \ldots, x_d^{a_d}).
\] 
\end{Proposition}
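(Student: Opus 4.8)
The plan is to prove the two containments separately, with the nontrivial work entirely in the inclusion ``$\subseteq$''. The reverse inclusion ``$\supseteq$'' is immediate: since $\m^s$ is generated by the monomials $x_1^{b_1}\cdots x_d^{b_d}$ with $|\underline b|=s$, and each such monomial lies in $(x_1^{a_1},\ldots,x_d^{a_d})$ for a suitable $\underline a$ with $a_i>0$ and $|\underline a|=s+d-1$ (by pigeonhole, some $b_i\ge a_i$ after choosing $a_i=b_i+1$ reduced appropriately, or more cleanly: $\m^s=\bigcap_{|\underline a|=s+d-1,\ a_i>0}(x_1^{a_1},\ldots,x_d^{a_d})$, so each $\m^s\supseteq$ no—rather $\m^s$ equals that intersection), so $\m^s\subseteq (x_1^{a_1},\ldots,x_d^{a_d})$ fails; instead one uses that $0:_M(x_1^{a_1},\ldots,x_d^{a_d})\subseteq 0:_M\m^s$ because $\m^s$ is contained in each such ideal is false. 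Let me restate: one has $\m^s=\bigcap_{\underline a}(x_1^{a_1},\ldots,x_d^{a_d})$, hence each summand $0:_M(x_1^{a_1},\ldots,x_d^{a_d})$ annihilates a superset of $\m^s$ only if the ideal is contained in $\m^s$; since in fact $(x_1^{a_1},\ldots,x_d^{a_d})\supseteq\m^{|\underline a|}=\m^{s+d-1}\subseteq\m^s$ is the wrong direction too. The correct elementary observation is: $(x_1^{a_1},\ldots,x_d^{a_d})$ contains $\m^{s+d-1}$, and more to the point, since every generator $x_i^{a_i}$ lies in $\m^s$ (as $a_i\le s+d-1$, no). The clean statement: for each such $\underline a$, $\m^s\subseteq(x_1^{a_1},\ldots,x_d^{a_d})$ is FALSE in general; rather $(x_1^{a_1},\ldots,x_d^{a_d})\subseteq\m^{\min a_i}$, and the containment $\supseteq$ of the proposition follows from $\m^s=\bigcap_{\underline a}(x_1^{a_1},\ldots,x_d^{a_d})$ together with the general fact $\sum_i (0:_M \a_i)\subseteq 0:_M\bigcap_i\a_i$. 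I would record the identity $\m^s=\bigcap_{|\underline a|=s+d-1,\,a_i>0}(x_1^{a_1},\ldots,x_d^{a_d})$ as a short lemma and deduce ``$\supseteq$'' from it.

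For the inclusion ``$\subseteq$'' I would use Proposition~\ref{basic} and Corollary~\ref{canonical} as the structural backbone. By Corollary~\ref{canonical}, $0:_M\m^s\cong\omega_{R/\m^s}\otimes_R F_d$, and by Proposition~\ref{basic}(b) applied with $J=(x_1^{a_1},\ldots,x_d^{a_d})$ (which is perfect of grade $d$, and satisfies $I_1(\varphi_d)\subseteq J$ whenever $\underline a$ is chosen so that the monomial order of $J$ is at most $\mathrm{o}(I_1(\varphi_d))$—here $\min a_i$ may be small but what matters is $\mathrm{o}(J)\le\mathrm{o}(I_1(\varphi_d))$, which does hold since $J\supseteq\m^{s+d-1}$ does not directly give it, so one instead notes $I_1(\varphi_d)\subseteq\m^{\mathrm{o}(I_1(\varphi_d))}\subseteq\m^s$, and one needs $\m^s\subseteq J$; but $\m^s\subseteq J$ is exactly the issue—indeed $\m^s\not\subseteq J$ for a single $\underline a$, only the intersection over all $\underline a$ equals $\m^s$). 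This tension is the heart of the matter: one cannot directly apply Proposition~\ref{basic} to each $J$ because $I_1(\varphi_d)$ need not lie in a single $(x_1^{a_1},\ldots,x_d^{a_d})$. The resolution: for a fixed $\underline a$ with $|\underline a|=s+d-1$ and $a_i>0$, one has $\min_i a_i\le s$; so we cannot conclude $\m^s\subseteq J$. Instead I would argue as follows: the sum $\sum_{\underline a} 0:_M(x_1^{a_1},\ldots,x_d^{a_d})$ certainly lies in $0:_M\bigcap_{\underline a}(x_1^{a_1},\ldots,x_d^{a_d})^{??}$—no. Let me take the honest route: set $K=\sum_{\underline a}0:_M(x_1^{a_1},\ldots,x_d^{a_d})$; clearly $K\subseteq 0:_M\m^{s+d-1}$ since each generator $x_j^{a_j}$ of any $J$ has exponent $a_j\le s+d-1$... this still does not give $0:_M\m^s$. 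The key realization, which I would make precise, is that $K$ is annihilated by $\bigcap_{\underline a}(x_1^{a_1},\ldots,x_d^{a_d})=\m^s$: an element killed by some particular $J_{\underline a}$ is in particular killed by $\m^s$ precisely because $\m^s\subseteq J_{\underline a}$? No. So the direction $K\subseteq 0:_M\m^s$ requires $\m^s\subseteq J_{\underline a}$ for every $\underline a$, which is FALSE.

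Thus the ``$\subseteq$'' direction, $0:_M\m^s\subseteq K$, is the one to prove, and the ``$\supseteq$'' direction $K\subseteq 0:_M\m^s$ is the one that looks problematic—so I would look again. In fact $0:_M(x_1^{a_1},\ldots,x_d^{a_d})\subseteq 0:_M\m^s$ does hold, because $\m^s$ is generated by monomials of degree $s$ and each such monomial $x^{\underline b}$ with $|\underline b|=s$ satisfies: for the chosen $\underline a$ there is some $i$ with $b_i\ge a_i$ would need $|\underline b|\ge|\underline a|-(d-1)=s$, i.e. some $b_i\ge a_i$ fails in general—e.g. $d=2$, $s=2$, $\underline a=(2,1)$, $\underline b=(0,2)$: $b_1=0<2$, $b_2=2\ge1$, so $x^{\underline b}=x_2^2\in(x_1^2,x_2)$. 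Generally for $|\underline b|=s$ and $|\underline a|=s+d-1$, $a_i>0$: if $b_i<a_i$ for all $i$ then $b_i\le a_i-1$, so $s=|\underline b|\le|\underline a|-d=s-1$, contradiction. Hence some $b_i\ge a_i$, so $x^{\underline b}\in(x_1^{a_1},\ldots,x_d^{a_d})$, giving $\m^s\subseteq(x_1^{a_1},\ldots,x_d^{a_d})$ for EVERY such $\underline a$. Good—so ``$\supseteq$'' is the easy direction after all, and ``$\subseteq$'' is the main obstacle.

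For ``$\subseteq$'', I would prove it by passing to $R/\m^{s+d-1}$ (or working with inverse systems as in the proof of Theorem~\ref{main_reduction_1}). Via Corollary~\ref{canonical}, identify $0:_M\m^s\cong\omega_{R/\m^s}\otimes_R F_d$ and, for each admissible $\underline a$, identify $0:_M(x_1^{a_1},\ldots,x_d^{a_d})\cong\omega_{R/(x_1^{a_1},\ldots,x_d^{a_d})}\otimes_R F_d$ using Proposition~\ref{basic}(b); here the hypothesis $I_1(\varphi_d)\subseteq(x_1^{a_1},\ldots,x_d^{a_d})$ needed for part (b) follows since $I_1(\varphi_d)\subseteq\m^{\mathrm{o}(I_1(\varphi_d))}\subseteq\m^s\subseteq(x_1^{a_1},\ldots,x_d^{a_d})$ by the elementary containment just established together with $s\le\mathrm{o}(I_1(\varphi_d))$. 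Since all these colon modules are direct sums of copies of canonical modules over $F_d$, and the canonical modules sit inside $E_R(k)=k[x_1^{-1},\ldots,x_d^{-1}]$ as $\omega_{R/\m^s}=\langle\text{monomials of degree }\le s-1\rangle$ and $\omega_{R/(x_1^{a_1},\ldots,x_d^{a_d})}=\langle x_1^{-c_1}\cdots x_d^{-c_d}:0\le c_i<a_i\rangle$, the statement reduces to the purely combinatorial fact that the ``staircase'' region $\{c:|\underline c|\le s-1\}$ is the union of the boxes $\{c:c_i<a_i\}$ over all $\underline a$ with $a_i>0$, $|\underline a|=s+d-1$. This is elementary: given $\underline c$ with $|\underline c|\le s-1$, choose $a_i=c_i+1$ and distribute the remaining $s+d-1-\sum(c_i+1)=s-1-|\underline c|\ge 0$ units arbitrarily among the coordinates to reach $|\underline a|=s+d-1$ with $a_i>c_i\ge 0$, i.e. $a_i\ge c_i+1>c_i$ and $a_i>0$; conversely each box lies in the staircase since $c_i\le a_i-1$ forces $|\underline c|\le|\underline a|-d=s-1$. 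The one point requiring care is that this combinatorial equality of subsets of $E_R(k)$ must be promoted to an equality of $R$-submodules of $\omega_{R/\m^{s+d-1}}\otimes_R F_d$ compatibly with the isomorphisms of Proposition~\ref{basic}; I expect this compatibility—tracking that the inclusion $0:_M(x_1^{a_1},\ldots,x_d^{a_d})\hookrightarrow 0:_M\m^s$ corresponds under the $\mathrm{Tor}$-identifications to the inclusion of the box into the staircase tensored with $\mathrm{id}_{F_d}$—to be the main obstacle, and I would address it by naturality of the isomorphisms in Proposition~\ref{basic}(a),(b) with respect to the surjections $R/\m^s\to R/(x_1^{a_1},\ldots,x_d^{a_d})$ dualized into $\mathrm{Ext}^d_R(-,R)$.
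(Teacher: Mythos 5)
Your final argument is correct, but it reaches the conclusion by a different route than the paper, and the one step you flag as ``the main obstacle'' can be made to disappear. Both directions ultimately rest on the same elementary containment: writing $J_{\underline a}=(x_1^{a_1},\ldots,x_d^{a_d})$ with $a_i>0$ and $|\underline a|=s+d-1$, your pigeonhole argument shows $\m^s\subseteq J_{\underline a}$, which gives the easy inclusion ``$\supseteq$'' and also the hypothesis $I_1(\varphi_d)\subseteq\m^s\subseteq J_{\underline a}$ needed for Proposition~\ref{basic}(b); only this settled argument, not the exploratory back-and-forth preceding it, belongs in a final write-up. For ``$\subseteq$'' you pass to Macaulay inverse systems inside $E_R(k)=k[x_1^{-1},\ldots,x_d^{-1}]$ and prove that the staircase $\{\underline c:|\underline c|\le s-1\}$ is the union of the boxes $\{\underline c: c_i<a_i\}$; this is exactly the Matlis dual of the irreducible decomposition $\m^s=\bigcap_{\underline a}J_{\underline a}$ with which the paper begins, so the combinatorial content is equivalent. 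Where the approaches genuinely differ is in the transfer back to $M$: the paper never compares identifications for different $J_{\underline a}$. It notes that $0:_{\omega_{R/\m^s}}J_{\underline a}\cong\omega_{R/J_{\underline a}}$ has annihilator exactly $J_{\underline a}$, so $\sum_{\underline a}0:_{\omega_{R/\m^s}}J_{\underline a}$ is a faithful $R/\m^s$-submodule of the canonical module and therefore equals $\omega_{R/\m^s}$; then it transports this equality through the single $R$-linear isomorphism $E:=0:_M\m^s\cong\omega_{R/\m^s}\otimes_R F_d$ of Corollary~\ref{canonical} to get $E=\sum_{\underline a}0:_E J_{\underline a}$, and finishes with $0:_E J_{\underline a}=0:_M J_{\underline a}$, which again uses $\m^s\subseteq J_{\underline a}$. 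Your proposed remedy--checking naturality of Proposition~\ref{basic}(a),(b) in $N$ with respect to the surjections $R/\m^s\twoheadrightarrow R/J_{\underline a}$--is true and would work, but it is unnecessary: being annihilated by $J_{\underline a}$ is an intrinsic property of an element, so the decomposition of $\omega_{R/\m^s}\otimes_R F_d$ into the submodules $0:J_{\underline a}$ is carried over by any $R$-linear isomorphism, with no compatibility between the separate identifications of Proposition~\ref{basic}(b) required (and hence no need to invoke that proposition for each $J_{\underline a}$ at all). Incidentally, the ambient module in your compatibility discussion should be $\omega_{R/\m^s}\otimes_R F_d$, not $\omega_{R/\m^{s+d-1}}\otimes_R F_d$. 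In short: your proof is correct modulo the routine naturality verification you defer, while the paper's faithful-submodule argument buys a shorter proof that sidesteps both the explicit inverse-system computation and that verification.
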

\begin{proof}
One has an irreducible decomposition 
\[
{\mathfrak m}^s = \bigcap_{|\underline{a}|=s+d-1} (x_1^{a_1}, \ldots, x_d^{a_d})\, .
\]
Since any of these ideals $J_{\underline{a}} = (x_1^{a_1}, \ldots, x_d^{a_d})$ contains ${\mathfrak m}^s$, we obtain 
\[
\omega_{R/J_{\underline{a}}} \cong {\rm Hom}_R (R/J_{\underline{a}}, \omega_{R/{\mathfrak m}^s}) \cong 
0 :_{\omega_{R/{\mathfrak m}^s}} J_{\underline{a}} \, .
\]
In particular, $\text{ann}_{{}_R} (0 :_{\omega_{R/{\mathfrak m}^s}} J_{\underline{a}}) = J_{\underline{a}}$. Therefore
\[
\text{ann}_{{}_R} \bigg( \sum_{\underline{a}} 0 :_{\omega_{R/{\mathfrak m}^s}} J_{\underline{a}} \bigg)
=  \bigcap_{\underline{a}} J_{\underline{a}} = {\mathfrak m}^s \, .
\]
In other words, \ $\displaystyle\sum_{\underline{a}} 0 :_{\omega_{R/{\mathfrak m}^s}} J_{\underline{a}}$ \ is a faithful
$R/{\mathfrak m}^s$-submodule of the canonical module $\omega_{R/{\mathfrak m}^s}$. As $\omega_{R/{\mathfrak m}^s}$ cannot have 
a proper faithful $R/{\mathfrak m}^s$-submodule, we conclude that 
\[
\omega_{R/{\mathfrak m}^s} = \sum_{\underline{a}} 0 :_{\omega_{R/{\mathfrak m}^s}} J_{\underline{a}} \, .
\]

According to Corollary~\ref{canonical}, the module $E =0 :_M {\mathfrak m}^s$ is isomorphic to 
$\omega_{R/{\mathfrak m}^s} \otimes_R F_d$. Therefore
\[
E = \sum_{\underline{a}} 0 :_E J_{\underline{a}} \, .
\]
Finally, the inclusion ${\mathfrak m}^s \subset J_{\underline{a}}$ gives $0 :_M {\mathfrak m}^s \supset 0 :_M J_{\underline{a}}$,
hence $0 :_E J_{\underline{a}} = 0 :_M J_{\underline{a}}$.
\end{proof}

\bs

In the next discussion we set up the interpretation of cycles in tensor complexes that we will use to obtain explicit formulas for Koszul cycles and, eventually, for iterated socles. 

\ss

\begin{Discussion}\label{staircase} {\rm 

Let $R$ be a Noetherian ring, let $M,N$ be finite $R$-modules, and let $\F_{\bullet}, \G_{\bullet}$  
be resolutions of $M, N$ by finite free $R$-modules with augmentation maps $\pi, \rho$, respectively. The graded $R$-module $\text{Tor}_{\bullet}^R(M,N)$ 
can be identified with these homology modules,
\[ 
H_{\bullet}(\F_{\bullet} \otimes_R N) \cong H_{\bullet}(\F_{\bullet} \otimes_R \G_{\bullet})\cong H_{\bullet}(M \otimes_R \G_{\bullet})\, .
\]
More precisely, the maps

% \[ 
% \xymatrix{ &\F_{\bullet} \otimes_R \G_{\bullet} \ar[ld]_{\pi \otimes \text{id}_{\bullet}}\ar[rd]^{\text{id}_{\bullet}\otimes \rho}&\\
% M \otimes_R \G_{\bullet} &&\F_{\bullet} \otimes_R N}
% \]

\begin{center}
\begin{pspicture}(-3,-0.2)(6,1.3)
\psset{xunit=.5cm, yunit=.5cm}

\rput(3.5,2.1){$F_{\bullet} \otimes_R G_{\bullet}$} 

\psline[linestyle=solid,linewidth=0.55pt,%arrowlength=1.5
]{->}(3,1.6)(0.2,0.4)
\psline[linestyle=solid,linewidth=0.55pt]{->}(3.8,1.6)(6.6,0.4)

\rput(0.5,1.1){${}^{{\rm id}_{\bullet} \otimes \rho}$}

\rput(6.2,1.1){${}^{\pi \otimes {\rm id}_{\bullet}}$}

\rput(0,-0.1){$F_{\bullet} \otimes_R N$}  \rput(7,-0.1){$M \otimes_R G_{\bullet}$} 

\end{pspicture}
\end{center}
induce epimorphisms on the level of cycles and isomorphisms on the level of homology.  

Recall that an element $\alpha=
(\alpha_0,  \ldots, \alpha_t)$ of $[\F_{\bullet} \otimes_R \G_{\bullet}]_t=\displaystyle\bigoplus_{i=0}^t \F_i \otimes \G_{t-i}$ is a cycle in 
$\F_{\bullet} \otimes_R \G_{\bullet}$ if and only if 
\[ (\text{id}_{\bullet} \otimes \partial^{\G_{\bullet}}) (\alpha_i)= (-1)^{i+1}(\partial^{\F_{\bullet}} \otimes \text{id}_{\bullet})(\alpha_{i+1})
\]
for all $i$. To make the isomorphism $H_{\bullet}(\F_{\bullet} \otimes_R N) \stackrel{\cong}\longrightarrow  H_{\bullet}(M \otimes_R \G_{\bullet})$ 
explicit, we take an arbitrary cycle $v \in [Z(F_{\bullet} \otimes_R N) ]_t$. Lift $v$ to an element $\alpha_t \in F_t \otimes G_0$ with 
$(\text{id} \otimes \rho)(\alpha_t)=v$. Now $\alpha_t$ can be extended to a cycle $\alpha=(\alpha_0,  \ldots,, \alpha_t)\in  
[Z(F_{\bullet} \otimes_R G_{\bullet})]_t$. The image $u=(\pi \otimes \text{id})(\alpha_0)$ is in $[Z(M \otimes_R G_{\bullet})]_t$, and 
the class of $u$ is the image of the class of $v$ under the above isomorphism. 

% \[ 
% \xymatrix{ 
% &&&& \ddots \\
% &&& \ar[d] &\\
% && F_{t-1}\otimes G_1\ar[d]\ar[r] & F_{t-2} \otimes G_1 & \\
% \alpha_t \in \ar[d] & \F_t \otimes \G_0 \ar[d]\ar[r] &F_{t-1} \otimes G_0 & &\\
% v \in  &  F_t \otimes N& & & }
% \]

\begin{center}
\begin{pspicture}(-1.6,-0.2)(18,6.7)
\psset{xunit=.5cm, yunit=.5cm}

\rput(20.9,13.5){$\alpha_0$} \rput(25.2,13.5){$u$}

\psline[linestyle=solid,linewidth=0.55pt]{->}(21.6,13.5)(24.7,13.5) \psline[linestyle=solid,linewidth=0.55pt](21.6,13.4)(21.6,13.6)

\rput{270}(20.9,12.75){$\in$} \rput{270}(25.2,12.75){$\in$}

\rput(20.9,12){$F_0 \otimes G_t $} \psline[linestyle=solid,linewidth=0.55pt]{->>}(22.6,12)(23.6,12) \rput(25.2,12){$M \otimes G_t$}

\psline[linestyle=solid,linewidth=0.55pt]{->}(20.9,11.5)(20.9,10.5)

\rput(13.45,10){$\alpha_1 \in$}
\rput(16.28,10){$ F_1 \otimes G_{t-1}$} \psline[linestyle=solid,linewidth=0.55pt]{->}(18.35,10)(19.35,10) \rput(21.3,10){$F_0 \otimes  G_{t-1}$}

\psline[linestyle=solid,linewidth=0.55pt]{->}(15.9,9.5)(15.9,8.5)

\rput(14.7,7.2){$\cdot$} \rput(15.2,7.6){$\cdot$} \rput(15.7,8){$\cdot$}
\rput(13.2,6){$\cdot$} \rput(13.7,6.4){$\cdot$} \rput(14.2,6.8){$\cdot$}

\psline[linestyle=solid,linewidth=0.55pt]{->}(12.9,5.5)(12.9,4.5)

%\rput(6.7,6){$\beta_{n-2} \in$}
%\rput(10.2,6){$    F_{n-2} \otimes G_{2}$} \psline[linestyle=solid,linewidth=0.75pt]{->}(12.5,6)(13.5,6) \rput(15.8,6){$ F_{n-3} \otimes G_{2} $}

%\psline[linestyle=solid,linewidth=0.75pt]{->}(10.25,5.5)(10.25,4.5)

\rput(4.1,4){$\alpha_{t-1} \in$}
\rput(7.4,4){$ F_{t-1} \otimes G_{1}$} \psline[linestyle=solid,linewidth=0.55pt]{->}(9.5,4)(10.5,4) \rput(12.6,4){$F_{t-2} \otimes G_1$}

\psline[linestyle=solid,linewidth=0.55pt]{->}(7.7,3.5)(7.7,2.5)

\rput(0.1,2){$\alpha_t \in$}
\rput(2.65,2){$ F_t \otimes G_0$} \psline[linestyle=solid,linewidth=0.55pt]{->}(4.35,2)(5.35,2) \rput(7.4,2){$ F_{t-1} \otimes G_0$}

\psline[linestyle=solid,linewidth=0.55pt]{->}(-0.25,1.5)(-0.25,0.5) \psline[linestyle=solid,linewidth=0.55pt](-0.35,1.5)(-0.15,1.5)
\psline[linestyle=solid,linewidth=0.55pt]{->>}(2.55,1.5)(2.55,0.5)

\rput(0.1,0){$v \in$}
\rput(2.5,0){$F_t \otimes N$}

%%\psline[linestyle=solid,linecolor=red]{->}(8.2,6.8)(9.8,5.2)

\end{pspicture}
\end{center} }
\end{Discussion}

\bs

\bs

In the next discussion we construct a contracting homotopy for certain complexes $G_{\bullet}$ that allows us to 
invert the vertical differentials in the staircase of Discussion~\ref{staircase}. This provides an explicit formula to pass from 
an element $v$ as above to an element $u$. 

\ss
\begin{Discussion} \label{nabla}
{\rm  
Let $A=k[\![ y_1, \ldots, y_n]\!] \supset A'=k[y_1, \ldots, y_n]$, where $k$ is a field and $y_1, \ldots, y_n$ are variables.
We say that an $A$-module $M$ is {\it graded} if $M \cong A\otimes_{A'} M'$ for a graded $A'$-module $M'$ with respect 
to the standard grading of $A'$. 
One defines, in the obvious way, homogeneous maps and homogeneous complexes of 
graded $A$-modules. We call an element $u$ of a graded $A$-module $M=A\otimes_{A'} M'$ 
homogeneous of degree $d$ if $u = 1 \otimes u'$ for a homogeneous element $u'$ of $M'$ of degree $d$. Notice that every 
element $u$ of $M$ can be uniquely written in the form $u = \sum_{i \in {\mathbb Z}} u_i$ with $u_i$ 
homogeneous of degree $i$.

\smallskip

%Let $A=k[\![ y_1, \ldots,y_n ]\!]$ be a power series ring over a field $k$ in the variables $y_1, \ldots, y_n$, which are homogeneous  
%of  degree $1$. 
We consider the universally finite derivation  \[d:A \longrightarrow \Omega_k(A)=F=Ae_1 \oplus \ldots  \oplus Ae_n\] 
with $e_i=dy_i$ homogeneous of degree 1. Let $L_{\bullet}=K_{\bullet}(y_1,\ldots,y_n;A)=\displaystyle\bigwedge^{\bullet} F$ be the 
Koszul complex of $y_1, \ldots, y_n$ with differential $\partial_{\bullet}$ mapping $e_i$ to $y_i$. We extend $d$ to a $k$-linear map 
$d_{\bullet}: L_{\bullet} \longrightarrow L_{\bullet}[1]$, homogeneous  with respect to the internal and the homological degree, by 
setting $d(av)=d(a) \wedge v$ for $a \in A$ and $v=e_{\nu_1} \wedge \ldots \wedge e_{\nu_{\ell}}$ a typical basis element in the 
Koszul complex. After restriction to any graded strand $L_{\bullet_m}$ with respect to the internal grading, one has
\begin{equation}\label{e2}
{ \partial_{\bullet} d_{\bullet}+ d_{\bullet} \partial_{\bullet}}_{| L_{\bullet_m}}= m \,  \text{id}_{L_{\bullet_m}} \, ,
\end{equation}
as can be seen from the Euler relation.

If ${\rm char} \, k=0$ we define 
\[
\w{d}_{\bullet}:{L_{\bullet}}_{>0} \longrightarrow {L_{\bullet}}_{>0} [1] 
\] 
% \qquad 
by $\w{d}_{\bullet}(\eta)= \displaystyle\sum_{m>0} \displaystyle \frac{1}{m} d_{\bullet}(\eta_m), $ where $\eta_m$ denotes the degree $m$ component of  $\eta$ 
in the internal grading. The equality (\ref{e2}) shows that $\w{d}_{\bullet}$ is a $k$-linear contracting homotopy of ${L_{\bullet}}_{>0}$.

Now let $R=k[\![ x_1, \ldots, x_n ]\!]$ be another power series ring and let $a_1, \ldots, a_n$ be positive integers. We consider the subring 
$A=k[\![y_1, \ldots,y_n]\!]$ where $y_i=x_i^{a_i}$ are homogeneous of degree 1. Write $V=\displaystyle\bigoplus_{0\le \nu_i < a_i} \, k \, x_1^{\nu_1}\cdots x_n^{\nu_n}$. 
One has $R \cong V \otimes_k A$ as $A$-modules. Thus $R$ is a free $A$-module, which we grade by giving the elements of $V$ degree 0.  
With $K_{\bullet}$ denoting the  Koszul complex $K_{\bullet}(y_1, \ldots, y_n; R)$ we obtain isomorphisms of  complexes of graded 
$A$-modules 
\[
K_{\bullet}\cong R \otimes_A L_{\bullet} \cong V \otimes_k L_{\bullet} \, .
\] 
We define $\nabla_{\bullet}: K_{\bullet} \longrightarrow K_{\bullet}[1]$ 
by $\nabla_{\bullet} = V \otimes_k d_{\bullet}$, which is a $k$-linear homogenous map with respect to the internal and the homological 
degree. We notice that $\nabla_0: R \longrightarrow R \otimes_A \Omega_k(A)$ is no longer a derivation but only a connection of $A$-modules, 
which means it satisfies the product rule if one of the factors is in $A$.
Again, if  ${\rm char} \, k=0$ we define 
\[
\w{\nabla}_{\bullet}:{K_{\bullet}}_{>0} \longrightarrow {K_{\bullet}}_{>0} [1]
\]
by $\w{\nabla}_{\bullet}(\eta)= 
\displaystyle\sum_{m>0} \displaystyle\frac{1}{m} \nabla_{\bullet}(\eta_m)$. Alternatively, one has the description 
$\w{\nabla}_{\bullet}=V \otimes_k \w{d_{\bullet}}$. Hence by the discussion above,  $\w{\nabla}_{\bullet}$ is a $k$-linear contracting homotopy of 
${K_{\bullet}}_{>0}$.

We now describe the maps $\nabla_{\bullet}$ and $\w{\nabla}_{\bullet}$ more explicitly.  Let $S[x]$ be a polynomial ring in one 
variable over a commutative ring $S$ and let $a$ be a positive integer. We consider the $S$-linear map
$\frac{d \ }{dx^a}: S[x] \to S[x]$ with
\[
\frac{d \ }{dx^a}(x^b) = \left\lfloor\frac{b}{a} \right\rfloor x^{b-a}  \, .
\]
We write $S[y]$ for the polynomial subring $S[x^a]$ and $U$ for the free $S$-module $U=\displaystyle\bigoplus_{0\le \nu < a} \, S \, x^{\nu}$. 
We have $S[x] \cong U \otimes_S S[y]$ and $\frac{d \ }{dx^a}= U \otimes_S \frac{d}{dy}$. If $R=k[x_1, \ldots, x_n]$ is a polynomial ring in 
several variables and $a_1, \ldots, a_n$ are positive integers as above, we write $\frac{\partial \ \ }{\partial x^{a_i}}$ for  $\frac{d \ \ }{d x^{a_i}}$. 
To describe the maps $\nabla_{\bullet}$ and $\w{\nabla}_{\bullet}$, let $r\in R$ and let $v =e_{i_1} \wedge \ldots \wedge e_{i_{\ell}}$ be 
a basis element in the Koszul complex $K_{\bullet}$ of degree $\ell$. It turns out that 
\[ 
\nabla(rv)=\sum_{i=1}^n \frac{\partial r}{\partial x^{a_i}} \, e_i \wedge v\, ,
\]
and if ${\rm char}\, k=0$ and $rv \in {K_{\bullet}}_{>0}$,
\[
\w{\nabla}(rv) = \sum_{i,m >0} \frac{1}{m+\ell}\   \frac{\partial r_m}{\partial x^{a_i}}\, e_i \wedge v\, .
\]}
\end{Discussion}

\bs

\begin{Theorem}\label{closed_formula}
Let $R=k[\![ x_1, \ldots,x_d]\!]$ be a power series ring in the variables $x_1, \ldots, x_d$ over a field $k$ of characteristic zero and let $M$ be a 
finite $R$-module. For $a_1, \ldots, a_d$ positive integers, let  $K_{\bullet}$ denote the  Koszul complex $K_{\bullet}(x_1^{a_1}, \ldots, x_d^{a_d}; R)$ 
and let $\w{\nabla}_{\bullet}$ be defined as in \/{\rm Discussion~\ref{nabla}}. Consider a minimal free $R$-resolution  $(F_{\bullet}, \varphi_{\bullet})$ of $M$ and let 
$W_{\bullet}$ be  a graded $k$-vector space with $F_i\cong W_i \otimes_k R$.
  Assume that $(x_1^{a_1}, \ldots, x_d^{a_d}) \supset I_1(\varphi_t)$ for some $t$ and let
$w_1, \ldots, w_r$ be a $k$-basis of $W_t$. 

Then the $R$-module of Koszul cycles
${\mathcal Z}_t(x_1^{a_1}, \ldots, x_d^{a_d}; M)$ is minimally generated by the images in \, $M \otimes_R K_t$ \, of the $r$ elements 
\[
[(\mbox{\rm id}_{W_{\bullet}}\otimes_k \w{\nabla}_{\bullet}) \circ( \varphi_{\bullet}\otimes_R \mbox{\rm id}_{K_{\bullet}}) ]^t (w_{\ell}\otimes 1)\, ,
\]
where $1 \leq \ell \leq r$.
\end{Theorem}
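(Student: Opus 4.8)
The strategy is to run the construction in Discussion~\ref{staircase} with $\F_\bullet = F_\bullet$ a minimal free resolution of $M$ and $\G_\bullet = K_\bullet = K_\bullet(x_1^{a_1},\ldots,x_d^{a_d};R)$, which is a resolution of $N = R/J$ for $J = (x_1^{a_1},\ldots,x_d^{a_d})$. We want to compute cycles in $M \otimes_R K_\bullet$, whose homology is $\mathrm{Tor}_\bullet^R(M, R/J)$; in homological degree $t$ this is $\mathcal Z_t(x_1^{a_1},\ldots,x_d^{a_d};M) / \mathrm{im}$, but since $I_1(\varphi_t)\subset J$ the differential $\varphi_t$ becomes zero after tensoring with $R/J$, so $\mathrm{Tor}^R_t(M,R/J) \cong \overline{F}_t$ and in fact the boundaries inside $\mathcal Z_t(\cdots;M)$ vanish (the relevant piece of the complex $M\otimes_R K_\bullet$ has zero differential in a neighborhood of degree $t$ once one also checks $I_1(\varphi_{t+1})\subset J$, which follows from $I_1(\varphi_{t+1})\subset I_1(\varphi_t)$ by minimality). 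Thus $\mathcal Z_t(x_1^{a_1},\ldots,x_d^{a_d};M)$ is exactly the module of $t$-cycles computing this Tor, and it is free of rank $r = \dim_k W_t$. The first step is therefore to identify, via the other edge of the staircase, that these cycles are the images under $\pi\otimes\mathrm{id}$ of cycles in $F_\bullet \otimes_R K_\bullet$ lifted from $F_t \otimes_R N \cong \overline F_t = W_t\otimes_k R/J$.

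The second and central step is to make the zig-zag lift explicit. Starting from $w_\ell \otimes 1 \in F_t\otimes_R R$ (a lift of the generator $w_\ell\otimes 1 \in \overline F_t$), I would walk down the staircase: at each stage apply $\varphi_\bullet \otimes \mathrm{id}_{K_\bullet}$ to move from $F_i\otimes K_j$ to $F_{i-1}\otimes K_j$, then apply the vertical contracting homotopy to move from $F_{i-1}\otimes K_j$ to $F_{i-1}\otimes K_{j+1}$. The vertical differential on $F_\bullet \otimes_R K_\bullet$ is $\pm\,\mathrm{id}_{F_\bullet}\otimes \partial^{K_\bullet}$, and since $K_\bullet \cong V\otimes_k L_\bullet$ with $L_\bullet$ the Koszul complex on the variables $y_i$ of the graded subring $A$, the map $\w\nabla_\bullet$ of Discussion~\ref{nabla} is a $k$-linear contracting homotopy of $(K_\bullet)_{>0}$, hence $\mathrm{id}_{W_\bullet}\otimes_k \w\nabla_\bullet$ inverts the vertical differential on $F_\bullet\otimes_R K_\bullet \cong W_\bullet \otimes_k K_\bullet$ in positive Koszul degree. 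Composing these two operations $t$ times produces precisely the operator $[(\mathrm{id}_{W_\bullet}\otimes_k\w\nabla_\bullet)\circ(\varphi_\bullet\otimes_R\mathrm{id}_{K_\bullet})]^t$ in the statement, and applying it to $w_\ell\otimes 1$ produces the component $\alpha_0 \in F_0\otimes K_t$ of a cycle $\alpha$ in $F_\bullet\otimes_R K_\bullet$; its image $u = (\pi\otimes\mathrm{id})(\alpha_0) \in M\otimes_R K_t$ is the desired Koszul cycle. One must verify that the result of the iteration actually lands in $F_0\otimes K_t$ (homological degree drops by one and Koszul degree rises by one at each step, so after $t$ steps we reach $F_0\otimes K_t$) and that $\alpha$ is genuinely a cycle; the cycle condition $(\mathrm{id}\otimes\partial^K)(\alpha_i) = (-1)^{i+1}(\varphi\otimes\mathrm{id})(\alpha_{i+1})$ holds because $\w\nabla$ is a contracting homotopy and each $\alpha_{i+1}$, being in the image of the previous $\w\nabla$-step, lies in positive Koszul degree — this is exactly the content of Discussion~\ref{staircase}, applied with $v = w_\ell\otimes 1$.

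The third step is the counting claim: that the $r$ cycles $u_1,\ldots,u_r$ \emph{minimally generate} $\mathcal Z_t(x_1^{a_1},\ldots,x_d^{a_d};M)$. By Discussion~\ref{staircase} the induced map on homology $H_t(F_\bullet\otimes_R N)\to H_t(M\otimes_R \G_\bullet)$ is an isomorphism; since both sides equal the respective cycle modules (boundaries vanish, as noted), the map $v\mapsto u$ is already an isomorphism of cycle modules $\mathcal Z_t(F_\bullet\otimes_R N) = W_t\otimes_k(R/J) \otimes\cdots$ — more precisely one checks that $\mathcal Z_t(x_1^{a_1},\ldots,x_d^{a_d};M)$ is free and that the $u_\ell$ are the images of a free basis, hence form a minimal generating set by Nakayama. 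The key subtlety here, and the step I expect to be the main obstacle, is keeping the bookkeeping of the internal (polynomial) grading honest throughout the zig-zag: $\w\nabla$ is only defined on the positive-degree part $(K_\bullet)_{>0}$ and is not a derivation (only a connection), so one must check at every descent step that the relevant element genuinely has no degree-zero component — this uses minimality of $F_\bullet$ together with the hypothesis $I_1(\varphi_t)\subset J$ to guarantee that $\varphi_\bullet\otimes\mathrm{id}_{K_\bullet}$ applied to something feeds $\w\nabla$ an element supported in strictly positive internal degree. Establishing this degree positivity cleanly, and confirming that the failure of the Leibniz rule for $\w\nabla$ does no harm because we never need $\w\nabla$ to be a derivation — only a $k$-linear splitting of $\partial^K$ in positive degrees — is the heart of the argument.
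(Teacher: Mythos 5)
Your middle step is essentially the paper's argument: run Discussion~\ref{staircase} with $G_{\bullet}=K_{\bullet}$, and replace the choice of preimages under the vertical differential $\mbox{\rm id}_{F_{\bullet}}\otimes\partial^{K_{\bullet}}$ by an application of $\mbox{\rm id}_{W_{\bullet}}\otimes_k\w{\nabla}_{\bullet}$, which is legitimate because the elements to be lifted are vertical boundaries, boundaries of $K_{\bullet}$ lie in ${K_{\bullet}}_{>0}$, and $\w{\nabla}_{\bullet}$ is a $k$-linear contracting homotopy there (no Leibniz rule needed). That part is fine. The genuine gap is in your first and third steps, where you identify the cycle module and count generators. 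You assert $I_1(\varphi_{t+1})\subset I_1(\varphi_t)$ ``by minimality''; minimality only gives $I_1(\varphi_i)\subset{\mathfrak m}$, and no containment between the ideals of entries of consecutive maps holds in general, so you cannot conclude $\varphi_{t+1}\otimes R/J=0$. Worse, you then transfer this to the wrong complex: the differentials of $M\otimes_R K_{\bullet}$ are the Koszul maps, with entries $\pm x_i^{a_i}$, and have nothing to do with the $\varphi_i$; they are not zero near homological degree $t$, the boundaries $B_t(M\otimes_R K_{\bullet})$ do not vanish for $t<d$, and ${\mathcal Z}_t(x_1^{a_1},\ldots,x_d^{a_d};M)$ is neither equal to ${\rm Tor}_t^R(M,R/J)$ nor free of rank $r$ (already for $t=d$ it equals $0:_MJ$, an Artinian torsion module). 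Hence your concluding claim that the isomorphism on homology ``is already an isomorphism of cycle modules'' rests on false statements, and the minimal-generation assertion is unproved as written.

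What is actually needed, and what the paper uses, is much less: the hypothesis $I_1(\varphi_t)\subset J$ alone gives $[Z(F_{\bullet}\otimes_R N)]_t=F_t\otimes_R N$, which is minimally generated by the $w_{\ell}\otimes 1$ (no vanishing of its boundaries is claimed or required), and Discussion~\ref{staircase} supplies the key transport mechanism: the two edge maps out of $F_{\bullet}\otimes_R K_{\bullet}$ are epimorphisms on the level of cycles and isomorphisms on homology. This is what carries a generating set of $[Z(F_{\bullet}\otimes_R N)]_t$ to a generating set of $[Z(M\otimes_R K_{\bullet})]_t={\mathcal Z}_t(x_1^{a_1},\ldots,x_d^{a_d};M)$; minimality then follows because ${\mathcal Z}_t$ maps onto ${\rm Tor}_t^R(M,R/J)$, which requires $r$ generators since $B_t(F_{\bullet}\otimes_R N)={\rm im}(\varphi_{t+1}\otimes N)\subset{\mathfrak m}(F_t\otimes_R N)$ by minimality of $F_{\bullet}$. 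Replace your freeness/vanishing-of-boundaries argument by this (or an equivalent) argument; the rest of your write-up then goes through.
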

\begin{proof}
We apply Discussion~\ref{staircase} with $N=R/ (x_1^{a_1}, \ldots, x_d^{a_d})$ and $G_{\bullet}=K_{\bullet}$. Notice that 
$[Z(M\otimes_R G_{\bullet})]_t={\mathcal Z}_t(x_1^{a_1}, \ldots, x_d^{a_d}; M)$ and $[Z(F_{\bullet} \otimes_R N)]_t=F_t 
\otimes_R R/ (x_1^{a_1}, \ldots, x_d^{a_d})$ because $\varphi_t \otimes R/ (x_1^{a_1}, \ldots, x_d^{a_d})=0$  
by our assumption on $I_1(\varphi_t)$. 
Hence the latter $R$-module is minimally generated by the elements $w_{\ell} \otimes 1$, $1 \le \ell \le r$, and 
minimal generators of the former $R$-module can be obtained from these elements by applying the horizontal 
differential $\varphi_{\bullet}\otimes_R \mbox{\rm id}_{K_{\bullet}}$ and taking preimages 
under the vertical differential  $\mbox{\rm id}_{F_{\bullet}} \otimes_R \partial_{\bullet}^{K_{\bullet}}$ of Discussion~\ref{staircase}. 
Instead of taking preimages under $\mbox{\rm id}_{F_{\bullet}} \otimes_R \partial_{\bullet}^{K_{\bullet}}= \mbox{\rm id}_{W_{\bullet}} \otimes_k \partial_{\bullet}^{K_{\bullet}}$ we may apply the map $\mbox{\rm id}_{W_{\bullet}} \otimes_k \w{\nabla}_{\bullet}$ because 
the boundaries of $K_{\bullet}$ are in the subcomplex ${K_{\bullet}}_{>0}$ and $\w{\nabla}_{\bullet}$ is a contracting homotopy for the latter according to Discussion~\ref{nabla}.
\end{proof}

\begin{Corollary}\label{corollary1}
With the assumptions of \,\text{\rm Theorem~\ref{closed_formula}}, the $R$-module
$0:_M (x_1^{a_1}, \ldots, x_d^{a_d})$ is minimally  generated by the images in $M \cong   M \otimes_R K_d $ of the $r$
elements 
\[
[(\mbox{\rm id}_{W_{\bullet}}\otimes_k \w{\nabla}_{\bullet}) \circ( \varphi_{\bullet}\otimes_R 
\mbox{\rm id}_{K_{\bullet}}) ]^d (w_{\ell}\otimes 1)\, ,
\]
where $1 \leq \ell \leq r={\rm dim}_k W_d$.
\end{Corollary}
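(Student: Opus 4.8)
The plan is to deduce Corollary~\ref{corollary1} as the instance $t=d$ of Theorem~\ref{closed_formula}, once two elementary identifications are in place. So the first step is to set $t=d$ in Theorem~\ref{closed_formula}: the hypothesis $(x_1^{a_1},\ldots,x_d^{a_d})\supset I_1(\varphi_t)$ becomes the containment $(x_1^{a_1},\ldots,x_d^{a_d})\supset I_1(\varphi_d)$ that is being carried over, $W_t$ becomes $W_d$ (so that the number of generators is $r={\rm dim}_k W_d$), and the iterated operator $[(\mbox{\rm id}_{W_\bullet}\otimes_k\w{\nabla}_\bullet)\circ(\varphi_\bullet\otimes_R\mbox{\rm id}_{K_\bullet})]^t$ becomes the $d$-fold iterate appearing in the statement.

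The second step is to identify the module of top Koszul cycles with the colon module. Since $K_\bullet=K_\bullet(x_1^{a_1},\ldots,x_d^{a_d};R)$ is built on $d$ elements, one has $K_d=\bigwedge^d R^d\cong R$ and $K_{d+1}=\bigwedge^{d+1}R^d=0$. The first identity gives the isomorphism $M\otimes_R K_d\cong M$ recorded in the statement of the corollary. The second shows that in homological degree $d$ there are no Koszul boundaries, so ${\mathcal Z}_d(x_1^{a_1},\ldots,x_d^{a_d};M)$ coincides with the top Koszul homology $H_d(K_\bullet(x_1^{a_1},\ldots,x_d^{a_d};M))$; and, under the identification $M\otimes_R K_d\cong M$, a $d$-cycle is exactly an element of $M$ killed by each $x_i^{a_i}$ — the standard computation of top Koszul homology — so ${\mathcal Z}_d(x_1^{a_1},\ldots,x_d^{a_d};M)=0:_M(x_1^{a_1},\ldots,x_d^{a_d})$. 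Feeding these two identifications into the conclusion of Theorem~\ref{closed_formula} yields the asserted minimal generating set of $0:_M(x_1^{a_1},\ldots,x_d^{a_d})$ verbatim.

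I do not expect a genuine obstacle here: Corollary~\ref{corollary1} is a direct specialization, and the only points that need a moment of care are bookkeeping ones — checking that the contraction $M\otimes_R K_d\cong M$ used to phrase the output of Theorem~\ref{closed_formula} is the same one named in the corollary, and noting that ``the assumptions of Theorem~\ref{closed_formula}'' are being invoked with the choice $t=d$. (When $M$ has projective dimension less than $d$ the map $\varphi_d$ carries no information, $W_d=0$ and $r=0$, and both sides of the asserted equality vanish, so the statement remains consistent.)
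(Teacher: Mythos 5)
Your proposal is correct and follows essentially the same route as the paper: identify $M\otimes_R K_d\cong M$, observe that under this identification the top Koszul cycles ${\mathcal Z}_d(x_1^{a_1},\ldots,x_d^{a_d};M)$ are exactly $0:_M(x_1^{a_1},\ldots,x_d^{a_d})$, and then quote Theorem~\ref{closed_formula} with $t=d$. The extra remark about vanishing boundaries is harmless but unnecessary, since the theorem is already stated for cycles rather than homology.
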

\begin{proof}
The identification $M \cong M\otimes_R K_d$ induces an isomorphism  between $0:_M (x_1^{a_1}, \ldots, x_d^{a_d})$ and  
${\mathcal Z}_d(x_1^{a_1}, \ldots, x_d^{a_d}; M)$. The assertion then follows from Theorem~\ref{closed_formula}. 
\end{proof}

\ms

When combined with Proposition~\ref{decomposition}, the corollary above provides an explicit minimal generating
set of the iterated socle $0:_M{\m}^s$ in the range $s \le {\text o}(I_1(\varphi_d))$.

\bs

We now provide a description of the generators of $0:_M (x_1^{a_1}, \ldots, x_d^{a_d})$ and, more generally, of the 
Koszul cycles  in terms of Jacobian determinants, as was done by Herzog \cite[Corollary 2]{He} when $a_1=\ldots=a_d=1$ 
and $M$ is a cyclic graded module. To do so we consider $M$ as a module over the subring $A=k[\![ y_1, \ldots,y_d]\!]\subset R$, 
where $y_i=x_i^{a_i}$ are homogenous of degree one. 
Applying Herzog's method directly would lead to generators of the Koszul cycles as $A$-modules, whereas using Theorem~\ref{closed_formula} 
above we obtain minimal generating sets as $R$-modules.

We use the assumptions and notations of Theorem~\ref{closed_formula}. Fixing bases of $W_i$ one obtains matrix representations 
$(\alpha^i_{\nu \mu})$ of the maps $\varphi_i$ in the resolution $F_{\bullet}$. Let ${\mathcal B} 
\subset {\mathbb Z}_{\geq 0}^d$ be the set of all tuples $\underline{\lambda}=(\lambda_1, \ldots, \lambda_d)$ with 
$\lambda_i < a_i$ and recall that $\underline{x}^{\underline{\lambda}}$, $\underline{\lambda} \in {\mathcal B}$, form an 
$A$-basis of $R$. For $\underline{\lambda}$ and $\underline{\varepsilon}$ in ${\mathcal B}$, we let $\{ \underline{\lambda}-
\underline{\varepsilon} \}$ be the tuple with
\[
\{ \underline{\lambda}-\underline{\varepsilon} \}_{{}_j} = \begin{cases} \lambda_j - \varepsilon_j & \text{if } 
\lambda_j-\varepsilon_j \geq 0 \\
\lambda_j - \varepsilon_j+a_j & \text{otherwise}.
\end{cases}
\]
Notice that $\{ \underline{\lambda}-\underline{\varepsilon}\}$ is again in ${\mathcal B}$. Moreover we define $\{\!\{ \underline{\lambda} - \underline{\varepsilon} \}\!\}$ by
\[
\{ \! \{ \underline{\lambda}-\underline{\varepsilon} \} \! \}_{{}_j} = \begin{cases} 0 & \text{if } 
\lambda_j-\varepsilon_j \geq 0 \\
1 & \text{otherwise}.
\end{cases}
\]
If $\beta$ is an element of $R$ we write 
$\beta = \displaystyle \sum_{\underline{\gamma} \in {\mathcal B}} \beta_{\underline{\gamma}} \underline{x}^{\underline{\gamma}}$, 
where $ \beta_{\underline{\gamma}} \in A$. Multiplication by $\beta$ gives an $A$-endomorphism of $R$, represented by a matrix 
$M_{\beta}$ with respect to the basis $\underline{x}^{\underline{\lambda}}$, $\underline{\lambda} \in {\mathcal B}$. Its
$(\underline{\lambda}, \underline{\varepsilon})$-entry is
\[
\beta_{\{\underline{\lambda} - \underline{\varepsilon}\} } \,  \underline{y}^{\{\!\{ \underline{\lambda} - \underline{\varepsilon} \}\!\}}.
\]
The $R$-resolution $F_{\bullet}$ of $M$ is also an $A$-resolution. As $A$-basis of $F_{\bullet}$ we choose the $k$-basis of 
$W_{\bullet}$ tensored with the basis $\underline{x}^{\underline{\lambda}}, \underline{\lambda} \in {\mathcal B}$.
To obtain a matrix representation $N_i$ of $\varphi_i$ with respect to this $A$-basis, we replace each $\alpha^i_{\nu \mu}$ 
by the matrix $M_{\alpha^i_{\nu \mu}}$. The $( \nu, \underline{\lambda}; \mu, \underline{\varepsilon})$-entry of $N_i$ is
\[
\alpha^i_{\nu \mu, \{  \underline{\lambda} - \underline{\varepsilon} \}} \, \underline{y}^{\{\!\{  
\underline{\lambda} - \underline{\varepsilon} \}\!\}}.
\]
These entries are in $A$, the power series ring in the variables $y_1, \ldots, y_d$, which all have degree $1$. We consider 
the degree $m$ component $\alpha^i_{\nu \mu, \{  \underline{\lambda} - \underline{\varepsilon} \}, m}$ of 
$\alpha^i_{\nu \mu, \{  \underline{\lambda} - \underline{\varepsilon} \}}$ and we notice that
\[
\alpha^i_{\nu \mu, \{  \underline{\lambda} - \underline{\varepsilon} \}, m} \, \underline{y}^{\{\!\{ \underline{\lambda} - 
\underline{\varepsilon} \}\!\}} 
\]
is homogeneous of degree $m+| \{\!\{ \underline{\lambda} - \underline{\varepsilon} \}\!\}|$.

\bs

\begin{Theorem}\label{closed_formula2}
With the assumptions of \,\text{\rm Theorem~\ref{closed_formula}}, the $R$-module ${\mathcal Z}_t(x_1^{a_1}, \ldots, 
x_d^{a_d}; M)$ is minimally generated by the images in $ M \otimes_R K_t $ of the $r$ elements
\[
\large\sum_{\substack{\nu_{t-1}, \ldots, \nu_0 \\ \underline{\lambda}_{t}, \ldots, \underline{\lambda}_0 \, \text{\rm in} \, {\mathcal B}
\\  m_i >0\\ j_1 < \ldots < j_t}} \
\frac{1}{\displaystyle \prod_{h=t}^1 \displaystyle\sum_{i=t}^{h} (m_i+|\{\!\{  \underline{\lambda}_{i-1} - \underline{\lambda}_i \}\!\}|)}
\ 
\left|
\frac{\partial (\alpha^i_{\nu_{i-1} \nu_i , \{ \underline{\lambda}_{i-1} - \underline{\lambda}_i \}, m_i} \, 
\underline{y}^{\{\!\{ \underline{\lambda}_{i-1} -  \underline{\lambda}_i \}\!\}}) }{\partial y_{j_{\mu}}} 
\right|_{\substack{1\leq i \leq t \\
1 \le \mu \leq t}} \underline{x}^{\underline{\lambda}_0} \otimes e_{j_1} \wedge \ldots \wedge e_{j_t}\, ,
\]
where $1 \leq \nu_t \leq r$.
\end{Theorem}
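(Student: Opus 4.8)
The plan is to combine the explicit generating set from Theorem~\ref{closed_formula} with the matrix description of $\varphi_{\bullet}$ over the subring $A$ that has just been set up, and then to unravel the formula for $\w{\nabla}_{\bullet}$ from Discussion~\ref{nabla}. Concretely, Theorem~\ref{closed_formula} tells us that the generators of $\mathcal{Z}_t(x_1^{a_1},\ldots,x_d^{a_d};M)$ are the images of
\[
[(\mbox{\rm id}_{W_{\bullet}}\otimes_k \w{\nabla}_{\bullet}) \circ( \varphi_{\bullet}\otimes_R \mbox{\rm id}_{K_{\bullet}}) ]^t (w_{\ell}\otimes 1),
\]
so the task is purely computational: iterate the composite $t$ times and track what happens to a basis element. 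First I would fix the $A$-basis of each $F_i$ given by $W_i$ tensored with the monomials $\underline{x}^{\underline{\lambda}}$, $\underline{\lambda}\in\mathcal{B}$, and recall from the preceding discussion that in this basis the map $\varphi_i$ is represented by the matrix with $(\nu,\underline{\lambda};\mu,\underline{\varepsilon})$-entry $\alpha^i_{\nu\mu,\{\underline{\lambda}-\underline{\varepsilon}\}}\,\underline{y}^{\{\!\{\underline{\lambda}-\underline{\varepsilon}\}\!\}}$, which further decomposes into homogeneous (with respect to the grading on $A$) pieces $\alpha^i_{\nu\mu,\{\underline{\lambda}-\underline{\varepsilon}\},m}\,\underline{y}^{\{\!\{\underline{\lambda}-\underline{\varepsilon}\}\!\}}$ of internal degree $m+|\{\!\{\underline{\lambda}-\underline{\varepsilon}\}\!\}|$.

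Next I would analyze one application of $(\mbox{\rm id}_{W}\otimes\w{\nabla})\circ(\varphi\otimes\mbox{\rm id}_K)$. Applying $\varphi_i\otimes\mbox{\rm id}$ to a basis vector replaces it by an $A$-linear combination of terms $r\cdot(\text{basis vector of }W\otimes\underline{x}^{\underline{\lambda}})\otimes v$ with $r=\alpha^i_{\nu\mu,\{\cdots\},m}\,\underline{y}^{\{\!\{\cdots\}\!\}}$, and then $\w{\nabla}$ — using the explicit description at the end of Discussion~\ref{nabla}, namely $\w{\nabla}(rv)=\sum_{i,m>0}\frac{1}{m+\ell}\frac{\partial r_m}{\partial x^{a_i}}e_i\wedge v$ — produces the wedge with a new $e_{j}$, the partial derivative $\partial/\partial y_j$ of the (already $A$-homogeneous) coefficient, and the denominator $m+\ell$, where $\ell$ is the current Koszul degree. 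Since after $h$ steps the Koszul degree is exactly $t-h$... actually one builds up from degree $0$, so after processing $F_t\to F_{t-1}\to\cdots$ down to $F_{t-h}$ the wedge length is $h$ and the internal degree accumulated is $\sum_{i=t}^{t-h+1}(m_i+|\{\!\{\underline{\lambda}_{i-1}-\underline{\lambda}_i\}\!\}|)$; I would verify this degree bookkeeping carefully since it is what produces the product of sums $\prod_{h=t}^{1}\sum_{i=t}^{h}(m_i+|\{\!\{\underline{\lambda}_{i-1}-\underline{\lambda}_i\}\!\}|)$ in the denominator. Iterating and expanding the sums over all intermediate indices $\nu_{t-1},\ldots,\nu_0$, all intermediate exponent tuples $\underline{\lambda}_t,\ldots,\underline{\lambda}_0\in\mathcal{B}$, all internal degrees $m_i>0$, and the ordered tuple $j_1<\cdots<j_t$ of wedge indices, the repeated partial differentiations against distinct $y_{j_\mu}$ assemble into the $t\times t$ Jacobian determinant displayed in the statement (the determinant arising because the $e_{j_\mu}\wedge$ operations antisymmetrize, exactly as in Herzog's argument), with residual factor $\underline{x}^{\underline{\lambda}_0}\otimes e_{j_1}\wedge\cdots\wedge e_{j_t}$.

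Finally, minimality of this generating set is inherited directly: Theorem~\ref{closed_formula} already asserts that these $r$ elements form a \emph{minimal} generating set of $\mathcal{Z}_t(x_1^{a_1},\ldots,x_d^{a_d};M)$ as an $R$-module, and the present theorem merely rewrites each generator in closed form, so no separate minimality argument is needed.

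The main obstacle I anticipate is purely organizational rather than conceptual: correctly matching the combinatorics of the iterated composite — in particular getting the denominators $\sum_{i=t}^{h}(m_i+|\{\!\{\underline{\lambda}_{i-1}-\underline{\lambda}_i\}\!\}|)$ right for each $h$, which requires pinning down precisely how the internal degree accumulates across the successive applications of $\w{\nabla}$ and how the $\frac{1}{m}$-normalization in the definition of $\w{\nabla}_{\bullet}$ interacts with the Koszul-degree shift in the formula $\w{\nabla}(rv)=\sum\frac{1}{m+\ell}\cdots$. One must also be careful that $\w{\nabla}$ acts only on the component in ${K_{\bullet}}_{>0}$, so terms with $m_i=0$ are correctly excluded (hence the constraint $m_i>0$), and that the partial derivative $\partial/\partial y_{j}$ applied to $\alpha^i_{\nu\mu,\{\cdots\},m}\,\underline{y}^{\{\!\{\cdots\}\!\}}$ is interpreted via the operator $\partial/\partial x^{a_j}$ of Discussion~\ref{nabla}, acting on an element already homogeneous in the $y$'s. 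Once this bookkeeping is laid out cleanly — ideally by induction on the number of applied factors, tracking a single monomial summand — the determinant falls out by the same antisymmetrization used by Herzog.
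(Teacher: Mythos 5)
Your proposal is correct and follows essentially the same route as the paper: reduce to Theorem~\ref{closed_formula} (so minimality is inherited for free), re-expand each step over the $A$-basis $\underline{x}^{\underline{\lambda}}$, $\underline{\lambda}\in\mathcal{B}$, so that the iterated composite is computed with coefficients in $A=k[\![y_1,\ldots,y_d]\!]$, and let the Herzog-style antisymmetrization produce the Jacobian determinants together with the accumulated-degree denominators. The one point you should state explicitly (it is the crux of the paper's half-paragraph proof) is that this antisymmetrization is legitimate only after the transfer to $A$, i.e.\ via $\w{\nabla}_{\bullet}=V\otimes_k\w{d}_{\bullet}$ and $\varphi_{\bullet}\otimes_R\mathrm{id}_{K_{\bullet}}=\varphi_{\bullet}\otimes_A\mathrm{id}_{L_{\bullet}}$, because there the homotopy comes from the de Rham differential, which — unlike $\nabla_{\bullet}$, a mere connection — is a derivation whose operators $\partial/\partial y_j$ commute.
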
 
\begin{proof}
We claim that the displayed elements in the current theorem and in Theorem~\ref{closed_formula} are equal for $\nu_t=\ell$. 
To prove this we make use of Discussion~\ref{nabla} and the notation introduced there.  In particular, let  $A$ be the subring 
$k[\![ y_1, \ldots, y_d]\!]$ of $R$ with $y_i=x_i^{a_i}$, $V$ the $k$-vector space spanned by the monomial basis of $R$ as an $A$-module, $L_{\bullet}$ the 
Koszul complex $K_{\bullet}(y_1, \ldots, y_d; A)$, $d_{\bullet}$ the de Rham differential of $L_{\bullet}$, and $\w{d}_{\bullet}$ 
the contracting homotopy derived from it. Recall that 
$K_{\bullet} \cong V \otimes_k L_{\bullet}$  and  $\w{\nabla}_{\bullet}=V \otimes_k  \w{d_{\bullet}}$. It follows that 
\[
[(\mbox{\rm id}_{W_{\bullet}}\otimes_k \w{\nabla}_{\bullet}) \circ( \varphi_{\bullet}\otimes_R \mbox{\rm id}_{K_{\bullet}}) ]^t (w_{\nu_t}\otimes 1)= 
[(\mbox{\rm id}_{(W_{\bullet} \otimes_k V) }  \otimes_k \w{d}_{\bullet}) \circ( \varphi_{\bullet}\otimes_A \mbox{\rm id}_{L_{\bullet}}) ]^t (w_{\nu_t}\otimes 1)\, .
\]
Now, to compute the element on the righthand  side of the equation we may consider $(F_{\bullet}, \varphi_{\bullet})$ as an $A$-resolution 
of $M$ with $F_i=W \otimes_k V \otimes_k A$. This element coincides with the one in the current theorem because $d_{\bullet}$, unlike 
$\nabla_{\bullet}$, is a derivation and the operators $\frac{\partial}{\partial y_1}, \ldots,  \frac{\partial}{\partial y_d}$ commute with each other. 
\end{proof}

\bs

\begin{Corollary}\label{corollary2}
With the assumptions of \,\text{\rm Theorem~\ref{closed_formula}}, the $R$-module $0 :_M (x_1^{a_1}, \ldots, x_d^{a_d})$ 
is minimally generated by the images in $ M $  of the $r$ elements
\[
\sum_{\substack{\nu_{d-1}, \ldots, \nu_0 \\ \underline{\lambda}_{d}, \ldots, \underline{\lambda}_0 \, \text{\rm in} \, {\mathcal B}
\\  m_i >0}} \
\frac{1}{\displaystyle \prod_{h=d}^1 \displaystyle\sum_{i=d}^{h}(m_i+|\{\!\{  \underline{\lambda}_{i-1} - \underline{\lambda}_i \}\!\}|)}
\ 
\left|
\frac{\partial (\alpha^i_{\nu_{i-1} \nu_i , \{ \underline{\lambda}_{i-1} - \underline{\lambda}_i \}, m_i} \, 
\underline{y}^{\{\!\{ \underline{\lambda}_{i-1} -  \underline{\lambda}_i \}\!\}}) }{\partial y_j}
\right|_{\substack{1\leq i \leq d \\
1 \leq j \leq d}} \underline{x}^{\underline{\lambda}_0}\, ,
\]
where $1 \leq \nu_d \leq r = \text{\rm rank}\, W_d$.
\end{Corollary}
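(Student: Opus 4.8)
The plan is to obtain Corollary~\ref{corollary2} from Theorem~\ref{closed_formula2} in precisely the way Corollary~\ref{corollary1} follows from Theorem~\ref{closed_formula}: specialize the description of the Koszul cycle module $\mathcal{Z}_t(x_1^{a_1},\ldots,x_d^{a_d};M)$ to homological degree $t=d$, and transport it across a rank-one identification to a statement about the colon $0:_M(x_1^{a_1},\ldots,x_d^{a_d})$.

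First I would invoke the identification already used in the proof of Corollary~\ref{corollary1}. Since $K_{\bullet}=K_{\bullet}(x_1^{a_1},\ldots,x_d^{a_d};R)$ is the Koszul complex on $d$ elements, $K_d$ is free of rank one with basis element $e_1\wedge\cdots\wedge e_d$, and its top differential sends this generator to $\sum_{i=1}^d(-1)^{i-1}x_i^{a_i}\,e_1\wedge\cdots\wedge\widehat{e_i}\wedge\cdots\wedge e_d$. Hence, under the isomorphism $M\otimes_R K_d\cong M$ carrying $e_1\wedge\cdots\wedge e_d$ to $1$, an element $m\otimes(e_1\wedge\cdots\wedge e_d)$ is a Koszul cycle exactly when $x_i^{a_i}m=0$ for every $i$; that is, $\mathcal{Z}_d(x_1^{a_1},\ldots,x_d^{a_d};M)$ is carried onto $0:_M(x_1^{a_1},\ldots,x_d^{a_d})$. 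This isomorphism takes a minimal generating set of the one module to a minimal generating set of the other, so it suffices to read off the $t=d$ case of Theorem~\ref{closed_formula2} through this correspondence.

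Next I would set $t=d$ in Theorem~\ref{closed_formula2}. Then the condition $j_1<\cdots<j_t$ has the unique solution $j_\mu=\mu$ for $1\le\mu\le d$, so the sum over the $j$'s collapses to a single term: the Jacobian matrix $\bigl(\partial(\cdots)/\partial y_{j_\mu}\bigr)_{1\le i,\mu\le t}$ becomes the full $d\times d$ matrix $\bigl(\partial(\cdots)/\partial y_j\bigr)_{1\le i,j\le d}$, and the exterior factor $e_{j_1}\wedge\cdots\wedge e_{j_d}$ equals $e_1\wedge\cdots\wedge e_d$, which maps to $1$ under $M\otimes_R K_d\cong M$. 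Renaming the running index $\nu_t$ as $\nu_d$ (with $1\le\nu_d\le r={\rm rank}\,W_d$) turns the displayed generators of Theorem~\ref{closed_formula2} into exactly the displayed elements of the corollary, and by the identification of the previous paragraph their images in $M$ minimally generate $0:_M(x_1^{a_1},\ldots,x_d^{a_d})$.

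The main point here is not an obstacle so much as careful bookkeeping: essentially all of the content already lives in Theorem~\ref{closed_formula2}. The one thing to verify is that the sign conventions of the Koszul differential, the collapse of the index set $\{j_1<\cdots<j_d\}$, and the isomorphism $M\otimes_R K_d\cong M$ are mutually compatible with the matching of $\mathcal{Z}_d$ with $0:_M(x_1^{a_1},\ldots,x_d^{a_d})$ used for Corollary~\ref{corollary1}, so that the minimal generating set transports as claimed.
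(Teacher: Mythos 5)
Your proposal is correct and matches the intended argument: the paper gives no separate proof for Corollary~\ref{corollary2} precisely because it follows from Theorem~\ref{closed_formula2} at $t=d$ exactly as Corollary~\ref{corollary1} follows from Theorem~\ref{closed_formula}, via the identification $M\otimes_R K_d\cong M$ carrying ${\mathcal Z}_d(x_1^{a_1},\ldots,x_d^{a_d};M)$ onto $0:_M(x_1^{a_1},\ldots,x_d^{a_d})$. Your bookkeeping (collapse of $j_1<\cdots<j_d$ to $j_\mu=\mu$ and $e_1\wedge\cdots\wedge e_d\mapsto 1$) is exactly what is needed.
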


\ms

\begin{Remark}\label{Hyperplane}{\rm 
Let $R= k[x_1,\ldots, x_d]$ be a polynomial ring over an algebraically closed field $k$ of characteristic zero, and write $\m$ for the
homogeneous maximal ideal. Let $I$ be a homogeneous prime ideal, and let $x\in R_1$ be a general linear form. Fix a minimal 
homogeneous free resolution $(F_{\bullet},\varphi_{\bullet})$ of $R/I$ over $R$. If $I_1(\varphi_{d-1})\subset \m^{{\rm reg}(A)-1}$, then 
Proposition~\ref{decomposition} and
Corollaries~\ref{corollary1} and~\ref{corollary2} can be used to give explicit generators
of the saturated ideal $(I,x):\m^{\infty}$ of the hyperplane section $V(I)\cap V(x)$ in ${\mathbb P}^{d-1}_k$. The point is that this saturation is of the form
$(I,x):\m^s$ for an $s$ that is small enough to apply the full strength of our results. 

To see this, we write $(R',\m') = (R/Rx,\m/Rx)$, $A = R/I$ and
$A'= A/Ax.$ Tensoring $F_{\bullet}$ with $R'$ one obtains a  minimal 
homogeneous free resolution $(F'_{\bullet},\varphi'_{\bullet})$ of $A'$ over $R'.$
Let $\overline{A}$ denote the integral closure of $A$. Notice that $\overline{A}/A$ is a graded $A$-module concentrated in positive degrees,
because $A$ is a positively graded domain over an algebraically closed field. One sees that $H^1_{\m}(A)\cong H^0_{\m}(\overline{A}/A).$ On the
other hand, $H^0_{\m}(A')$ embeds into $H^1_{\m}(A)(-1).$ We deduce that $H^0_{\m}(A')$ is concentrated in degrees $i$, where $2\leq i\leq {\rm reg}(A')={\rm reg}(A)$.
Therefore $\m^{{\rm reg}(A)-1}H^0_{\m}(A') = 0,$ which gives $0:_{A'}(\m')^{\infty} = 0:_{A'}(\m')^{{\rm reg}(A)-1}.$  
On the other hand, ${{\rm reg}(A)-1} \leq \text{o}(I_1(\varphi'_{d-1}))$ by our assumption. Thus 
Proposition~\ref{decomposition} and Corollaries~\ref{corollary1} and~\ref{corollary2} can be
applied to yield the generators of the saturation $0:_{A'}({\m'})^{\infty}.$
 
Such examples occur ``in nature", for example for projective varieties of almost minimal degree which are not Cohen-Macaulay \cite{LeePark}.}
\end{Remark}

\ms

There is a related construction based on differentiating matrices in free resolutions that has been used in the definition of Atiyah 
classes and characteristic classes of modules, see for instance \cite{ALJ}. With $R$, $M$, $(F_{\bullet}, \varphi_{\bullet})$, and 
$W_{\bullet}$ as in Theorem~\ref{closed_formula},  apply the universally finite derivation to the entries of each $\varphi_i$ to obtain an $R$-linear map
\[ 
F_i \otimes_R \bigwedge^{d-i} \Omega_{R/k} \longrightarrow F_{i-1} \otimes_R \bigwedge^{d-i+1} \Omega_{R/k} \, .
\] 
Composing these maps and projecting $F_0$ onto $M$ yields an $R$-linear map 
\[ \Psi: F_d \longrightarrow M \otimes_R \bigwedge^{d} \Omega_{R/k} \, , 
\]
whose class in ${\rm Ext}_R^d(M, M \otimes_R \bigwedge^d \Omega_{R/k})$ only depends on $M$, see \cite[2.3.2]{ALJ}. After identifying $M \otimes_R \bigwedge^{d} \Omega_{R/k}$ with $M$, we consider the image of $\Psi$ as a submodule of $M$. It is natural to try to relate this submodule to the socle of $M$. Indeed, ${\rm im} \Psi = 0:_M \m$ if $M=R/I$ is a complete intersection and $F_{\bullet}$ is the Koszul complex with its natural bases. This is not true in general however. For instance, let $R= k[\![ x, y]\!]$ be a power series ring in the variables $x,y$ over the field $k$ of characteristic zero and consider the free resolution 
\[F_{\bullet}: 0 \longrightarrow R^2 \stackrel{\varphi_2}{\longrightarrow} R^3 \stackrel{\varphi_1}{\longrightarrow} R 
\]
with 
\[
\varphi_2= \left[
\begin{array}{cc}
x^2 & 0 \\
y^2 & x^4\\
0 & x^2+y^3
\end{array} \right]
\qquad \mbox{and}\qquad
\varphi_1 = \left[
\begin{array}{ccc}
x^2y^2+y^5 & -x^4-x^2y^3 & \, x^6
\end{array} \right] \, .
\]
%In this case ${\rm im} \Psi $ is generated by the images in $M=R/I=H_0(F_{\bullet})$ of the two elements $x^5y^2$ and $7xy^4+6x^3y$. The first element is in $I:\m$, but the second is not. In fact, the second element is not even integral over $I$, whereas $I:\m \subset \overline{I}$ according to Theorem~\ref{main_reduction_1} for instance. 

In this case ${\rm im} \Psi $ is generated by the images in $M=R/I=H_0(F_{\bullet})$ of the two elements $x^5y^2$ and $7xy^4+6x^3y$. 
The first element is in $I:\m$, but the second is not. In fact, the second element is not even integral over $I$, whereas $I:\m \subset \overline{I}$ according to Theorem~\ref{main_reduction_1} for instance.
% The first element is in $I:\m$, but the second is not. In fact, $I:\m \subset \overline{I}$ according to Theorem~\ref{main_reduction_1}, whereas $7xy^4+6x^3y$ is not integral over $I$. 
%
To see that $7xy^4+6x^3y$ is not integral over $I$, we give a grading to $R$ by assigning to $x$ degree 3 and to $y$ degree 2. Now $7xy^4+6x^3y$ is homogeneous of degree 11 and $I$ is generated by the homogenous element $x^2y^2+y^5$ of degree 10 and two other homogeneous elements of degrees 12 and 18. Thus if $7xy^4+6x^3y$ were integral over $I$, it would be integral over the principal ideal generated by $x^2y^2+y^5$, and hence would be contained in this ideal, which is not the case.

\bs

\section{Applications to determinantal ideals}

The formulas of  Corollaries~\ref{corollary1} and \ref{corollary2} are somewhat  daunting.  
Nevertheless, they suffice to give strong restrictions on where iterated socles of ideals of  
minors of matrices can sit. Such restrictions also hold for ideals of minors of symmetric matrices 
and of Pfaffians. In particular, this  applies to any height two ideal in a power series ring 
in two variables. In this case the formulas for iterated socles become very simple. 
The generators can be expressed in terms of determinants of the original presentation matrix. 
All these issues will be discussed in the present section.

\smallskip
\begin{Theorem}\label{determinatal} 
Let $(R,\m)$ be a regular local ring containing a field of characteristic zero, let $1\le n \le \ell \le m $ be integers, let $I=I_n(\phi)$ be the ideal generated by 
the $n \times n $ minors of  an $\ell \times m$ matrix $\phi$ with entries in $\m^s$, and assume that $I$ 
has height at least $(\ell-n+1)(m-n+1)$. One has 
\[
%I : \m^s \subset I_{n-\lceil\frac{s}{t}\rceil}(\phi) \, .
I : \m^s \subset I_{n-1}(\phi) \, .
\]
\end{Theorem}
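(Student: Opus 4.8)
The goal is to show $I:\m^s \subset I_{n-1}(\phi)$ when $I = I_n(\phi)$ is generically perfect (height equal to the expected value $(\ell-n+1)(m-n+1)$) with entries in $\m^s$. The plan is to reduce to the generic case and then apply the formula from Corollary~\ref{corollary2}. First I would pass to the generic determinantal situation: let $\Phi = (X_{ij})$ be an $\ell\times m$ matrix of indeterminates over the field $k$, set $\mathcal{R}=k[\![\{X_{ij}\}]\!]$ (or a suitable localization), and let $\mathcal{I} = I_n(\Phi)$. Since $I$ has the expected height, the specialization $X_{ij}\mapsto (\text{entry of }\phi)$ is a specialization of a perfect ideal to a perfect ideal of the same grade, so the Eagon--Northcott (or Lascoux/$\mathrm{GL}$-equivariant) minimal free resolution of $\mathcal{R}/\mathcal{I}$ specializes to a minimal free $R$-resolution $(F_\bullet,\varphi_\bullet)$ of $R/I$; this is standard (genericity of the resolution). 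The key point I need from this is: the ideal of entries of the \emph{last} map $\varphi_d$ in the resolution of $R/I$ is contained in $I_{n-1}(\phi)$, because in the generic resolution the entries of every $\varphi_i$ are linear combinations of the $X_{ij}$ with constant coefficients — more precisely, by $\mathrm{GL}$-equivariance the entries of each map lie in the degree-$1$ strand, so after specialization the entries of $\varphi_d$ lie in $I_1(\phi)\cdot(\text{stuff})$... wait, that is not quite enough. Let me instead argue more directly on the socle side.

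The cleaner route: by Corollary~\ref{corollary2} (applicable since $\phi$ having entries in $\m^s$ forces $I\subset\m^{ns}$, hence $\mathrm{o}(I_1(\varphi_d))$ is large — in fact the entries of $\varphi_d$ lie in $\m^s$ since the resolution is "linear in the $X_{ij}$'s" and $X_{ij}\mapsto \m^s$; thus $s\le \mathrm{o}(I_1(\varphi_d))$), the generators of $I:\m^s = 0:_{R/I}\m^s$ are, up to the identification $M\otimes_R K_d\cong M$, certain $d\times d$ Jacobian-type determinants built from the entries $\alpha^i_{\nu\mu}$ of the matrices $\varphi_i$ in the minimal free resolution, differentiated with respect to the variables. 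Since each entry $\alpha^i_{\nu\mu}$ of $\varphi_i$ is (by genericity, i.e. pulled back from the $\mathrm{GL}$-equivariant generic resolution) a $k$-linear combination of the entries $\phi_{pq}$ of $\phi$, each partial derivative $\partial \alpha^i_{\nu\mu}/\partial x_j$ is a $k$-linear combination of the $\partial\phi_{pq}/\partial x_j$, and — crucially — in the generic resolution each structure map $\varphi_i$ for $i<d$ already has entries that are $k$-linear in the $X_{pq}$ while $\varphi_d$ has entries that are, after specialization, \emph{actual entries of }$\phi$ (or $k$-combinations thereof), so the only "undifferentiated" factors in the determinant formula are again $k$-linear in $\phi$. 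So I would expand the determinant from Corollary~\ref{corollary2}: it is an alternating sum of products of $d$ terms, at least one of which comes from the $\varphi_d$ row/column and hence contributes (after specialization) a $k$-linear combination of entries $\phi_{pq}$, each of which lies in $\m^s$, while the remaining $d-1$ factors are derivatives $\partial(\,\cdot\,)/\partial y_j$ of degree-$1$-in-$X$ expressions — after specialization, each such factor is a $k$-linear combination of partials $\partial\phi_{pq}/\partial x_j$.

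Now I invoke the combinatorial/linkage fact about generic determinantal ideals: for the generic matrix, $\partial X_{pq}/\partial X_{rt} = \delta$'s, and the resulting product — one entry of $\phi$ times $d-1$ "derivatives of entries of $\phi$" — can be recognized, via the (generalized) Laplace/Jacobi expansion or via Herzog's original determinantal identity in \cite[Corollary 2]{He} applied in the generic ring, as lying in the ideal of $(n-1)\times(n-1)$ minors of the matrix, and this containment is preserved under specialization. Equivalently, I would run the whole argument \emph{first} in the generic ring $\mathcal{R}$, where $I_1(\varphi_d) = \mathcal{I}$ (the resolution of a generic determinantal ideal is, up to the last map, "linear", and $I_1(\varphi_d)$ is well understood — e.g. it is $\mathcal{I}$ itself up to radical, or at least $I_1(\varphi_d)\subset I_{n-1}(\Phi)$), show $\mathcal{I}:\mathfrak{m}_{\mathcal R}^s \subset I_{n-1}(\Phi)$ there by the formula, and then specialize: the generators of $0:_{R/I}\m^s$ are specializations of the generators of $0:_{\mathcal{R}/\mathcal{I}}\mathfrak{m}_{\mathcal R}^s$ (again because the resolution specializes), and $I_{n-1}(\Phi)$ specializes into $I_{n-1}(\phi)$. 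Actually the simplest statement of the needed generic fact is: \emph{in the minimal free resolution of a generic determinantal ideal $I_n(\Phi)$ of expected grade, the ideal of entries $I_1(\varphi_d)$ of the last differential is contained in $I_{n-1}(\Phi)$}; granting this, and granting that $\varphi_d$'s entries lie in $\m^s$ after specialization so that Corollary~\ref{corollary2} applies, every generator of $I:\m^s$ produced by that Corollary has a factor that is (a $k$-combination of) an entry of $\varphi_d$ — hence lies in $I_1(\varphi_d)\subset I_{n-1}(\phi)$ — times other things, so $I:\m^s\subset I_{n-1}(\phi)$.

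\textbf{Main obstacle.} The hard part will be justifying that the Jacobian-determinant generators from Corollary~\ref{corollary2} in fact land in $I_{n-1}(\phi)$ rather than merely in $I_1(\varphi_d)$: one must control \emph{which} map's entries appear undifferentiated versus differentiated in the expansion of the $d\times d$ determinant, and then identify the resulting expressions — products of one entry and several derivatives of entries of the generic matrix, summed with signs — with elements of $I_{n-1}(\Phi)$. This is where Herzog's determinantal identity (his Corollary 2 and the computation showing maximal-minor socles land in the next-lower minors) has to be adapted from the $a_i=1$, cyclic, graded case to the present setting; the equivariance of the Lascoux resolution of the generic determinantal ideal is what makes the bookkeeping tractable, and reducing to the generic case at the very start is what lets us use it. I would also need the elementary input that $\phi$ having entries in $\m^s$ forces the entries of $\varphi_d$ into $\m^s$ (so that $s\le\mathrm{o}(I_1(\varphi_d))$ and Corollary~\ref{corollary2} is legitimately available), which again follows from the linearity-in-$X$ of the generic resolution together with the specialization $X_{ij}\in\m^s$.
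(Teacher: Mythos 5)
There is a genuine gap. Your argument hinges on applying Corollary~\ref{corollary2} over $R$ to the specialized generic resolution and then showing that the resulting Jacobian determinants lie in $I_{n-1}(\phi)$ --- but this is precisely the step you leave open (your ``main obstacle''), and the structural facts you invoke to bridge it are false. The Lascoux/Eagon--Northcott differentials are not all linear in the $X_{ij}$: $\varphi_1$ has entries of degree $n$, and for non-maximal minors higher differentials also have entries of degree $>1$ (for $I_2$ of a generic $3\times 3$ matrix the last map is quadratic by self-duality); likewise the claimed generic containment $I_1(\varphi_d)\subset I_{n-1}(\Phi)$ fails whenever $n\ge 3$, since linear entries cannot lie in an ideal generated in degree $n-1\ge 2$. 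You also misread the formula: in Corollary~\ref{corollary2} \emph{every} entry of the $d\times d$ determinant is differentiated with respect to $y_j=x_j^{a_j}$, so no undifferentiated ``entry of $\varphi_d$'' factor survives, and after decomposing the $\alpha^i_{\nu\mu}$ over the monomial basis and into graded pieces there is no evident reason the products lie in $I_{n-1}(\phi)$: derivatives with respect to the $x$'s produce factors like $\partial \phi_{pq}/\partial x_j$ that have nothing to do with minors. Finally, the fallback of computing ${\mathcal I}:{\mathfrak m}_{\mathcal R}^s$ in the generic ring and specializing cannot work: ${\mathcal R}/{\mathcal I}$ has positive depth, so ${\mathcal I}:{\mathfrak m}_{\mathcal R}^s={\mathcal I}$, and colon ideals (iterated socles) simply do not specialize along $X_{ij}\mapsto \phi_{ij}$; also note that Corollary~\ref{corollary2} computes $0:_M(x_1^{a_1},\ldots,x_d^{a_d})$, so Proposition~\ref{decomposition} is needed even to reach ${\mathfrak m}^s$.

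The paper avoids all of this by keeping the generic entries as variables of the ambient ring instead of specializing first: it sets $S=R[\![\{y_{ij}\}]\!]$, $J=I_n(Y)$, so that $R\cong S/I_1(Y-\phi)$ with $Y-\phi$ a regular sequence on $S/J$ and $F_\bullet\otimes_S R$ a minimal resolution of $R/I$ with entries in $I_1(\phi)\subset{\mathfrak m}^s$. After reducing to the ${\mathfrak m}$-primary case, Proposition~\ref{decomposition} reduces the claim to bounding the Koszul cycles ${\mathcal Z}_d(\underline{x}^{\underline a};R/I)$, and by Discussion~\ref{staircase} these are images of cycles of $K_\bullet(\underline{x}^{\underline a},\underline{y};S/J)$ (using $I_1(\phi)\subset(\underline{x}^{\underline a})$ to replace $\underline{y}-\underline{\phi}$ by $\underline{y}$). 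Applying Theorem~\ref{closed_formula} over $S$, where $I_1(\varphi_t)\subset(\underline y)$, only the last step of the iteration matters: $\varphi_1\otimes\mathrm{id}$ lands in $I_n(Y)\cdot K_{t-1}$, and $\widetilde{\nabla}$, whose partials are $\partial/\partial y_{ij}$ together with the $k[\![\underline y]\!]$-linear operators $\partial/\partial x_i^{a_i}$, sends $I_n(Y)$ into $I_{n-1}(Y)$; specializing gives $I:(x_1^{a_1},\ldots,x_d^{a_d})\subset I_{n-1}(\phi)$. This realizes your correct intuition --- differentiating minors with respect to the matrix entries drops the size --- but only in a setting where the contracting homotopy genuinely differentiates with respect to those entries, which your set-up over $R$ does not provide.
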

\begin{proof}
Recall that $I$ is a perfect ideal of grade $(\ell-n+1)(m-n+1)$ according to 
\cite{HE}. 
We may assume that $I$ is $\m$-primary and $s\ge 1$. After completing we may 
further suppose that $R=k[\![x_1, \ldots, x_d]\!]$ is a power series ring  in the variables $x_1, \ldots, x_d$  over a field $k$ 
of characteristic zero. 
%We will prove that $I \colon {\mathfrak m}^s \subset I_{n-1}(\phi)$ if $t\geq s$. The general case 
%then follows by repeated applications of this formula.

Now let $Y=(y_{ij})$ be an $\ell \times m$ matrix of variables over $R$, write $S=R[\![\{y_{ij}\}]\!]$, and let $J=I_n(Y)$ be the 
$S$-ideal generated by the $n\times n$ minors of $Y$. We consider a minimal free $S$-resolution $F_{\bullet}$ of $S/J$. 
Since $J$ is extended from an ideal in the ring $k[\![\{y_{ij}\}]\!]$, all matrices of $F_{\bullet}$ have  entries in the $S$-ideal 
$I_1(Y)$ generated by the entries of $Y$.  On the other hand, $J$ is perfect of grade $(\ell-n+1)(m-n+1)$. Hence, regarding $R$ as an $S$-module via the identification $R\cong S/I_1(Y-\phi)$, we see that $F_{\bullet} \otimes_S R$ 
is a minimal free $R$-resolution of $R/I$ and the entries of $Y-\phi$ form a regular sequence on $S/J$. Notice that all matrices 
of the resolution $F_{\bullet} \otimes_S R$ have  entries in the $R$-ideal $I_1(\phi) \subset \m^s$. Thus according to 
Proposition~\ref{decomposition} the assertion of the present theorem follows once we have shown that 
$I : (x_1^{a_1}, \ldots, x_d^{a_d}) \subset I_{n-1}(\phi)$ whenever $|\underline{a}|=s+d-1$. The latter amounts to proving that  
the module of Koszul cycles ${\mathcal Z}_d(x_1^{a_1}, \ldots, x_d^{a_d}; R/I)$ is contained in $I_{n-1}(\phi)\cdot K_d(x_1^{a_1}, \ldots, x_d^{a_d};R/I)$. 
This is a consequence of the next, more general result.
\end{proof}

\smallskip
\begin{Proposition} 
In addition to the assumptions of \ \/{\rm Theorem~\ref{determinatal}} suppose that $R=k[\![x_1, \ldots, x_d]\!]$ is  a power series 
ring in the variables $x_1, \ldots, x_d$ over a field $k$ of characteristic zero.
% and that $t \geq s$. 
If $a_i$ are positive integers 
with $\sum_{i=1}^d a_i\le s+d-1$, then 
 \[
 {\mathcal Z}_{t}(x_1^{a_1}, \ldots, x_d^{a_d}; R/I) \subset I_{n-1}(\phi)\cdot K_{t}(x_1^{a_1}, \ldots, x_d^{a_d};R/I)\, 
 \] 
 for every $t >0$. 
\end{Proposition}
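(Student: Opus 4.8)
The plan is to use the explicit generators of the Koszul cycles ${\mathcal Z}_t(x_1^{a_1}, \ldots, x_d^{a_d}; R/I)$ furnished by Theorem~\ref{closed_formula2}, together with the fact, used in the proof of Theorem~\ref{determinatal}, that a minimal free $R$-resolution $(F_\bullet, \varphi_\bullet)$ of $R/I$ can be obtained by specializing the generic resolution: all matrices $\varphi_i$ have entries in the ideal $I_1(\phi)$. First I would fix such a resolution and bases of the free modules $W_i$, so that the matrix entries $\alpha^i_{\nu\mu}$ all lie in $I_1(\phi)$. Then I would inspect the formula in Theorem~\ref{closed_formula2}: every generator of ${\mathcal Z}_t$ is an $R$-linear combination of $t\times t$ Jacobian determinants whose $(i,\mu)$-entry is $\partial(\alpha^i_{\nu_{i-1}\nu_i, \{\cdots\}, m_i}\, \underline{y}^{\{\!\{\cdots\}\!\}})/\partial y_{j_\mu}$, with the indices $i$ running over $1, \ldots, t$. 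The key point is that the $i$-th \emph{row} of such a determinant consists of partial derivatives of (a homogeneous component of) an entry of $\varphi_i$, hence — since $I_1(\phi)$ is generated by elements $x_1^{a_1'}\cdots$, no, more simply since each entry of $\varphi_i$ lies in $I_1(\phi)$, a proper ideal of $R$, and $\partial/\partial y_{j_\mu}$ is a derivation — each entry of the $i$-th row lies in $I_1(\phi)$ provided $\partial/\partial y_{j_\mu}$ does not drop us out of $I_1(\phi)$.

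The subtle point, and the step I expect to be the main obstacle, is precisely this: differentiation can take an element of $I_1(\phi)$ out of $I_1(\phi)$ (e.g. $\partial(y^2)/\partial y = 2y$, which is fine, but $\partial(y)/\partial y = 1$ is not). The hypothesis $\sum a_i \le s+d-1$ together with the assumption that $\phi$ has entries in $\m^s$ is what saves us. I would argue as follows. Each entry of $\varphi_i$, viewed as an element of $A = k[\![y_1,\ldots,y_d]\!] \subset R$ where $y_i = x_i^{a_i}$, actually lies in $I_1(\phi) \cap A$; but more is true — tracking orders, an entry of $\varphi_i$ lies in $\m^{\,s}$ as an element of $R$ (the presentation matrix $\phi$ has entries in $\m^s$, and the higher matrices in a minimal resolution of a specialized generic determinantal ideal can be taken with entries that are \emph{linear} in the entries of $\phi$, hence still in $\m^s$). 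Since $y_i = x_i^{a_i}$ and $\sum a_i \le s + d - 1 < s + d$, a monomial $\underline{y}^{\underline{b}}$ of $\underline y$-degree $m$ has $\underline x$-order $\sum b_i a_i$, and after one differentiation $\partial/\partial y_j$ the resulting monomial still has positive $\underline y$-degree as long as $m \ge 2$, hence positive $\underline x$-order; and the boundary case $m=1$ is handled because an entry of $\varphi_i$ of $\underline x$-order $\ge s$ cannot be a nonzero scalar multiple of a single $y_j = x_j^{a_j}$ unless $a_j \ge s$, but then $\sum_{i} a_i \ge a_j + (d-1) \ge s + d - 1$ forces $a_j = s$ and all other $a_i = 1$, a case one checks directly. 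The clean way to package this is: every entry of every $\varphi_i$ lies in $I_1(\phi)$, and for each variable $y_j$ one has $\partial/\partial y_j \big(I_1(\phi)\big) \subseteq I_1(\phi) + R\cdot(\text{something of }\underline x\text{-order} \ge s - a_j + a_j) = $ ... — I would instead just verify directly that $\partial(\beta \underline y^{\underline c})/\partial y_j \in I_1(\phi) \cdot R$ whenever $\beta \in \m^s$, $|\underline c| \ge 1$, and $\sum a_i \le s+d-1$, by a short order estimate.

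Granting the row estimate, the conclusion is immediate: expanding the $t\times t$ Jacobian determinant along, say, its first row (or by the Leibniz formula), every term is an element of $I_1(\phi)$ times a $(t-1)\times(t-1)$ minor, hence the whole determinant lies in $I_1(\phi) \cdot R$; and since $1 \le n \le \ell \le m$ forces $n-1 \ge 0$ and $I_1(\phi) \subseteq I_{n-1}(\phi)$ when $n \ge 2$ (while for $n=1$ one has $I = I_1(\phi)$ and the statement $I:\m^s \subset I_0(\phi) = R$ is vacuous, so we may assume $n\ge 2$), the coefficients lie in $I_{n-1}(\phi)$. Therefore each generator of ${\mathcal Z}_t(x_1^{a_1},\ldots,x_d^{a_d};R/I)$ given by Theorem~\ref{closed_formula2} is an $R/I$-linear combination of basis elements $\underline{x}^{\underline{\lambda}_0}\otimes e_{j_1}\wedge\cdots\wedge e_{j_t}$ with coefficients in $I_{n-1}(\phi)\cdot(R/I)$, which is exactly the assertion ${\mathcal Z}_t \subseteq I_{n-1}(\phi)\cdot K_t(x_1^{a_1},\ldots,x_d^{a_d};R/I)$. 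The only genuine work is the order bookkeeping in the middle paragraph; everything else is formal manipulation of the formula already established.
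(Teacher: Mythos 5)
There is a genuine gap, and it sits exactly where the determinantal structure has to enter. Your final step rests on the claim that $I_1(\phi)\subseteq I_{n-1}(\phi)$ for $n\ge 2$; this is false for $n\ge 3$, because the containments among ideals of minors run the other way, $I_{n-1}(\phi)\subseteq \cdots \subseteq I_1(\phi)$ (an entry of $\phi$ is in general not in the ideal of its $2\times 2$ minors, say). So even if your row estimate were granted, you would only have placed the Koszul cycles in $I_1(\phi)\cdot K_{t}$, which is much weaker than the assertion. Moreover the row estimate itself is not established: the operators $\partial/\partial y_j$ with $y_j=x_j^{a_j}$ are the divided derivatives of Discussion~\ref{nabla}, and neither $I_1(\phi)$ nor $\m^s$ is stable under them (for instance $\frac{\partial\ }{\partial x^{a}}(x^{s})=\lfloor s/a\rfloor x^{s-a}$ need not lie in $I_1(\phi)$); in addition, the formula of Theorem~\ref{closed_formula2} differentiates the graded components $\alpha^i_{\nu\mu,\{\cdot\},m}\,\underline{y}^{\{\!\{\cdot\}\!\}}$ of the entries, and passing to graded components already destroys membership in $I_1(\phi)$ or $I_{n-1}(\phi)$. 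Order bookkeeping of the kind you sketch can only control powers of $\m$, never the ideal $I_{n-1}(\phi)$.

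The missing idea is the cofactor identity: the derivative of an $n\times n$ minor with respect to a matrix \emph{entry} is $\pm$ an $(n-1)\times(n-1)$ minor, and to exploit it one must treat the entries of $\phi$ as independent variables. This is how the paper argues: it passes to $S=R[\![\{y_{ij}\}]\!]$ and $J=I_n(Y)$ with $Y$ generic, observes that $(\underline{x}^{\underline{a}},\underline{y}-\underline{\phi})$ and $(\underline{x}^{\underline{a}},\underline{y})$ generate the same $S$-ideal because $I_1(\phi)\subset\m^s\subset(\underline{x}^{\underline{a}})$ --- this is precisely where $\sum a_i\le s+d-1$ is used --- and then, via Discussion~\ref{staircase}, realizes every cycle of ${\mathcal Z}_t(\underline{x}^{\underline{a}};R/I)$ as the image of a cycle of ${\mathcal Z}_t(\underline{x}^{\underline{a}},\underline{y};S/J)$. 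For the latter only the outermost step of Theorem~\ref{closed_formula} is needed: the cycles lie in $S\,\w{\nabla}_{t-1}\bigl(I_n(Y)\cdot K_{t-1}\bigr)$, and since $\partial/\partial y_{ij}$ carries $I_n(Y)$ into $I_{n-1}(Y)$ while $\partial/\partial x_i^{a_i}$ is $k[\![\underline{y}]\!]$-linear, this is contained in $I_{n-1}(Y)\cdot K_t$, which maps onto $I_{n-1}(\phi)\cdot K_t(\underline{x}^{\underline{a}};R/I)$. Your direct computation over $R$ with the specialized resolution never has access to this cofactor mechanism, and without some substitute for it the approach cannot be repaired.
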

\begin{proof}
We adopt the notation introduced in the previous proof. Notice that $\underline{x}^{\underline a}=x_1^{a_1}, \ldots, x_d^{a_d}$ 
form a regular sequence on $S/J$ and so do the entries $\underline{y} -\underline{\phi}$ of the matrix $Y-\phi$. Thus  the Koszul 
complexes $K_{\bullet}(\underline{x}^{\underline a};S/J)$ and $K_{\bullet}(\underline{y} -\underline{\phi};S/J)$ are acyclic. Since 
the second complex resolves $R/I$, Discussion~\ref{staircase} shows that the natural map 
\[K_{\bullet}(\underline{x}^{\underline a};S/J) \otimes_{S/J} K_{\bullet}(\underline{y} -\underline{\phi};S/J) \lto K_{\bullet}(\underline{x}^{\underline a};S/J) 
\otimes_{S/J} R/I \cong K_{\bullet}(\underline{x}^{\underline a}; R/I) 
\]
induces a surjection at the level of cycles. On the other hand, we have the following isomorphisms of complexes
\[K_{\bullet}(\underline{x}^{\underline a};S/J) \otimes_{S/J} K_{\bullet}(\underline{y} -\underline{\phi};S/J)  \cong K_{\bullet}(\underline{x}^{\underline a}, 
\underline{y} -\underline{\phi};S/J) \cong K_{\bullet}(\underline{x}^{\underline a}, \underline{y} ;S/J) \, ,\]
where the first isomorphism follows from the definition of the Koszul complex. The second isomorphism uses the fact that the sequences 
$\underline{x}^{\underline a}, \underline{y} -\underline{\phi}$ and $\underline{x}^{\underline a}, \underline{y} $ minimally generate the 
same ideal in $S$ because $I_1(\underline{\phi}) \subset \m^s \subset (\underline{x}^{\underline{a}})$. 

We conclude that is suffices to show that 
\[
{\mathcal Z}_{t}(\underline{x}^{\underline a}, \underline{y} ;S/J) \subset I_{n-1}(Y)\cdot K_{t}(\underline{x}^{\underline a}, \underline{y} ;S/J)
\]
for every $t>0$. To this end we apply Theorem~\ref{closed_formula} to the ring $S=k[\![\underline{x}, \underline{y}]\!]$, the $S$-module 
$S/J$ with minimal free resolution $(F_{\bullet}, \varphi_{\bullet})$, and the Koszul complex $K_{\bullet}=K_{\bullet}( \underline{x}^{\underline a}, \underline{y} ;S/J)$. 
As observed in the previous proof, the condition $I_1(\varphi_t) \subset (\underline{y})$ is satisfied. Hence Theorem~\ref{closed_formula} yields the inclusion 
\[ 
{\mathcal Z}_{t}(\underline{x}^{\underline a}, \underline{y} ;S/J) \subset S \, [(\mbox{\rm id}_{W_{0}} \otimes_k \w{\nabla}_{t-1}) \circ
(\varphi_{1} \otimes_S \mbox{\rm id}_{K_{t-1}})] (W_1 \otimes K_{t-1}) \, .
\]
The $S$-module  on the right hand side is contained in  $S \,  \w{\nabla}_{t-1}(I_n(Y)\cdot K_{t-1})$. Finally, notice that $\frac{\partial}{\partial y_{ij}}(I_n(Y)) \subset I_{n-1}(Y)$ 
and that $\frac{\partial}{\partial x_{i}^{a_{i}}}(I_n(Y)) \subset I_{n}(Y)$ because the map $\frac{\partial}{\partial x_{i}^{a_{i}}}$ is $k[\![\underline{y}]\!]$-linear. We conclude that
\[S \, \w{\nabla}_{t-1}(I_n(Y)\cdot K_{t-1}) \subset I_{n-1}(Y)\cdot K_t \, ,
\]
as required. 
\end{proof}

%\bs
%\begin{Remark}{\rm Remark about pfaffians and minors of symmetric matrices. The only differences are we need to use the Jacobi identity to show that the derivatives of pfaffians and minors of symmetric matrices are contained in the lower order pfaffians and in the lower order minors of symmetric matrices. We would need also to change the references for perfections etc.. }
%\end{Remark}

\bs

Theorem~\ref{determinatal}  above is sharp, as can be seen by taking $R$ to be a power series ring over a field, $\phi$
a matrix with linear entries, and $n=\ell$. In this case $I \colon {\mathfrak m}= {\mathfrak m}^n \colon {\mathfrak m} 
= {\mathfrak m}^{n-1} \not\subset I_{n}({\phi})$.

\bs

We now turn to perfect ideals of height two. For this it will be convenient  to collect  some general facts of a homological nature.

%For this it will be convenient  to rework Proposition~\ref{basic}, which is done in the next discussion. 

\begin{Proposition}\label{basic2}
Let $R$ be a Noetherian local ring, $M$ a finite $R$-module, $N=R/J$ with $J$ a perfect $R$-ideal of 
grade $g$, and write $-^*={\rm Hom}_R( \tratto, R)$, 
$-^{\vee} = {\rm Ext}_R^g(\tratto, R)$.
\begin{itemize}

\item[(a)]
If $M$ is perfect of grade $g$, then there is a natural isomorphism \, ${\rm Hom}_R(N, M)  
\cong {\rm Hom}_R(M^{\vee}, N^{\vee})$ given by $u \mapsto u^{\vee}$.

\item[(b)]
If $M$ is perfect of grade $g$ and $\pi: M^{\vee} \tha N \otimes_R M^{\vee}$ 
is the natural projection, then the map $\pi^{\vee}$ is naturally identified with the inclusion map \,
$0 :_M J \hookrightarrow M$.

\item[(c)]
Assume $M$ has a resolution $(F_{\bullet},\varphi_{\bullet})$ of length $g$ by finite free $R$-modules. 
If $I_1(\varphi_g) \subset J$,  then there are 
natural isomorphisms 
\[
\quad \quad \quad  N \otimes_R M^{\vee} \cong 
N \otimes_R F^*_g \quad \mbox{and}  \quad {\rm Hom}_R(M^{\vee}, N^{\vee}) \cong {\rm Hom}_R(F^*_g,N^{\vee})\, .  
\]
\end{itemize}
\end{Proposition}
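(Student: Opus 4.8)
The plan is to prove the three parts of Proposition~\ref{basic2} in order, reusing the mechanism already established in Proposition~\ref{basic}. For part (a), the starting point is the natural isomorphism $0 :_M J \cong {\rm Hom}_R(N, M)$ together with the fact that, since $J$ is perfect of grade $g$, the functor $-^\vee = {\rm Ext}_R^g(\tratto, R)$ restricted to the category of perfect $R$-modules of grade $g$ is exact and is a duality (i.e. $M^{\vee\vee} \cong M$ naturally, and likewise for $N$). Granting this, I would apply $-^\vee$ to an $R$-homomorphism $u\colon N \to M$; because both source and target are perfect of grade $g$, $u^\vee\colon M^\vee \to N^\vee$ is again an $R$-homomorphism, and the assignment $u \mapsto u^\vee$ is $R$-linear with inverse $v \mapsto v^\vee$ (using the biduality). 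This gives the desired natural isomorphism ${\rm Hom}_R(N,M) \cong {\rm Hom}_R(M^\vee, N^\vee)$. The one point needing care is that $u^\vee$ as constructed is a priori the induced map on $\operatorname{Ext}^g$; one checks from the standard description via free resolutions that it coincides with the honest dual, which is where exactness of $-^\vee$ on this subcategory is used.

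For part (b), the projection $\pi\colon M^\vee \tha N \otimes_R M^\vee = M^\vee/JM^\vee$ is exactly the cokernel of the multiplication maps by elements of $J$; dualizing, $\pi^\vee$ is the kernel side. Concretely, I would trace through: by part (a) applied with roles as stated, $\operatorname{Hom}_R(N \otimes_R M^\vee, \text{--})$-type reasoning identifies $(N\otimes_R M^\vee)^\vee$ with $0 :_{M^{\vee\vee}} J \cong 0:_M J$, and under the biduality $M^{\vee\vee}\cong M$ the map $\pi^\vee$ becomes precisely the inclusion $0 :_M J \hookrightarrow M$. Equivalently, one uses that applying $-^\vee$ to the exact sequence $0 \to JM^\vee \to M^\vee \to N\otimes_R M^\vee \to 0$ yields $0 \to (N\otimes_R M^\vee)^\vee \to M^{\vee\vee} \to (JM^\vee)^\vee$, and identifies the submodule $(N\otimes_R M^\vee)^\vee$ of $M^{\vee\vee}\cong M$ with $0:_M J$ by comparing with part (a). Naturality is automatic from naturality of all the isomorphisms involved.

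For part (c), I would mimic the proof of Proposition~\ref{basic}(b) verbatim. Since $M$ has a finite free resolution $(F_\bullet, \varphi_\bullet)$ of length $g$, the dual complex $F_\bullet^*$ computes $M^\vee = \operatorname{Ext}^g_R(M,R)$ at the top, and $M^\vee$ is presented by $\varphi_g^*$ and $\varphi_{g+1}^*$ in the sense that $M^\vee \cong \operatorname{coker}(\varphi_g^*\colon F_{g-1}^* \to F_g^*)$. Tensoring with $N$ and using the hypothesis $I_1(\varphi_g)\subset J$ — so that $N \otimes_R \varphi_g^* = 0$ — gives $N \otimes_R M^\vee \cong N \otimes_R F_g^*$. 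Applying $\operatorname{Hom}_R(-, N^\vee)$ to the presentation and again invoking $I_1(\varphi_g)\subset J$ (which annihilates $N^\vee$, hence kills the map induced by $\varphi_g^*$) yields $\operatorname{Hom}_R(M^\vee, N^\vee) \cong \operatorname{Hom}_R(F_g^*, N^\vee)$.

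The main obstacle I anticipate is part (b): making the identification of $\pi^\vee$ with the inclusion $0:_M J \hookrightarrow M$ genuinely \emph{natural} and checking that the biduality $M \cong M^{\vee\vee}$ intertwines everything correctly, rather than just producing an abstract isomorphism of modules. The cleanest route is probably to set up all identifications at the level of the complexes $F_\bullet$ and $F_\bullet^*$ and their tensor/Hom products with a resolution $G_\bullet$ of $N$, exactly as in the staircase reasoning of Proposition~\ref{basic}, so that naturality is built in from the outset; parts (a) and (c) should then fall out as comparatively routine bookkeeping.
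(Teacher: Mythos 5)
Parts (a) and (c) of your proposal are fine and essentially coincide with the paper's argument: (a) is just functoriality of $-^{\vee}$ plus biduality on perfect modules of grade $g$, and (c) follows from $M^{\vee}\cong {\rm coker}\,\varphi_g^*$ together with the fact that the entries of $\varphi_g^*$ lie in $J$, which annihilates both $N$ and $N^{\vee}$. The problem is part (b), which you yourself flag as the main obstacle but never actually close. Both routes you sketch stop exactly at the point that carries the content. In the first, "$\operatorname{Hom}_R(N\otimes_R M^{\vee},\text{--})$-type reasoning identifies $(N\otimes_R M^{\vee})^{\vee}$ with $0:_{M^{\vee\vee}}J$" is not a consequence of part (a): part (a) gives an abstract isomorphism ${\rm Hom}_R(N,M)\cong{\rm Hom}_R(M^{\vee},N^{\vee})$, not an identification of the specific map $\pi^{\vee}$ with the inclusion. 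In the second, dualizing $0\to JM^{\vee}\to M^{\vee}\to N\otimes_R M^{\vee}\to 0$ with $-^{\vee}={\rm Ext}^g_R(\tratto,R)$ is not left exact for free; you need ${\rm Ext}^{g-1}_R(JM^{\vee},R)=0$, which does hold (since ${\rm ann}\,M\subseteq{\rm ann}\,JM^{\vee}$ forces ${\rm grade}\,JM^{\vee}\geq g$) but is not noted, and even granting it, the claim that the kernel of $M^{\vee\vee}\to(JM^{\vee})^{\vee}$ corresponds to $0:_MJ$ under the biduality $M\cong M^{\vee\vee}$ is precisely the statement to be proved, not a bookkeeping step.

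The device that makes this work in the paper is to trade ${\rm Ext}^g_R(\tratto,R)$ for an honest ${\rm Hom}$: choose an ideal $\a\subset{\rm ann}\,M$ generated by an $R$-regular sequence of length $g$, so that on modules annihilated by $\a$ one has a natural identification ${\rm Ext}^g_R(\tratto,R)\cong{\rm Hom}_R(\tratto,R/\a)$. Writing $p:R\tha R/J$ for the projection, one has $\pi=p\otimes_RM^{\vee}$, and then at the level of \emph{maps}
\[
(p\otimes_RM^{\vee})^{\vee}={\rm Hom}_R(p\otimes_RM^{\vee},R/\a)={\rm Hom}_R\bigl(p,{\rm Hom}_R(M^{\vee},R/\a)\bigr)={\rm Hom}_R(p,M^{\vee\vee})={\rm Hom}_R(p,M),
\]
using Hom-tensor adjointness applied to $p$ and the perfection of $M$ in the last step; and ${\rm Hom}_R(p,M):{\rm Hom}_R(R/J,M)\to{\rm Hom}_R(R,M)$ is visibly the inclusion $0:_MJ\hookrightarrow M$. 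This chain delivers naturality and the concrete identification of $\pi^{\vee}$ in one stroke, which is exactly what your sketch defers to "staircase reasoning" without carrying out. So as written your proposal has a genuine gap in (b); it becomes a complete proof once you insert this (or an equivalent) identification.
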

\begin{proof}

As $-^{\vee}$ is an additive  contravariant functor, we have a linear map  ${\rm Hom}_R(N, M)  
\lto {\rm Hom}_R(M^{\vee}, N^{\vee})$ sending $u$ to $u^{\vee}$. 
The latter is an isomorphism since $-^{\vee\vee} \cong {\rm id}$ 
on the category of perfect $R$-modules of grade $g$. This proves part (a). 

We prove $($b$)$. Let $p : R  \tha R/J$ be the natural projection and notice that $\pi=p \otimes_R M^{\vee}$. 
We show that $(p \otimes_R M^{\vee})^{\vee} $ can be naturally identified with ${\rm Hom}_R (p, M)$.  
Let $\a \subset {\rm ann} \, M$ be an ideal generated by an $R$-regular sequence of length $g$. 
There are natural identifications of maps
\begin{eqnarray*}
(p \otimes_R M^{\vee})^{\vee} &=& {\rm Ext}^g_R(p \otimes_R M^{\vee}, R) \\
&=&  {\rm Hom}_R(p \otimes_R M^{\vee}, R/\a)\\
&=&{\rm Hom}_R(p, {\rm Hom}_R(M^{\vee}, R/\a))\\
&=& {\rm Hom}_R(p, {\rm  Ext}^g_R(M^{\vee}, R))\\
&=&{\rm Hom}_R(p,M^{\vee\vee})\\
&=&{\rm Hom}_R(p,M)\, ,
\end{eqnarray*}
where the last equality uses the assumption that $M$ is perfect of grade $g$.
Finally notice that 
\[
{\rm Hom}_R(p,M): {\rm Hom}_R(R/J,M) \lto {\rm Hom}_R(R,M)
\] 
can be  identified with the inclusion  map \ $0 :_M J \hookrightarrow M$. 

As for part (c), notice that $ M^{\vee}\cong {\rm coker}\, \varphi_g^*$. Hence the containment 
 $I_1(\varphi_g) \subset J={\rm ann}(N)$ implies that  $N \otimes_R M^{\vee} \cong N \otimes_R F^*_g$, 
which is the first isomorphism in part (c).
Now Hom-tensor adjointness gives the second isomorphism 
${\rm Hom}_R(M^{\vee}, N^{\vee}) \cong {\rm Hom}_R(F^*_g,N^{\vee})$ because $N=R/J$.  
\end{proof}

\smallskip

\begin{Corollary}\label{ht2} In addition to the assumptions of \ \/{\rm Proposition~\ref{basic2}} assume that $M$ is perfect of grade $g$. Let $(F_{\bullet},\varphi_{\bullet})$  and $G_{\bullet}$ be resolutions of  $M$ and $N$ of length $g$ by finite free $R$-modules. Notice that  $F^*_{\bullet}[-g]$ and  $ G^*_{\bullet}[-g]$ 
are resolutions of $M^{\vee}$ and $N^{\vee}$ by finite free $R$-modules. 

 \begin{itemize} 

\item[(a)] Given a linear map $v: M^{\vee} \longrightarrow N^{\vee}$, lift $v$ to a morphism of complexes   $ \widetilde{v}_{\bullet} : F^*_{\bullet} \longrightarrow G^*_{\bullet} \, , $ dualize to obtain
$\widetilde{v}^*_{\bullet} : G_{\bullet} \longrightarrow F_{\bullet} \, , $ and consider $H_0(\widetilde{v}^*_{\bullet}) : N=R/J \longrightarrow M$. One has 
\[ 0:_M J = \{ H_0(\widetilde{v}^*_{\bullet})(1+J)  \mid v \in  {\rm Hom}_R(M^{\vee}, N^{\vee})  \}\, .
\]

\item[(b)] Assume that $I_1(\varphi_g) \subset J$. Let  
\[ w: M^{\vee}  \tha N \otimes_R F_g^* 
\] be the composition of the epimorphism $\pi$ in   \ \/{\rm Proposition~\ref{basic2}({b})} with  the first isomorphism in \ \/{\rm Proposition~\ref{basic2}({c})} and lift $w$ to a morphism of 
complexes \[
\widetilde{w}_{\bullet}: F^*_{\bullet} \lto G_{\bullet}[g] \otimes_R F_g^*\, .
\] The mapping cone $C(\widetilde{w}^*_{\bullet} )$ is a free $R$-resolution of $M/(0 :_M J)$.
\end{itemize}
\end{Corollary}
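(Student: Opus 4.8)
The plan is to deduce both parts from the duality $-^{\vee\vee}\cong {\rm id}$ on perfect $R$-modules of grade $g$, together with the identifications of Proposition~\ref{basic2} and two standard homological facts: if $P$ is perfect of grade $g$ with a length-$g$ free resolution $Q_{\bullet}$, then $Q^*_{\bullet}[-g]$ resolves $P^{\vee}$ and, for a map of such modules, a lift to free resolutions dualizes to a lift of the $\vee$-dual map; and the mapping cone of a lift of a monomorphism between free resolutions is a free resolution of the cokernel. I will also use the tautological identification ${\rm Hom}_R(R/J,M)=0:_M J$ via $u\mapsto u(1+J)$.

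For part~(a), Proposition~\ref{basic2}(a) says that $u\mapsto u^{\vee}$ is a bijection ${\rm Hom}_R(N,M)\longrightarrow {\rm Hom}_R(M^{\vee},N^{\vee})$. Fix $v\in {\rm Hom}_R(M^{\vee},N^{\vee})$ and write $v=u^{\vee}$. Since $F^*_{\bullet}[-g]$ and $G^*_{\bullet}[-g]$ resolve $M^{\vee}$ and $N^{\vee}$, the given morphism $\widetilde{v}_{\bullet}\colon F^*_{\bullet}\to G^*_{\bullet}$ is a lift of $v$; dualizing it, and using $F^{**}_i=F_i$ and $G^{**}_i=G_i$, yields $\widetilde{v}^*_{\bullet}\colon G_{\bullet}\to F_{\bullet}$, a lift of $v^{\vee}=u^{\vee\vee}$ to the resolutions $G_{\bullet}\to N$ and $F_{\bullet}\to M$. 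Under the natural identifications $N^{\vee\vee}\cong N$ and $M^{\vee\vee}\cong M$ one obtains $H_0(\widetilde{v}^*_{\bullet})=u$, hence $H_0(\widetilde{v}^*_{\bullet})(1+J)=u(1+J)$. As $v$ runs over ${\rm Hom}_R(M^{\vee},N^{\vee})$ the element $u$ runs over ${\rm Hom}_R(N,M)$, so $u(1+J)$ runs over $0:_M J$, which is the claimed description.

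For part~(b), the key point is that $w^{\vee}$ is exactly the inclusion $0:_M J\hookrightarrow M$. Write $w=\iota'\circ\pi$, where $\pi\colon M^{\vee}\twoheadrightarrow N\otimes_R M^{\vee}$ is the projection of Proposition~\ref{basic2}(b) and $\iota'\colon N\otimes_R M^{\vee}\cong N\otimes_R F^*_g$ is the isomorphism of Proposition~\ref{basic2}(c) (this is where the hypothesis $I_1(\varphi_g)\subset J$ is used). Then $w^{\vee}=\pi^{\vee}\circ(\iota')^{\vee}$; by Proposition~\ref{basic2}(b) the map $\pi^{\vee}$ is the inclusion $0:_M J\hookrightarrow M$, while $(\iota')^{\vee}$ is an isomorphism $(N\otimes_R F^*_g)^{\vee}\cong 0:_M J$. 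Since $N\otimes_R F^*_g$ is perfect of grade $g$ and is resolved by the free complex $G_{\bullet}[g]\otimes_R F^*_g$, its dual $(N\otimes_R F^*_g)^{\vee}\cong 0:_M J$ is resolved by the dual complex; thus dualizing the given lift $\widetilde{w}_{\bullet}$ of $w$ produces $\widetilde{w}^*_{\bullet}$, a morphism between free resolutions lifting the inclusion $0:_M J\hookrightarrow M$ (the target being the resolution $F_{\bullet}$ of $M=M^{\vee\vee}$). By the long exact homology sequence of a mapping cone, $C(\widetilde{w}^*_{\bullet})$ is a complex of free $R$-modules whose homology is concentrated in degree $0$ and equals ${\rm coker}(0:_M J\hookrightarrow M)=M/(0:_M J)$, which is the assertion.

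The step I expect to require the most care is the bookkeeping of homological shifts: verifying that ``lift, apply $-^*$, reindex'' genuinely returns a morphism of free resolutions representing ${\rm Ext}^g_R(-,R)$ applied to the original map, well defined up to homotopy, and that in part~(b) the dual of $G_{\bullet}[g]\otimes_R F^*_g$ is the correct resolution of $0:_M J$. This is part of the standard duality theory for perfect modules of grade $g$ (already implicit in Proposition~\ref{basic2}); granting it, parts~(a) and~(b) are the short arguments above, with no further computation.
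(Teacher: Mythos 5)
Your proposal is correct and follows essentially the same route as the paper: part (a) is the composition of the isomorphisms ${\rm Hom}_R(M^{\vee},N^{\vee})\cong{\rm Hom}_R(N,M)\cong 0:_M J$ from Propositions~\ref{basic2}(a) and~\ref{basic}(a) together with the standard fact that $H_0(\widetilde{v}^*_{\bullet})=v^{\vee}$, and part (b) identifies $H_0(\widetilde{w}^*_{\bullet})=w^{\vee}$ with the inclusion $0:_M J\hookrightarrow M$ via Proposition~\ref{basic2}(b),(c) and concludes by the mapping cone. The only difference is expository: you spell out the factorization $w^{\vee}=\pi^{\vee}\circ(\iota')^{\vee}$ and the long exact sequence of the cone, while the paper is terser about the same shift/biduality bookkeeping.
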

\begin{proof} From   \/{\rm Propositions~\ref{basic2}(a)} and \/{\rm ~\ref{basic}(a)} we have isomorphisms 
\[  
{\rm Hom}_R(M^{\vee}, N^{\vee}) \stackrel{\simeq} {\longrightarrow}{\rm Hom}_R(N, M)  \stackrel{\simeq} {\longrightarrow} 0 :_M J \, ,
\]
where the first map sends $v$ to $v^{\vee}=H_0(\widetilde{v}^*_{\bullet})$ and the second map sends $u$ to $u(1+J)$. This proves part (a).

%which we are making explicit now. 
%Let $G_{\bullet}$ be a resolution of $N$ of length $g$ by finite free $R$-modules. Notice that  $F^*_{\bullet}[-g]$ and  $ G^*_{\bullet}[-g]$ 
%are resolutions of $M^{\vee}$ and $N^{\vee}$ by finite free $R$-modules. 
%Let $v$ be an arbitrary element of $ {\rm Hom}_R(M^{\vee}, N^{\vee})$.  Lift $v$ to a morphism of complexes 
%$\widetilde{v}_{\bullet} : F^*_{\bullet} \longrightarrow G^*_{\bullet}$, dualize to obtain
%$\widetilde{v}^*_{\bullet} : G_{\bullet} \longrightarrow F_{\bullet}$, and consider $H_0(\widetilde{v}^*_{\bullet}) : R/J \longrightarrow M$. 
%One has  $H_0(\widetilde{v}^*_{\bullet})=v^{\vee},$ and the element 
%$H_0(\widetilde{v}^*_{\bullet})(1+J) \in M$ is the image of $v$ under the natural isomorphism ${\rm Hom}_R({M}^{\vee}, N^{\vee}) \cong 0:_M J$. 

% In addition to the assumptions of Proposition~\ref{basic} suppose that $M$ is perfect of grade $g$ and 
%$I_1(\varphi_g) \subset J$. 
To show part (b), notice that 
\[ \widetilde{w}^*_{\bullet} : G^*_{\bullet}[-g] \otimes_R F_g \longrightarrow F_{\bullet}\, ,
\]
%Parts (c) and (d) of the proposition yield a natural epimorphism 
%\[ 
%w: M^{\vee}  \tha R/J \otimes_R F_g^* 
%\]
%and identify $w^{\vee}$ with the inclusion map $\ 0 :_M J \hookrightarrow M$. 
where  $G^*_{\bullet}[-g] \otimes_R F_g$ and  $F_{\bullet}$ are acyclic complexes of finite free $R$-modules. 
%Lift $w$ to a morphism of complexes 
%\[
%\widetilde{w}_{\bullet}: F^*_{\bullet} \lto G_{\bullet}[g] \otimes_R F_g^*\, .
%\]
%Dualizing $\widetilde{w}_{\bullet}$ we obtain  a morphism 
%\[ \widetilde{w}^*_{\bullet} : G^*_{\bullet}[-g] \otimes_R F_g \longrightarrow F_{\bullet}
%\]
Moreover,  $H_0(\widetilde{w}^*_{\bullet})=w^{\vee}$ and  the latter can be identified 
with the inclusion map  $\ 0 :_M J \hookrightarrow M$ according to Proposition~\ref{basic2}(b).
It follows that the mapping cone $C(\widetilde{w}^*_{\bullet} )$ is a free $R$-resolution of $M/(0 :_M J)$.
\end{proof}

\smallskip
\begin{Theorem}\label{height2}
Let $(R,\m)$ be a two-dimensional regular local ring with regular system of parameters $x,y$, and let $I$ be an ${\mathfrak m}$-primary ideal 
minimally presented by a $n \times (n-1)$ matrix $\phi$ with entries in $\m^s$. 

\begin{itemize}
\item[(a)] For $1 \le i \le n-1$ and $1\le a \le s$ write the $i^{th}$ column $\phi_i$ of $\phi$ in the form $x^{s+1-a}\eta+y^{a}\xi$ and let $\Delta_{ia}$ 
be the determinant of the $n \times n$ matrix obtained from $\phi$ by replacing $\phi_i$ with the two columns $\eta$ and $\xi$. One has 
\[ 
I : \m^s =I + (\Delta_{ia} \mid 1\le i \le n-1, 1\le a\le s).
\]
\item[(b)]  The ideal $I:\m^s$ is minimally presented by the $[(n-1)(s+1)+1] \times (n-1)(s+1)$ matrix 
\[
\psi = \left[
\begin{array}{ccccc}
&&&& \\
&& B && \\ 
&&&&\\ \hline
&&&&\\
&& \chi && \\
&&&&
\end{array}
\right].
\]
The $n \times (n-1)(s+1)$ matrix $B$ is obtained from $\phi$ by replacing each column $\phi_i$  with the $s+1$ columns $\phi_{i0}, \ldots, \phi_{is}$ 
defined by the equation $\phi_i= \sum_{j=0}^s x^{s-j}y^{j}\phi_{ij}$. The $(n-1)s \times (n-1)(s+1)$ matrix $\chi$ is the direct sum of  $n-1$ 
copies of the $s \times (s+1)$ matrix
\[ 
\left[ \begin{array}{rrrrr}
-y & x & & &  \\
& -y & x & &  \\
& & \ddots& \ddots &  \\
& & & -y & x\\
\end{array} \right].
\]
\end{itemize}
\end{Theorem}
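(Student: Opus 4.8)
The plan is to extract part (a) from Proposition~\ref{decomposition} together with an elementary determinant manipulation, and to obtain part (b) from the mapping cone of Corollary~\ref{ht2}(b) applied with $N=R/\m^{s}$; the latter construction will in fact reprove (a) as well.

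\emph{Part (a).} By the Hilbert--Burch theorem, $F_{\bullet}\colon 0\to R^{n-1}\xrightarrow{\phi}R^{n}\xrightarrow{\varphi_1}R$ is a minimal free $R$-resolution of $R/I$, with $\varphi_1$ the row of signed maximal minors $\delta_1,\dots,\delta_n$ of $\phi$; since $d=2$, $\varphi_d=\phi$ has entries in $\m^{s}$, so $\mathrm{o}(I_1(\varphi_d))\ge s$ and Proposition~\ref{decomposition} gives $I:\m^{s}=\sum_{a=1}^{s}\bigl(I:(x^{s+1-a},y^{a})\bigr)$, the weight-$(s+1)$ exponent vectors with positive entries being exactly $(s+1-a,a)$. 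Fix $a$, put $p=s+1-a$, and write $\phi_i=x^{p}\eta+y^{a}\xi$ (possible since the entries of $\phi_i$ lie in $\m^{s}\subset(x^{p},y^{a})$). Scaling the $\eta$-column by $x^{p}$ and using $x^{p}\eta=\phi_i-y^{a}\xi$ shows $x^{p}\Delta_{ia}=\pm\det[\phi_1,\dots,\phi_i,\dots,\phi_{n-1},\xi]$, which lies in $I_{n-1}(\phi)=I$ by cofactor expansion along the last column; symmetrically $y^{a}\Delta_{ia}\in I$, so $\Delta_{ia}\in I:(x^{p},y^{a})\subset I:\m^{s}$. (Replacing the decomposition changes $(\eta,\xi)$ by $(\eta-y^{a}\rho,\xi+x^{p}\rho)$ for some $\rho\in R^{n}$, since $x^{p},y^{a}$ is a regular sequence, and changes $\Delta_{ia}$ only by an element of $I$, so $\Delta_{ia}$ is well defined mod $I$.) For the reverse inclusion it is enough, for each fixed $a$, that $\Delta_{1a},\dots,\Delta_{n-1,a}$ generate $(I:(x^{p},y^{a}))/I$; by Proposition~\ref{basic} applied to $N=R/(x^{p},y^{a})$, which is Gorenstein so $N^{\vee}\cong N$, this module is free of rank $n-1$ over $R/(x^{p},y^{a})$, hence needs exactly $n-1$ generators, and one matches the $\Delta_{ia}$ with the explicit minimal generating set of Corollary~\ref{corollary2} specialized to $d=2$, $a_1=p$, $a_2=a$, up to units.

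\emph{Part (b).} Since $R/(I:\m^{s})$ is Artinian over the two-dimensional regular ring $R$, it is perfect of grade two and has a Hilbert--Burch presentation of size $N\times(N-1)$, where $N=n+(n-1)s=(n-1)(s+1)+1$ by (a), already the size claimed for $\psi$. Apply Corollary~\ref{ht2}(b) with $M=R/I$ and $N=R/\m^{s}$, resolving the latter by $0\to R^{s}\xrightarrow{\chi_0}R^{s+1}\xrightarrow{(x^{s},x^{s-1}y,\dots,y^{s})}R$ with $\chi_0$ the transpose of the bidiagonal matrix displayed in (b); the hypothesis $I_1(\varphi_2)=I_1(\phi)\subset\m^{s}$ holds. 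The mapping cone $C(\widetilde{w}^{*}_{\bullet})$ of $\widetilde{w}^{*}_{\bullet}\colon G^{*}_{\bullet}[-2]\otimes_R F_2\to F_{\bullet}$ is then a free resolution of $R/(I:\m^{s})$ of length three; its top term $G_0^{*}\otimes F_2\cong F_2$ maps by the identity onto the $F_2$-summand of the next term (because $I_1(\phi)\subset\m^{s}$ makes $M^{\vee}=\mathrm{coker}\,\phi^{*}\twoheadrightarrow N\otimes F_2^{*}$ the canonical quotient $M^{\vee}\to M^{\vee}/\m^{s}M^{\vee}$, so $w$ may be lifted with the identity in degree zero), so that summand cancels and the trimmed complex $0\to G_1^{*}\otimes F_2\xrightarrow{\psi}(G_2^{*}\otimes F_2)\oplus F_1\to R\to 0$ is (after checking its entries lie in $\m$) the minimal resolution. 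In block form $\psi=\left[\begin{smallmatrix}B\\ \chi\end{smallmatrix}\right]$: the component into $F_1$ is the degree-one part $\widetilde{w}^{*}_{1}$ of the dual comparison map, which computing $\widetilde{w}_{\bullet}$ shows equals $B$, namely $\phi$ with each column $\phi_i$ replaced by $\phi_{i0},\dots,\phi_{is}$ determined by $\phi_i=\sum_{j=0}^{s}x^{s-j}y^{j}\phi_{ij}$; and the component into $G_2^{*}\otimes F_2$ is $\pm(\chi_0^{*}\otimes\mathrm{id}_{F_2})=\chi$, the direct sum of $n-1$ copies of the bidiagonal matrix. The map $C_1\to C_0$ meanwhile has components $\varphi_1$, recovering $\delta_1,\dots,\delta_n$, and $\widetilde{w}^{*}_{0}$, recovering the $\Delta_{ia}$, which reproves (a). Alternatively one avoids the lift: verify directly that the columns of $\left[\begin{smallmatrix}B\\ \chi\end{smallmatrix}\right]$ are relations among $\delta_1,\dots,\delta_n,\Delta_{11},\dots,\Delta_{n-1,s}$ --- the $(i,j)$ column reads $\sum_k\phi_{ij,k}\,\delta_k+x\,\Delta_{ij}-y\,\Delta_{i,j+1}=0$ with $\Delta_{i0}=\Delta_{i,s+1}=0$, which follows from the multilinear identity $x\,\Delta_{ij}-y\,\Delta_{i,j+1}=\pm\det[\phi\mid\phi_{ij}]=\pm\sum_k\phi_{ij,k}\,\delta_k$ obtained by substituting $\phi_i=\sum_{j'}x^{s-j'}y^{j'}\phi_{ij'}$ into the determinant --- then check $\mathrm{grade}\,I_{N-1}(\psi)\ge2$ and, by cofactor expansion using the bidiagonal shape of $\chi$, that the maximal minors of $\psi$ recover $\delta_1,\dots,\delta_n$ and $\pm\Delta_{ia}$; Buchsbaum--Eisenbud and Hilbert--Burch then give $I_{N-1}(\psi)=I:\m^{s}$ with presentation $\psi$.

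I expect the main obstacle in (a) to be passing from the abstract structure of $(I:(x^{p},y^{a}))/I$ as a free $R/(x^{p},y^{a})$-module to the statement that the concrete elements $\Delta_{1a},\dots,\Delta_{n-1,a}$ generate it --- exactly the point where Corollary~\ref{corollary2}, or a leading-form analysis, is needed. In (b) it is the determinantal bookkeeping: either computing $\widetilde{w}_{\bullet}$ and carrying out the cancellation in the cone with correct signs, or, in the elementary route, establishing $x\,\Delta_{ij}-y\,\Delta_{i,j+1}=\pm\sum_k\phi_{ij,k}\,\delta_k$ and evaluating all maximal minors of $\left[\begin{smallmatrix}B\\ \chi\end{smallmatrix}\right]$, where the interaction between the bidiagonal syzygies of $\m^{s}$ and the decomposition $\phi_i=\sum_j x^{s-j}y^{j}\phi_{ij}$ must be tracked with care.
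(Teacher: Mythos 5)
The gap is in part (a), in the reverse inclusion. Your reduction via Proposition~\ref{decomposition} to the colons $I:(x^{s+1-a},y^{a})$ and your column manipulation showing $x^{s+1-a}\Delta_{ia}\in I$, $y^{a}\Delta_{ia}\in I$, hence $\Delta_{ia}\in I:(x^{s+1-a},y^{a})$, are fine. But the containment $I:(x^{s+1-a},y^{a})\subseteq I+(\Delta_{ia}\mid 1\le i\le n-1)$ is never actually proved. Knowing from Proposition~\ref{basic} that $(I:(x^{s+1-a},y^{a}))/I$ is free of rank $n-1$ over $R/(x^{s+1-a},y^{a})$ only tells you its minimal number of generators is $n-1$; it does not tell you that your particular $n-1$ elements generate it (the elements $x\Delta_{1a},\dots,x\Delta_{n-1,a}$ also lie in the colon and generate a proper submodule). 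The proposed fix --- ``match the $\Delta_{ia}$ with the minimal generators of Corollary~\ref{corollary2} up to units'' --- is precisely the missing computation, and it is not a routine check: the generators there are sums of Jacobian determinants with rational coefficients, and Corollary~\ref{corollary2} is only available in characteristic zero, whereas Theorem~\ref{height2} carries no characteristic hypothesis. The paper closes this step characteristic-freely via Corollary~\ref{ht2}(a): for each $i$ the generator $v$ of ${\rm Hom}_R(F_2^*,\omega_{R/J})$ given by the $i$-th projection followed by $K_2^*\twoheadrightarrow\omega_{R/J}$ is lifted with $\widetilde v_{-1}=[\,\eta\mid\xi\,]^*$, the dual mapping cone splits to a complex whose presentation matrix is $[\,\phi'\mid\eta\mid\xi\,]$, and the Hilbert--Burch theorem then forces $I_1(\widetilde v_0^*)=R\,\Delta_{ia}$; since Corollary~\ref{ht2}(a) says these $I_1(\widetilde v_0^*)$ together with $I$ give the whole colon, generation comes for free. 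That mechanism is what your argument lacks.

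For part (b), your main route is essentially the paper's proof (Corollary~\ref{ht2}(b) with $J={\mathfrak m}^s$, the resolution of $R/{\mathfrak m}^s$ with $\partial_1=[\,x^s\ x^{s-1}y\ \cdots\ y^s\,]$, the lift $\widetilde w_{-2}={\rm id}$, $\widetilde w_{-1}=B^*$, $\partial_2\otimes F_2^*=\chi^*$, then the trimmed mapping cone), and that part is sound. However, neither of your two ways of ``reproving (a)'' inside (b) repairs the gap: the claim that $\widetilde w_0^*$ recovers the $\Delta_{ia}$ is not automatic (the lift in homological degree zero is not unique, and identifying the resulting entries with the $\Delta_{ia}$ is exactly the content of the Remark following the theorem, proved there by the determinant identity for the bidiagonal block), while the alternative Buchsbaum--Eisenbud/Hilbert--Burch route only shows that $\psi$ presents $R/(I+(\Delta_{ia}))$, so identifying this with $R/(I:{\mathfrak m}^s)$ again presupposes (a). As written, your proposal establishes $I+(\Delta_{ia})\subseteq I:{\mathfrak m}^s$ and the shape of the presentation, but not the equality asserted in (a).
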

\begin{proof}
We prove part (a). The $R$-module $R/I$ has a minimal free $R$-resolution $(F_{\bullet}, \varphi_{\bullet})$ of length 2. After a choice of bases 
we may assume that $\varphi_2=\phi$. As $I_1(\varphi_2) \subset \m^s$, Proposition~\ref{decomposition} shows that 
\[
I :  {\mathfrak m}^s = \sum_{1\le a \le s} I : (x^{s+1-a}, y^{a})\, .
\]
We claim that $I : (x^{s+1-a}, y^{a})= I + (\Delta_{ia} \mid 1\le i \le n-1)$ for every $1\le a\le s$. 

For this we wish to apply Corollary~\ref{ht2}(a) with $M=R/I$, $J=(x^{s+1-a}, y^{a})$, and $G_{\bullet}=K_{\bullet}$ the Koszul complex of 
$-y^{a},x^{s+1-a}$ with its natural bases. The second isomorphism of Proposition~\ref{basic2}({c}) gives 
$ {\rm Hom}_R(M^{\vee}, N^{\vee}) \cong {\rm Hom}_R(F^*_2, \omega_{R/J})$.  Let $v\in {\rm Hom}_R(F^*_2, \omega_{R/J})$ be the 
projection $\pi_i: F^*_2 \tha R=K^*_2$ onto the $i^{th}$ component followed by the epimorphism 
$K^*_2 \tha \omega_{R/J}$. We lift $v$ to a morphism of complexes $\widetilde{v}_{\bullet} : F^*_{\bullet} \longrightarrow K^*_{\bullet}$ 
with $\w{v}_{-2}=\pi_i$. The $R$-module $ {\rm Hom}_R(M^{\vee}, N^{\vee}) \cong {\rm Hom}_R(F^*_2, \omega_{R/J})$ is generated 
by the elements $v$ as $i$ varies in the range $1\le i \le n-1$. Hence according to Corollary~\ref{ht2}(a) the ideal $I : (x^{s+1-a}, y^{a})$ is 
generated by $I$ together with the ideals $I_1(\w{v}^*_0)$. Thus it suffices to prove that  $I_1(\w{v}^*_0)$ is generated by $\Delta_{ia}$. 

In the diagram 

\begin{center}
\begin{pspicture}(-3.8,-0.4)(20,2.2)
\psset{xunit=.5cm, yunit=.5cm}

\rput(2,3.75){$0$} \rput(5,3.75){$F_0^*$}    \rput(10,3.75){$F_1^*$}   \rput(15,3.75){$F_2^*$}

\rput(2,0){$0$} \rput(5,0){$K_0^*$}   \rput(10,0){$K_1^*$} \rput(15,0){$K_2^*$}

\psline[linestyle=solid,linewidth=0.55pt]{->}(5,3)(5,0.75)
\psline[linestyle=solid,linewidth=0.55pt]{->}(10,3)(10,0.75)
\psline[linestyle=solid,linewidth=0.55pt]{->}(15,3)(15,0.75)

\psline[linestyle=solid,linewidth=0.55pt]{->}(2.5,0)(4.3,0)
\psline[linestyle=solid,linewidth=0.55pt]{->}(5.7,0)(9.3,0)
\psline[linestyle=solid,linewidth=0.55pt]{->}(10.7,0)(14.3,0)

\psline[linestyle=solid,linewidth=0.55pt]{->}(2.5,3.75)(4.3,3.75)
\psline[linestyle=solid,linewidth=0.55pt]{->}(5.7,3.75)(9.3,3.75)
\psline[linestyle=solid,linewidth=0.55pt]{->}(10.7,3.75)(14.3,3.75)

\rput(7.5,4.2){${}^{\varphi_1^*}$} \rput(12.5,4.2){${}^{\varphi_2^*\,=\,\phi^*}$}

\rput(7.5,0.45){${}^{\partial_1^*}$} \rput(12.5,0.45){${}^{\partial_2^*}$}

\rput(5.6,1.9){${}^{\widetilde{v}_0}$} \rput(10.8,1.9){${}^{\widetilde{v}_{-1}}$} \rput(16.3,1.9){${}^{\widetilde{v}_{-2}= \pi_i}$}

\end{pspicture}
\end{center}

\noindent
we may choose $\w{v}_{-1}=[\ \eta\  | \ \xi \ ]^*$ because $[x^{s+1-a} \ y^a]\cdot [\ \eta\  | \ \xi \ ]^*=\partial_2^* \  \w{v}_{-1}$ is the $i^{th}$ 
row of $\phi^*$, which equals $\pi_i \ \varphi_2^*$. Since $v$ is surjective, the mapping cone $C(\w{v}_{\bullet})$ is acyclic, and 
hence so is  $C(\widetilde{v}_{\bullet} )^*$ or equivalently $C(\widetilde{v}^*_{\bullet} )$. After splitting off a summand, the latter resolution has the form 

\begin{center}
\begin{pspicture}(-3,1.5)(20,2.7)
\psset{xunit=.5cm, yunit=.5cm}

\rput(.5,3.75){$0$} \rput(5.5,3.75){$ (F_2/\pi_i^*(K_2)) \oplus K_1$}    \rput(14.9,3.75){$F_1 \oplus K_0$}   \rput(20.3,3.75){$F_0$\, ,} 

\psline[linestyle=solid,linewidth=0.55pt]{->}(1,3.75)(2,3.75)
\psline[linestyle=solid,linewidth=0.55pt]{->}(8.9,3.75)(13.2,3.75)
\psline[linestyle=solid,linewidth=0.55pt]{->}(16.6,3.75)(19.4,3.75)

\psline[linestyle=solid,linewidth=0.55pt](9.1,4.2)(9.1,5.9)
\psline[linestyle=solid,linewidth=0.55pt](9.1,4.2)(9.25,4.2)
\psline[linestyle=solid,linewidth=0.55pt](9.1,5.9)(9.25,5.9)

\psline[linestyle=solid,linewidth=0.55pt](12.9,4.2)(12.9,5.9)
\psline[linestyle=solid,linewidth=0.55pt](12.75,4.2)(12.9,4.2)
\psline[linestyle=solid,linewidth=0.55pt](12.75,5.9)(12.9,5.9)

\psline[linestyle=solid,linewidth=0.55pt](10.8,4.2)(10.8,5.9)
\psline[linestyle=solid,linewidth=0.55pt](11.8,5)(11.8,5.9)
\psline[linestyle=solid,linewidth=0.55pt](9.25,5)(12.75,5)

\rput(10.1,5.5){${}^{\phi'}$} \rput(11.3,5.4){${}^{\eta}$} \rput(12.3,5.4){${}^{\xi}$}
\rput(10,4.4){${}^{0}$} \rput(11.8,4.4){${}^{-\partial_1}$}

\rput(17.45,4.4){${}^{\varphi_1}$} \rput(18.6,4.5){${}^{\widetilde{v}_0^*}$} 

\psline[linestyle=solid,linewidth=0.55pt](17.95,4.2)(17.95,4.8)

\psline[linestyle=solid,linewidth=0.55pt](16.8,4.2)(16.8,4.8)
\psline[linestyle=solid,linewidth=0.55pt](16.8,4.2)(16.9,4.2)
\psline[linestyle=solid,linewidth=0.55pt](16.8,4.8)(16.9,4.8)

\psline[linestyle=solid,linewidth=0.55pt](19.1,4.2)(19.1,4.8)
\psline[linestyle=solid,linewidth=0.55pt](19.1,4.2)(19,4.2)
\psline[linestyle=solid,linewidth=0.55pt](19.1,4.8)(19,4.8) 

\end{pspicture} 
\end{center} 

\noindent
where $\phi'$ is obtained from $\phi$ by deleting the $i^{th}$ column. Since $I_1(\varphi_1)=I$ has height 2, the Hilbert-Burch 
Theorem now shows that $I_1(\w{v}^*_0)= I_n([\ \phi' \ | \ \eta\  | \ \xi \ ])=R \Delta_{ia}$, as desired.

To prove part (b) we wish to apply Corollary~\ref{ht2}(b) with $M=R/I$, $J=\m^s$, and $G_{\bullet}$ the resolution that, after a 
choice of bases, has differentials
\[ 
\partial_1 = [ \ x^s \ \ x^{s-1}y \ \ \ldots \ \ y^s]
\qquad
\text{and}
\qquad
\partial_2 = \left[
\begin{array}{rrrr}
y  &   & &   \\
-x &y    & &   \\
& -x  & \ddots &  \\
&   & \ddots &  y \\
& &   &     -x\\
\end{array} \right] \, . 
\]
Lift the natural epimorphism 
\[ 
w: M^{\vee}  \tha R/\m^s \otimes_R F^*_2
\]

\smallskip
\noindent
to a morphism of complexes $ \widetilde{w}_{\bullet} : F_{\bullet}^* \lto G_{\bullet}[-2] \otimes_R F_2^*$ so that $\w{w}_{-2}={\rm id}$. In the diagram

\begin{center}
\begin{pspicture}(-3.8,-0.4)(20,2.2)
\psset{xunit=.5cm, yunit=.5cm}

\rput(2,3.75){$0$} \rput(5,3.75){$F_0^*$}    \rput(10,3.75){$F_1^*$}   \rput(15,3.75){$F_2^*$}

\rput(2,0){$0$} \rput(5,0){$G_2\otimes F_2^*$}   \rput(10,0){$G_1 \otimes F_2^*$} \rput(15,0){$G_0 \otimes F_2^*$}

\psline[linestyle=solid,linewidth=0.55pt]{->}(5,3)(5,0.75)
\psline[linestyle=solid,linewidth=0.55pt]{->}(10,3)(10,0.75)
\psline[linestyle=solid,linewidth=0.55pt](15.1,3)(15.1,0.75)
\psline[linestyle=solid,linewidth=0.55pt](14.9,3)(14.9,0.75)

\psline[linestyle=solid,linewidth=0.55pt]{->}(2.5,0)(3.3,0)
\psline[linestyle=solid,linewidth=0.55pt]{->}(6.6,0)(8.4,0)
\psline[linestyle=solid,linewidth=0.55pt]{->}(11.6,0)(13.4,0)

\psline[linestyle=solid,linewidth=0.55pt]{->}(2.5,3.75)(4.3,3.75)
\psline[linestyle=solid,linewidth=0.55pt]{->}(5.7,3.75)(9.3,3.75)
\psline[linestyle=solid,linewidth=0.55pt]{->}(10.7,3.75)(14.3,3.75)

\rput(7.5,4.2){${}^{\varphi_1^*}$} \rput(12.5,4.2){${}^{\varphi_2^*\,=\,\phi^*}$}

\rput(7.5,0.45){${}^{\partial_2\otimes F_2^* }$} \rput(12.5,0.45){${}^{\partial_1 \otimes F_2^*}$}

\rput(5.6,1.9){${}^{\widetilde{w}_0}$} \rput(10.8,1.9){${}^{\widetilde{w}_{-1}}$} \rput(16.4,1.9){${}^{\widetilde{w}_{-2}= {\rm id}}$}

\end{pspicture}
\end{center}

\noindent
we can choose $\w{w}_{-1}=B^*$ by the definition of $B$. Notice that $\partial_2 \otimes F_2^*=\chi^*$. 
The result now follows since $C(\widetilde{w}^*_{\bullet} )$ is a free $R$-resolution of $R/(I:_R\m^s)$ according to 
Corollary~\ref{ht2}(b). 
\end{proof}

\smallskip
\begin{Remark}{\rm In the setting of Theorem~\ref{height2} a minimal free $R$-resolution of $R/(I:\m^s)$ is 

\begin{center}
\begin{pspicture}(-4.1,-0.4)(20,-0.4)
\psset{xunit=.5cm, yunit=.5cm}

\rput(0.5,0){$0$} \rput(4,0.1){$R^{(n-1)(s+1)}$}    \rput(10.4,0.1){$R^{(n-1)(s+1)+1}$}   \rput(15.3,0){$R\ ,$}
%\rput(19.8,0){$R/(I:{\mathfrak m}^s)$}  \rput(23.4,0){$0\ ,$}

\psline[linestyle=solid,linewidth=0.55pt]{->}(1,0)(2,0)
\psline[linestyle=solid,linewidth=0.55pt]{->}(6,0)(8,0)
\psline[linestyle=solid,linewidth=0.55pt]{->}(12.8,0)(14.8,0)
%\psline[linestyle=solid,linewidth=0.55pt]{->}(15.9,0)(17.9,0)
%\psline[linestyle=solid,linewidth=0.55pt]{->}(21.8,0)(22.8,0)

\rput(7,0.35){${}^{\psi_2}$} \rput(13.8,0.35){${}^{\psi_1}$}

\end{pspicture}
\end{center}

\noindent where $\psi_2=\psi$ and the $l^{th}$ entry of $\psi_1$ is the signed $l^{th}$ maximal minor of $\psi$. If $l \le n$ 
the $l^{th}$ maximal minor of $\psi$ is the $l^{th}$ maximal minor of $\phi$; if $l\ge n+1$ write $l=n+(i-1)s+a$ for 
$1\le i \le n-1$, $1\le a \le s$, and  the $l^{th}$ maximal minor of $\psi$ is $\Delta_{ia}$, where  
$\eta :=\displaystyle\sum_{j=0}^{a-1} x^{a-1-j}y^j \phi_{ij}$ and  $\xi:=\displaystyle\sum_{j=a}^{s}x^{s-j}y^{j-a}\phi_{ij}$ 
are used in the definition of  $\Delta_{ia}$. 

For the proof one uses Theorem~\ref{height2}(b), the Hilbert-Burch Theorem, and the following elementary fact about 
determinants that can be shown 
by induction on $r$ and expansion along the last row:

If $\left[
\begin{array}{ccc}
\ \varepsilon & \vline & \mu  \\ \hline
\ 0 & \vline & \delta  \\
\end{array}
\right]$ 
is a square matrix, where $\mu$ has columns $\mu_0, \ldots, \mu_{r}$ and $\delta=\left[ \begin{array}{cccc}
-y & x & &  \\
&  \ddots& \ddots   & \\
 & &  -y & x \\
\end{array} \right]$, then $\ {\rm det} \left[
\begin{array}{ccc}
\ \varepsilon & \vline & \mu  \\ 
 \hline
\ 0 & \vline & \delta  \\
\end{array}
\right]= {\rm det}\, \left[
\ \varepsilon \  \ \vline  \ \displaystyle\sum_{j=0}^{r} x^{r-j}y^j \mu_j\right]$. }
\end{Remark}

\bs

\ms

\noindent{\bf Acknowledgments.} 
Part of this work was done at the Mathematical Sciences Research Institute (MSRI) in Berkeley, where the authors spent  time in connection 
with the 2012-13 thematic year on Commutative Algebra, supported by NSF grant 0932078000. The authors would like to thank MSRI for its partial support and hospitality. 
The authors would also like thank David Eisenbud for helpful discussions on the material of this paper.

 \bs


\begin{thebibliography}{99}

\bibitem{ALJ}{B. Ang\'eniol and M. Lejeune-Jalabert, Calcul diff\'erentiel et classes caract\'eristiques en g\'eom\'etrie alg\'ebrique,  
Travaux en Cours {\bf  38}, Hermann, Paris, 1989.}

\bibitem{ABIM}{L. Avramov, R.-O. Buchweitz, S. Iyengar, C. Miller, Homology of perfect complexes, Adv. Math. {\bf 223} (2010), 1731-1781.}

\bibitem{BH}{W. Bruns and J. Herzog, {\it Cohen-Macaulay Rings}, Cambridge Studies in Advanced Mathematics {\bf 39}, Cambridge
University Press, Cambridge, 1998.}

\bibitem{B}{L. Burch, On ideals of finite homological dimension in local rings, Math. Proc. Camb. Phil. Soc.
{\bf 74} (1968), 941-948.}

\bibitem{CHPU}{A. Corso, C. Huneke, C. Polini, and B. Ulrich, Iterated socles and integral dependence via resolutions, in preparation.}

\bibitem{CHV}{A. Corso, C. Huneke and W.V. Vasconcelos, On the integral closure of ideals,
manuscripta math. {\bf 95} (1998), 331-347.}

\bibitem{CP1}{A. Corso and C. Polini, Links of prime ideals and their Rees algebra,
J. Algebra {\bf 178} (1995), 224-238.}

\bibitem{CP2}{A. Corso and C. Polini, Reduction number of links of irreducible
varieties, J. Pure Appl. Algebra {\bf 121} (1997), 29-43.}

\bibitem{CPV}{A. Corso, C. Polini and W.V. Vasconcelos, Links of prime ideals,
Math. Proc. Camb. Phil. Soc. {\bf 115} (1994), 431-436.}

\bibitem{G}{S. Goto, Integral closedness of complete intersection
ideals, J. Algebra {\bf 108} (1987), 151-160.}

\bibitem{He}{J. Herzog, Canonical Koszul cycles, in {\it International Seminar on
Algebra and its Applications} (M\'exico City, 1991), Aportaciones Mat. Notas Investigaci\'on {\bf 6}, Soc. Mat. Mexicana,
M\'exico, 1992,  pp. 33-41.}

\bibitem{HH}{J. Herzog and C. Huneke, Ordinary and symbolic powers are Golod, Adv. Math. {\bf 246} (2013),  89-99.}

\bibitem{HE}{M. Hochster and J. Eagon, Cohen-Macaulay rings, invariant theory, and the generic 
perfection of determinantal loci, Amer. J. Math. {\bf 93} (1971), 1020-1058. }

% \bibitem{LL}{R. Lazarsfeld and K. Lee,  Local syzygies of multiplier ideals, Invent. Math. {\bf 167} (2007), 409-418.}

\bibitem{Iarrobino_1986}{A. Iarrobino, Tangent cone of a Gorenstein singularity, Proceedings of the conference on algebraic geometry 
(Berlin, 1985), 163-176, Teubner-Texte Math., {\bf 92}, Teubner, Leipzig, 1986.}

\bibitem{Iarrobino_1994}{A. Iarrobino, Associated graded algebra of a Gorenstein Artin algebra, Mem. Amer. Math. Soc. {\bf 107} 
(1994), no. 514, viii+115 pp.}

\bibitem{IE}{A. Iarrobino and  J. Emsalem, Some zero-dimensional generic singularities; finite algebras having small tangent space, Compositio Math. {\bf 36} (1978), 145-188.}

\bibitem{LeePark}{W. Lee and E. Park, Classification of Betti numbers of varieties of almost minimal degree, J. Korean Math. Soc. {\bf 48} (2011), 1001-1015.}

\bibitem{L}{J. Lipman, Adjoints of ideals in regular local rings, Math. Res. Lett. {\bf 1} (1994), 739-755.}

\bibitem{PU}{C. Polini and B. Ulrich, Linkage and reduction numbers, Math. Ann.
{\bf 310} (1998), 631-651.}

\bibitem{SS}{G. Scheja and U. Storch, \"Uber Spurfunktionen bei vollst\"andigen Durchschnitten, J. reine angew. Math.
{\bf 278} (1975), 174-190.}  

\bibitem{SH}{I. Swanson and C. Huneke, {\it Integral Closure of Ideals, Rings, and Modules}, London 
Mathematical Society Lecture Notes Series {\bf 336}, Cambridge University Press, Cambridge, 2006.}

%\bibitem{teter}{W. Teter, Rings which are a factor of a Gorenstein ring by its socle, Invent. Math. {\bf 23} (1974), 153-162.}

\bibitem{V}{W.V. Vasconcelos, {\it Integral Closure. Rees Algebras, Multiplicities, Algorithms}, Springer Monographs in 
Mathematics, Springer-Verlag, Berlin, 2005.}

\bibitem{W}{H.-J. Wang, Links of symbolic powers of prime ideals, Math Z. {\bf 256} (2007), 749-756.}

\bibitem{WY}{K. Watanabe and K. Yoshida, A variant of Wang's theorem, J. Algebra {\bf 369} (2012), 129-145.}

 \end{thebibliography}
\end{document}